\newtheorem{theorem}{Theorem}
\newtheorem{corollary}[theorem]{Corollary}
\newtheorem{lemma}[theorem]{Lemma}
\newtheorem{proposition}[theorem]{Proposition}
\newtheorem{definition}[theorem]{Definition}
\newtheorem{remark}[theorem]{Remark}
\newtheorem*{remark*}{Remark}
\newcommand{\NN}{\mathbb{N}}
\newcommand{\RR}{\mathbb{R}}
\renewcommand{\P}{\mathbb{P}}
\newcommand{\EE}{\mathbb{E}}
\newcommand{\diff}{\mathrm{d}}
\newcommand{\indic}[1]{\mathbbm{1}_{\{#1\}}}
\DeclarePairedDelimiter\abs{\lvert}{\rvert}
\renewcommand{\d}{\mathrm{d}}
\DeclareMathOperator*{\argmin}{arg\,min}
\DeclareMathOperator{\Card}{Card}
\newcommand{\Out}{\mathrm{out}}
\newcommand{\In}{\mathrm{in}}
\title{General Epidemiological Models: Law of Large Numbers and Contact Tracing}
\newlength{\affilskip}
\author[1]{Jean-Jil Duchamps}
\author[2]{Félix Foutel-Rodier}
\author[3]{Emmanuel Schertzer}
\affil[1]{
    Université de Franche-Comté, CNRS, LmB, F-25000 Besançon, France
    \vspace{\affilskip}
}
\affil[2]{
    Department of Statistics, University of Oxford, UK
    \vspace{\affilskip}
}
\affil[3]{
    Faculty of Mathematics, University of Vienna, \authorcr
    Oskar-Morgenstern-Platz 1, 1090 Wien, Austria
    \vspace{\affilskip}
}
\date{}
\begin{document}

\maketitle

\begin{abstract}
We study a class of individual-based, fixed-population size epidemic models under general assumptions, e.g., heterogeneous contact rates encapsulating changes in behavior and/or enforcement of control measures. We show that the large-population dynamics are deterministic and relate to the Kermack--McKendrick PDE. Our assumptions are minimalistic in the sense that the only important requirement is that the basic reproduction number of the epidemic $R_0$ be finite, and allow us to tackle both Markovian and non-Markovian dynamics. The novelty of our approach is to study the ``infection graph'' of the population. We show local convergence of this random graph to a Poisson (Galton--Watson) marked tree, recovering Markovian backward-in-time dynamics in the limit as we trace back the transmission chain leading to a focal infection. This effectively models the process of contact tracing in a large population. It is expressed in terms of the Doob $h$-transform of a certain renewal process encoding the time of infection along the chain. Our results provide a mathematical formulation relating a fundamental epidemiological quantity, the generation time distribution, to the successive time of infections along this transmission chain.
\end{abstract}

\tableofcontents

\section{Introduction}

\subsection{General individual-based epidemic model}

In the present article, we study an extension of the general epidemiological framework
introduced in \cite{foutelrodier2020individualbased} to model the
COVID-19 epidemic. Let us briefly recall the main features of this model.

At time $t=0$, we consider a population made of susceptible individuals, that have never
encountered the disease, and infected individuals. Each infected
individual is supposed to belong to one \emph{compartment}, that models
the stage of the disease of this individual. Classical examples of compartments
are the exposed compartment ($E$), where the individual is
infected but not yet infectious, the infectious compartment ($I$), and the
recovered compartment ($R$), once the individual has become immunized. In the case of the COVID-19 epidemic, it might be relevant to
add a hospitalized ($H)$ and an intensive care unit ($U$) compartment, as
monitoring the number of individuals in these states is typically important for
policy-making. See \Cref{fig:model_covid} for an example of compartmental model
used for the COVID-19 epidemic. We denote by $\mathcal{S}$ the set of all
compartments. For the sake of simplicity, we will also assume that $\mathcal{S}$ is finite.

We encode the compartment to which individual $x$ belongs as a stochastic
process $(X_x(a);\, a \ge 0)$ valued in $\mathcal{S}$, that we call the
\emph{life-cycle process}. The random variable $X_x(a)$ gives the compartment
to which $x$ belongs at \emph{age of infection} $a$, that is, $a$ unit of time
after its infection. Moreover, individual $x$ is endowed with a point measure
$\mathcal{P}_x$ on $\RR_+$ that we call the \emph{infection point
process}, which is assumed to be simple.
The atoms of $\mathcal{P}_x$ encode the age at which $x$ makes infectious contacts
with the rest of the population.
We think of the pair $(\mathcal{P}_x, X_x)$ as describing the course of the
infection of individual $x$. We make the fundamental assumption that the pairs
$(\mathcal{P}_x, X_x)$ are i.i.d.\ for distinct individuals in the population.

\begin{figure}[t]
    \centering
    \includegraphics[width=.95\textwidth]{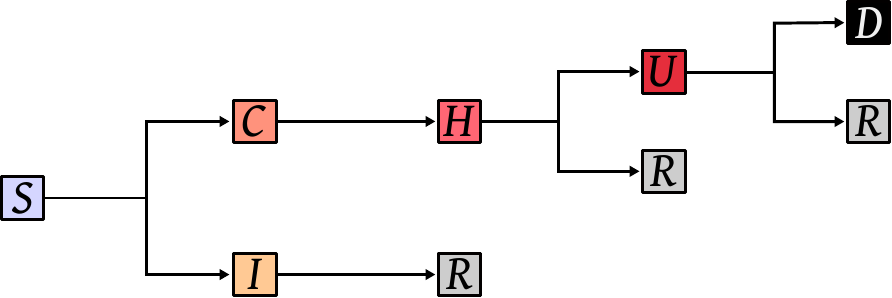}
    \caption{An example of compartmental model for the COVID-19 epidemic.
        The compartments are: $S$, susceptible; $I$, mildly infected;
        $C$, severely infected; $H$, hospitalized; $U$, admitted to
        intensive care unit; $R$, recovered; and $D$, dead.}
    \label{fig:model_covid}
\end{figure}

In \cite{foutelrodier2020individualbased} we assumed that susceptibles are
in excess, and that any infectious contact leads to a new infection.
The resulting population is then distributed as a Crump-Mode-Jagers (CMJ)
process. In the current work, we consider an extension of this model that takes
into account the saturation due to the finite pool of susceptibles in the
population. More precisely, we consider a population of finite fixed size $N$.
Each infectious contact is made with an individual uniformly chosen in this
population, and it results in a new infection only if the targeted individual
is susceptible. Finally, we model the impact of control measures, such as
school closure, or national lockdown, with a \emph{contact rate function}
$(c(t);\, t \ge 0)$. This contact rate is such that an infection
occuring at time $t$ is only effective with probability $c(t) \in [0, 1]$.
With probability $1-c(t)$, the infection is removed. A formal description of
this model is provided in \Cref{SS:descriptionModel}.

\subsection{Law of large numbers for the age structure} 

A standard way to study compartmental models is to consider the dynamics of
the number of individuals in each compartment. If the underlying
probabilistic model is Markovian, this typically gives rise to systems of
ODEs of the SIR type in the large population size limit, see
\cite{britton2019} for a recent account. Here, we will not keep track
of the count of individuals in the various compartments, but we will rather be
interested in the age structure of the population. Our main result is a law
of large number for the age structure of population, which is the equivalent of
Theorem~7 of \cite{foutelrodier2020individualbased} for our non-linear
extension of the model.

We anticipate the notation of \Cref{SS:descriptionModel} and denote the
empirical measure of ages and compartments in the population at time $t$
as
\[
    \forall i \in \mathcal{S},\quad \mu^N_t(\d a, \{i\}) 
    = \sum_{x=1}^N \indic{\sigma^N_x \le t} \indic{X_x(t-\sigma^N_x)=i}\delta_{t-\sigma^N_x}(\d a),
\]
where $\sigma^N_x$ is the infection time of individual $x$, and the sum runs over all
infected individuals at time $t$. (Note that $t-\sigma^N_x$ is just the age of 
$x$ at time $t$.) The measure $\mu^N_t$ encodes the ages and compartments of
infected individuals at time $t$. Let us also denote by 
\[
    \forall t \ge 0,\, \forall i \in \mathcal{S},
    \quad Y^N_t(i) = \# \{\text{individuals in $i$ at time $t$}\}
    = \mu^N_t\big( [0, \infty), \{ i \} \big).
\]
The limiting distribution of $\mu^N_t$ will depend on the following two
quantities:

\begin{itemize}
    \item The intensity measure of the infection point process defined as 
        \begin{equation} \label{eq:intensity}
            \tau(\d a) \coloneqq \EE \bigl[ \mathcal{P}(\d a) \bigr].
        \end{equation}
        We assume that $\tau$ has a density w.r.t.\ the Lebesgue
        measure that we denote by $\tau(a)$ with a slight abuse of notation,
        and that $R_0 \coloneqq \tau([0, \infty)) < \infty$. We
        also assume that there exists $\alpha \in \RR$, called the
        \emph{Malthusian parameter} such that 
        \begin{equation} \label{eq:malthusian}
            \int_0^\infty e^{-\alpha a} \tau(a) \diff a = 1.
        \end{equation}

    \item The one-dimensional marginals of the life-cycle process,
        denoted by 
        \[
            \forall i \in \mathcal{S},\, \forall a \ge 0,\quad 
            p(a, i) \coloneqq  \P\bigl( X(a) = i \bigr).
        \]
\end{itemize}

Let $I_0\in(0,1)$. At time $t=0$, we assume that every individual is
independently infected with probability $I_0$ and that its age of
infection is chosen independently according to a probability density $g$
on $\RR_+$. In the following, we define $\mathcal{I}^N_0 \subseteq [N]$ as
the set of infected individuals at $t=0$. We will also use the notation
$S_0 = 1-I_0$.

We make the simplifying assumption that the underlying compartmental
model is acyclic: we assume that for any two compartments $i, j \in
\mathcal{S}$, if $j$ can be accessed from $i$ with positive probability,
that is, if the event that we can find $s \le t$ such that $X(s) = i$ and
$X(t) = j$ has positive probability, then $i$ cannot be accessed from
$j$. In other words, the directed graph on $\mathcal{S}$ composed of all
edges $i\to j$ such that $j$ is accessible from $i$ is a directed acyclic
graph. This assumption is not very restrictive, most natural
compartmental models enjoy this acyclic property. See
Figure~\ref{fig:model_covid}. It is only needed to reinforce the
finite-dimensional convergence to a Skorohod one in the following theorem. 

\begin{theorem} \label{thm:ageStructure}
    As $N \to \infty$, the following convergence holds in
    probability in the Skorohod topology on the space of measure-valued
    processes,
    \[
        \Big(\frac{1}{N} \mu^N_t(\d a, \{i\});\, t \ge 0\Big) 
        \longrightarrow 
        \big( n(t,a)p(a,i) \,\d a;\, t \ge 0 \big)
    \]
    
    where $(n(t,a);\, t, a \ge 0)$ is the solution to
    \begin{align} \label{eq:mckvf}
        \begin{split}
        \partial_t n(t,a) + \partial_a n(t,a) &= 0 \\
        \forall t \ge 0,\; n(t, 0) &= c(t)S(t) \int_0^\infty n(t,a) \,\tau(\d
        a) \\
        \forall a \ge 0,\; n(0, a) &= I_0 g(a)\\
        \forall t \ge 0,\; S(t) &= 1 - \int_0^\infty n(t,a) \,\d a.
        \end{split}
    \end{align}
\end{theorem}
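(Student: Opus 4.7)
The plan is to first establish tightness and a priori control on $N^{-1}\mu^N$ via a coupling with the free CMJ model of \cite{foutelrodier2020individualbased}, then identify all subsequential limits as the (unique) solution of \eqref{eq:mckvf}, and finally upgrade pointwise convergence to Skorohod convergence using the acyclic assumption. The coupling is built by equipping each individual $x$ with its iid pair $(\mathcal{P}_x,X_x)$ together with, for every atom $s$ of $\mathcal{P}_x$, an independent uniform target $U_{x,s}\in[N]$ and an independent Bernoulli mark of parameter $c(\sigma^N_x+s)$; a contact is effective iff the mark equals one and $U_{x,s}$ is susceptible at time $\sigma^N_x+s$. Dropping both conditions gives exactly the CMJ model of the prior work, which stochastically dominates $Y^N_t([0,\infty))$. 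Theorem~7 of \cite{foutelrodier2020individualbased} then provides uniform $L^1$ bounds and tightness of $(N^{-1}\mu^N)$ on $[0,T]$.

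For the identification step, I would pick a bounded test function $\phi:\RR_+\times\mathcal{S}\to\RR$ and write the evolution of $\langle \mu^N_t,\phi\rangle$ as the sum of a drift coming from the aging of already-infected individuals (governed by the law of $X$) and a jump term coming from new infections. Conditioning on $(\mathcal{P}_x,X_x)_x$ and using uniformity of the targets $U_{x,s}$ together with independence of the Bernoulli marks, the predictable intensity of new infections at time $s$ can be written as
\[
c(s)\,S^N(s-)\sum_{x\in\mathcal{I}^N_{s-}}\tau\bigl(\d(s-\sigma^N_x)\bigr)+\text{noise},
\]
where the noise measures the discrepancy between $\mathcal{P}_x$ and its intensity $\tau$. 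Crucially, because the $(\mathcal{P}_x)$ are iid, the noise summed over the $O(N)$ infected individuals has $L^2$ norm $O(\sqrt N)$ by Bienaymé, hence $O(N^{-1/2})$ after normalization. Dividing by $N$ and using the a priori tightness, any subsequential limit $\nu_t$ of $N^{-1}\mu^N_t$ satisfies the mild form of \eqref{eq:mckvf} integrated against $\phi$, and factorizes as $\nu_t(\d a,\{i\})=n(t,a)\,p(a,i)\,\d a$ because conditional on the age $a=t-\sigma^N_x$ the compartment is an independent copy of $X(a)$. Uniqueness of the mild solution is standard (Picard iteration on $S(t)\in[0,1]$ combined with Gronwall, using that $n\mapsto c(t)S(t)\int n(t,a)\tau(\d a)$ is Lipschitz on bounded sets).

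The main obstacle I foresee is upgrading pointwise-in-$t$ convergence of the marginals to Skorohod convergence on path space, which is exactly where the acyclic hypothesis is needed. Under acyclicity, each coordinate $t\mapsto Y^N_t(i)$ decomposes as the difference of two non-decreasing càdlàg processes, namely the total entries into and the total exits from compartment $i$, both uniformly bounded by $N$. Tightness of non-decreasing uniformly bounded càdlàg processes reduces by a Helly/Dini argument to tightness of the finite-dimensional marginals, which has just been established; combining this across all compartments and using the factorization above for the age marginal yields Skorohod convergence of the full measure-valued process $N^{-1}\mu^N$.
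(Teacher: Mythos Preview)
Your approach is fundamentally different from the paper's. The paper does not write any forward evolution equation for $\langle\mu^N_t,\phi\rangle$; instead it derives Theorem~\ref{thm:ageStructure} as a corollary of the convergence of the \emph{historical process} (Theorem~\ref{thm:hist-process}), obtained by a backward-in-time, graph-theoretic argument. One builds an infection graph $\mathcal{G}^N$ (a directed configuration model), proves its local weak convergence in probability to a limiting Poisson marked tree $\mathcal{T}$, and checks that the ancestral path---hence the infection time---is an almost-surely continuous functional of the local topology. The identification with~\eqref{eq:mckvf} then comes from an explicit probabilistic representation: $B(t)=S_0\,\P(\sigma^\infty\le t)$, where $\sigma^\infty$ is the length of the active geodesic in the limiting tree, solves the delay equation~\eqref{eq:delay}. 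No compensator and no second-moment bound appear anywhere.

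Your forward route has a genuine gap at the Bienaym\'e step. You write the noise as $\sum_{x\in\mathcal{I}^N_{s-}}[\mathcal{P}_x-\tau](\d(s-\sigma^N_x))$ and invoke independence of the $(\mathcal{P}_x)$. It is true that $\sigma^N_x$ is independent of $\mathcal{P}_x$ (incoming contacts determine $\sigma^N_x$ and do not involve $\mathcal{P}_x$), so each summand is centred. But the summands are \emph{not} uncorrelated: $\sigma^N_x$ depends on $\mathcal{P}_y$ for every $y$ in the transmission chain leading to $x$, so $[\mathcal{P}_x-\tau](\cdot-\sigma^N_x)$ and $[\mathcal{P}_y-\tau](\cdot-\sigma^N_y)$ are correlated through the random shift. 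A bare Bienaym\'e bound therefore does not yield $O(N)$ variance for the sum; controlling the cross terms amounts to controlling the local structure of the infection graph, which is precisely what the paper does via local weak convergence. The paper in fact remarks, in its literature discussion, that the martingale/stochastic-equation techniques used for Markovian or Cox-driven models ``could not be extended to our setting''; this correlation is the concrete obstruction. A secondary issue is that even your diagonal terms require $\mathrm{Var}\bigl(\mathcal{P}[0,T]\bigr)<\infty$, a second-moment hypothesis, whereas the theorem is stated and proved under the first-moment assumption $R_0<\infty$ alone.

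Your final paragraph, on upgrading finite-dimensional convergence to Skorohod convergence by using acyclicity to write each $Y^N_t(i)$ as a difference of nondecreasing bounded processes with continuous limits, is essentially the paper's own argument for that step.
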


This theorem is proved at the end of \Cref{SS:proofMainThm}, as a
consequence of the convergence of the so-called historical process
(\Cref{thm:hist-process}). From \Cref{thm:ageStructure} we immediately
deduce the following result.

\begin{corollary}
For any $i \in \mathcal{S}$, we have
    \begin{equation} \label{eq:limitCompartments}
        \Big(\frac{1}{N}Y^N_t(i);\, t \ge 0\Big) 
        \longrightarrow 
        \Big(\int_0^\infty n(t,a) p(a,i) \,\d a;\, t \ge 0\Big)
    \end{equation}
    in probability in the Skorohod topology.
\end{corollary}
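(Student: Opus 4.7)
The plan is to deduce this corollary from \Cref{thm:ageStructure} by the continuous mapping theorem. The key observation is that
\[
    \tfrac{1}{N} Y^N_t(i) = \Phi\Bigl(\tfrac{1}{N}\mu^N_t(\d a, \{i\})\Bigr),
    \qquad \text{where } \Phi(\nu) \coloneqq \nu\bigl([0,\infty)\bigr),
\]
so that $\frac{1}{N}Y^N_t(i)$ is the image of the measure-valued process of \Cref{thm:ageStructure} under the total-mass functional $\Phi$, applied coordinate-wise in $t$. The candidate limit $\int_0^\infty n(t,a) p(a,i)\,\d a$ is exactly $\Phi$ applied to the limit measure $n(t,a)p(a,i)\,\d a$, so the corollary reduces to showing that $\Phi$ commutes with the limit.

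On the space of finite measures on $[0,\infty)$ endowed with the weak topology, $\Phi$ is continuous: it is integration against the bounded continuous function $\mathbf{1}$. Because the state-space map is continuous, the continuous mapping theorem for the Skorohod topology transfers the convergence of \Cref{thm:ageStructure} into the desired convergence of $(\frac{1}{N} Y^N_t(i))_{t\ge 0}$ in the Skorohod topology on càdlàg real-valued processes, still in probability since the limit is deterministic.

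The only subtlety is to rule out any escape of mass to infinity in the age variable, which could break the continuity of $\Phi$ under a vague topology. This is controlled by the uniform bound $\frac{1}{N}\mu^N_t([0,\infty),\mathcal{S}) \le 1$, combined with the fact that the limit has finite total mass $\int_0^\infty n(t,a)\,\d a = 1-S(t) \le 1$, given by the last line of \eqref{eq:mckvf}. These two bounds enforce the needed tightness at every time $t$, upgrading vague to weak convergence and making $\Phi$ continuous at the limit. No genuine obstacle is anticipated; the corollary is an essentially immediate consequence of \Cref{thm:ageStructure}.
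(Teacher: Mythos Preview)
Your proposal is correct and matches the paper's own treatment: the corollary is stated there as an immediate consequence of \Cref{thm:ageStructure}, with no separate proof given. Your continuous-mapping justification (total mass is integration against the bounded continuous function $\mathbf{1}$, hence continuous for the weak topology on finite measures, and pointwise continuous maps lift to Skorohod convergence) is exactly the natural way to unpack that word ``immediate.'' The paragraph about escape of mass is unnecessary: \Cref{thm:ageStructure} is already stated for the weak topology on measures, so no upgrade from vague to weak is needed and your bound-matching argument can be dropped.
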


\begin{remark}
\Cref{thm:ageStructure} can be easily extended to weaker assumptions on
the initial condition. For instance, it is not hard to see from our proof
that \Cref{thm:ageStructure} holds true if we simply assume that
\[
    \frac{1}{N} \sum_{x =1}^N \indic{\sigma^N_x < 0}
    \delta_{-\sigma_x^N}(\diff a) \longrightarrow I_0 g(a) \diff a
\]
where the convergence holds in probability for the weak topology. 
\end{remark}

\begin{remark*}[Infinite reproduction number]
    The dynamics of the epidemic until some fixed time $t$ does not
    depend on the potential infections occurring after time $t$. In
    particular we can remove all atoms after age $t$ of the point
    processes of the individuals that are initially susceptible without
    affecting the process at time $t$. Similarly, we can remove all atoms
    after time $t$ of the point processes of the initially infected
    individuals. Anticipating the notation $\bar{\tau}$ in
    \eqref{eq:tauBar} for the intensity measure of the initially infected
    individuals, the convergence of the process in
    Theorem~\ref{thm:ageStructure} until time $t$ only requires that
    $\int_0^t \tau(a) \diff a < \infty$ and $\int_0^t \bar{\tau}(a) \diff
    a < \infty$. Thus, Theorem~\ref{thm:ageStructure} holds under the
    weaker assumption that $\tau$ is a locally finite measure, and that
    $g$ decays fast enough so that $\bar{\tau}$ is also locally finite.
    In particular one could have that $R_0 = \infty$.
\end{remark*}

This result will follow from the more general \Cref{thm:hist-process},
and is proved in \Cref{SS:proofMainThm}. The definition of a solution to
\Cref{eq:mckvf} is provided in \Cref{SS:mckvf}.
Theorem~\ref{thm:ageStructure} proves that the age structure of the
population converges to a limiting non-linear PDE of the
Kermack--McKendrick type \cite{kermack1927}. It also entails that the
number of individuals in each compartment can be recovered by integrating
the one-dimensional marginals $p(a,i)$ against the age structure. 

It is interesting to note that the limit of our model is universal.
The limiting expression in Equation~\eqref{eq:limitCompartments} does not
depend on the entire distribution of the pair $(\mathcal{P}, 
(X(a);\, a \ge 0))$, but only on:
\begin{itemize}
    \item the \emph{mean} number of secondary infections $\tau(a)$
        induced by an infected individual with age $a$;
    \item the one-dimensional marginals $p(a,i)$ of the life-cycle
        process.
\end{itemize}
These are the only individual characteristics that need to be assessed to
forecast the dynamics of the epidemic at a large scale. By further writing
$\tau = R_0 \nu$ with 
\begin{equation} \label{eq:R0}
    R_0 = \int_0^\infty \tau(\d a),\quad \nu(\d a) =
    \frac{\tau(\d a)}{R_0},
\end{equation}
we see that $\tau$ only depends on two well-known epidemiological
quantities:
\begin{itemize}
    \item the basic reproduction number, $R_0$, which the mean number
        of secondary infections induced by a single individual in an
        entirely susceptible population;
    \item the distribution of the generation time, $\nu$, which gives the
        typical time between the infection of a source individual and
        that of the recipient individual in an infection pair
        \cite{fraser2007}.
\end{itemize}
Further interesting modeling consequences of Theorem~\ref{thm:ageStructure} are
discussed in our earlier work \cite{foutelrodier2020individualbased}.

\subsection{Contact tracing: the historical process}
\label{SS:contactTracing}

We already argued that our approach allows to identify the individual
characteristics that impact the large population size dynamics. We
identified those parameters as $R_0$, the distribution of generation time
$\nu$ together with the one-dimensional marginals of the life-cycle
process. The estimation of those parameters is obviously of paramount
importance. One possible approach to estimate the generation time
distribution consists in observing the generation times backwards in time
using contact tracing \cite{ferretti_quantifying_2020,ganyani2020estimating}, 
i.e., the time between the infection time of an individual (the infectee)
and that of his/her infector (rather than the infection time of the
individuals he/she infects). In \cite{britton}, the authors addressed
this specific question in a simplified setting. More specifically, they
assumed that $c \equiv 1$ and that the susceptibles are in excess so that
our microscopic model can be approximated by a Crump--Mode--Jagers
process as in our earlier work \cite{foutelrodier2020individualbased}. 
They showed that the observation of backward generation times raises two
serious issues:
\begin{itemize} 
\item[(i)] First, observations of past infections induce a strong
    observational bias: the backward generation time distribution differs
    from the actual generation time distribution. In the supercritical
    case (i.e., when $R_0>1$), the backward generation time has density
    \begin{equation}\label{eq:backward-g} 
        \exp(-\alpha u) R_0 \nu(u)
    \end{equation} where $\alpha>0$ is the Malthusian parameter of the
    model defined in \eqref{eq:malthusian}. As a consequence,
    observations of backward infection times tend to be biased towards
    lower values.  
\item[(ii)] Infection times are difficult to observe. Instead, the onset
    of symptoms is generally observed. For this reason, the serial
    interval, which is defined as the time between symptom onsets in the
    two individuals mentioned above, is often used as a surrogate for the
    generation time. As discussed in \cite{britton}, this can induce a
    second source of significant observational bias. 
\end{itemize}
As already mentioned above, the authors in \cite{britton} address the
previous bias in the case where $c \equiv 1$ and when susceptibles are in
excess. In the present article, we will show if we (1) take into account
saturation effect (i.e, when the population is out of the branching
process regime), and (2) assume some heterogeneity in the contact rate,
then those two components of the dynamics can induce a third source of
bias. 

In order to provide some intuition of the upcoming results, consider a
newly infected individual at time $t$. Trace backward in time the chain
of infection up to time $t=0$. (The first individual along the chain is
the infector of the focal individual, the second is the infector of the
primary infector and so on.) Finally, along the chain, report the
successive times of infection, see \Cref{fig:ancestors}. When
susceptibles are in excess (branching approximation) Jagers and
Nerman~\cite{Nerman1984} showed under mild assumptions that as
$t\to\infty$, the successive time of infections are well approximated by
the values of a renewal process
\[
\mathcal{R}^{(t)}(0)=t, \ \ \mathcal{R}^{(t)}(k) = t- \sum_{i=1}^k \xi_i \mbox{  for $k\geq1$},
\]
where the $\xi_i$'s are i.i.d.\ and distributed according to \eqref{eq:backward-g}.

\begin{figure}[ht] \centering
  \includegraphics{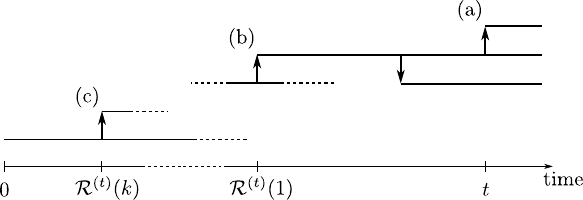}
  \caption{Chain of infection of a focal individual. Horizontal lines represent the lifetime of an infected individual, and vertical arrows represent new infections. Going backwards in time, the following events are recorded: (a) the focal individual $i_0$ is infected at time $t$; (b) its infector, individual $i_1$, is infected at time $\mathcal{R}^{(t)}(1)$; (c) going back a chain of $k$ successive infections, individual $i_k$ is infected at time $\mathcal{R}^{(t)}(k)$ by an individual that was already infected at time $0$.}
  \label{fig:ancestors}
\end{figure}

In the presence of saturation, we show that the chain of infection is
given by an $h$-transform of the renewal process $\mathcal{R}^{(t)}$.
Intuitively, the $h$-transform tends to favor infection at times where
there is a large fraction of susceptibles and  a high contact rate. When
the initial age structure of the population coincides with the
``equilibrium'' measure of the branching approximation, i.e.,
\[
    g(u) = \alpha \exp(-\alpha u),
\]
the $h$-transformed process can be reformulated in a simple manner. In
\Cref{prop:exx} we show that it is identical in law to the original
renewal process conditioned on survival assuming that at each step $k$
the process is killed with probability 
$1-c(\mathcal{R}^{(t)}(k)) S(\mathcal{R}^{(t)}(k))$.

In \Cref{SS:proofMainThm}, we introduce the historical process. Loosely
speaking, the historical process is the empirical measure reporting the
chain of infections for every individual infected at time $t=0$ or who
eventually gets infected in the future. It is constructed by reporting
the successive age of infections along the chain, but also the stages of
the life-cycle process for every ``ancestor'' along the chain, e.g. onset
of the symptoms, latency period, etc. In \Cref{thm:hist-process}, we show
that the historical process converges to a deterministic probability law.
Loosely speaking, our result shows that the chain of infections for a
finite sample of infected individuals are asymptotically independent.
Furthermore, for each sampled individual, its chain of infection is
distributed in such a way that
\begin{itemize} 
    \item the successive times of infection are expressed in terms of the
        $h$-transformed renewal process mentioned above.
    \item the life cycle of individuals along the chain is biased, and
        the bias can be expressed as a Palm modification of the original
        life cycle. This will be made more formal in \Cref{SS:limitTree}.
\end{itemize} 
Going back to the two epidemiological questions (i) and (ii), our results
decipher how the epidemiological parameters $R_0$ and the generation time
distribution $\nu$ relate (in a non-trivial way) to observables which can
be directly collected from contact-tracing data.

\subsection{A genealogical dual to the delay equation}
\label{SS:dual}

The Kermack--McKendrick equation~\eqref{eq:mckvf} can be reformulated
in terms of a non-linear delay equation. 
To ease the exposition, let us consider the case $c\equiv 1$.
In \Cref{SS:limitTree}, the general case $c \not\equiv 1$ will be exposed. 

If $(n(t,a);\, t,a\ge0)$ denotes
the solution to \Cref{eq:mckvf} with $c \equiv 1$,  let us
define the number of infections between time $0$ and $t$ as 
\begin{equation*} 
    B(t) = \int_0^t n(s,0) \,\d s = \int_0^t S(s) \int_0^\infty n(s,a)
    \,\tau(\d a)\,\d s.
\end{equation*}
Then we will derive in \Cref{SS:mckvf} that $B$ solves the following
non-linear delay equation:
\begin{equation} \label{eq:delayClassical}
    B(t) = S_0\bigg(1-\exp\bigg(-\int_0^t \tau(a) B(t-a)\,\d a
        - \int_0^\infty \int_0^t\tau(a+s)g(a) \,\d s\,\d a
        \bigg)\bigg),
\end{equation}
where we recall that $S_0 = 1 - I_0$ is the initial fraction of susceptibles.

Our proof of \Cref{thm:ageStructure} uses a genealogical approach, where
we look backwards in time at the set of potential infectors of a focal
individual.  This approach leads to a genealogical dual to the delay equation
that we think to be of independent interest. The dual is built out of the
following branching process.

Recall that $R_0$ stands for the total mass of $\tau$ and
$\nu = \tau / R_0$. 
We define the intensity
\begin{equation} \label{eq:tauBar}
  \bar{\tau}(u) = \int_0^\infty g(a)\tau(a+u)\,\d a, \qquad u\geq 0,
\end{equation}
so that the measure $\bar{\tau}(u)\,\d u$ is the intensity measure of the infection point
process of an individual with initial age distributed as $g$. Let us further
set
\begin{equation} \label{eq:R0bar}
    \bar{R}_0 = \int_0^\infty \bar{\tau}(u)\,\d u,\quad
    \bar{\nu}(\d u) = \frac{\bar{\tau}(u)\,\d u}{\bar{R}_0}.
\end{equation}
The branching process is constructed as follows. Let us assume that individuals
in the branching process are either infected $(I)$ or susceptibles $(S)$. Suppose that the
population starts from a single $(S)$ individual. Then, at each generation,
an $(S)$ individual produces:
\begin{itemize}
    \item a $\mathrm{Poisson}\big(S_0R_0\big)$ distributed number of $(S)$
        individuals;
    \item a $\mathrm{Poisson}\big((1-S_0)\bar{R}_0\big)$ distributed number of $(I)$
        individuals.
\end{itemize}
Individuals of type $(I)$ have no offspring. Draw an oriented edge from
each individual towards its parent. Assign a length independently to
each edge, such that the length of an edge originating from an $(S)$
individual is distributed as $\nu$, and that of an edge coming from a
$(I)$ individual is distributed as $\bar{\nu}$.

The previous branching process corresponds to the large population size limit
of the set of potential infectors of a fixed individual. Type $(I)$ individuals
correspond to individuals that were initially infected. Each edge corresponds
to an infectious contact in the population, and the length of that edge is the
age of the infector when this contact occurs.

The corresponding object is a rooted \emph{geometric} tree, where edges
are endowed with a length. We define the infection path at the
root as the (a.s.\ unique) geodesic connecting the set of infectious
individuals $(I)$ to the root. Finally, the time of infection
$\sigma^\infty$ is defined as the length of the geodesic. The following
result connects the distribution of $\sigma^\infty$ to the delay
equation.

\begin{proposition}
    For any $t \ge 0$, define
    \[
        B(t) = S_0 \P( \sigma^\infty \le t).
    \]
    Then $(B(t);\, t \ge 0)$ solves the delay equation~\eqref{eq:delayClassical}.
\end{proposition}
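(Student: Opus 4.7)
The plan is to compute $q(t) \coloneqq \P(\sigma^\infty > t)$ using the branching structure of the geometric tree and then translate the result via $B(t) = S_0\bigl(1 - q(t)\bigr)$. First, I would identify the offspring of the root as two independent Poisson point processes on $\RR_+$ indexed by edge length. Indeed, the number of $(S)$-children is $\mathrm{Poisson}(S_0 R_0)$ and, conditionally on this number, the edge lengths are i.i.d.\ with density $\nu = \tau/R_0$; by the coloring theorem for Poisson processes, the $(S)$-children lengths form a Poisson point process with intensity $S_0 \tau(a)\,\d a$. The $(I)$-children lengths form, independently, a Poisson point process with intensity $(1-S_0)\bar{\tau}(a)\,\d a$.

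Next, I would derive a fixed-point equation for $q$ via the branching property. The event $\{\sigma^\infty > t\}$ holds if and only if (a) no $(I)$-child of the root lies at distance $\le t$, and (b) for every $(S)$-child at distance $a \le t$, the subtree rooted at that child contains no $(I)$-descendant within distance $t-a$. Since the subtrees of distinct $(S)$-children are i.i.d.\ copies of the original process, event (b) has conditional probability $\prod_i q(t - a_i)$ given the $(S)$-children lengths $(a_i)$. Taking expectations using the Laplace functional of the two independent Poisson point processes yields
\[
q(t) = \exp\!\bigg( -(1-S_0)\!\int_0^t \bar{\tau}(a)\,\d a - S_0\!\int_0^t \tau(a)\bigl(1 - q(t-a)\bigr)\,\d a \bigg).
\]

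Finally, substituting $q(t) = 1 - B(t)/S_0$ and using the identity $S_0\bigl(1-q(t-a)\bigr) = B(t-a)$ gives
\[
1 - \frac{B(t)}{S_0} = \exp\!\bigg( -(1-S_0)\!\int_0^t \bar{\tau}(a)\,\d a - \int_0^t \tau(a)\, B(t-a)\,\d a \bigg).
\]
Unfolding the definition~\eqref{eq:tauBar} of $\bar{\tau}$ via Fubini converts the first exponent term into $\int_0^\infty\!\int_0^t g(a)\tau(a+s)\,\d s\,\d a$ (absorbing the factor $1-S_0 = I_0$ into the convention $n(0,a) = I_0 g(a)$ of the initial condition), and solving for $B(t)$ recovers precisely the delay equation~\eqref{eq:delayClassical}.

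Main obstacle: the argument reduces to a single Laplace functional computation on two independent Poisson point processes, followed by the change of variable $B = S_0(1-q)$, so there is no serious technical difficulty. The only point worth spelling out is that the fixed-point equation for $q$ admits a unique bounded solution on every compact interval $[0, T]$, which follows from a standard Gronwall argument exploiting $0 \le q \le 1$ and $\tau \in L^1([0,T])$; together with the analogous uniqueness statement for~\eqref{eq:delayClassical}, this identifies $S_0(1-q)$ with the solution $B$.
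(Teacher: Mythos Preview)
Your argument is correct and follows essentially the same approach as the paper: in the proof of \Cref{prop:dual} (which specializes to this statement for $c\equiv 1$), the paper conditions on the Poisson numbers $K,\bar K$ of $(S)$- and $(I)$-children and applies the Poisson generating function, which is exactly your Laplace-functional computation in different packaging. Your uniqueness remark is unnecessary since the proposition only asks that $B$ \emph{satisfy} \eqref{eq:delayClassical}, and the missing factor $I_0$ you noticed in \eqref{eq:delayClassical} is a typo in the paper (compare \eqref{eq:delay}).
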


In \Cref{SS:limitTree}, we will derive a similar dual for the delay
equation with $c \not\equiv 1$ --- see \Cref{prop:dual}.

\subsection{Link with literature}

\paragraph{Age-structured models in epidemiology.} 
The idea of considering an infection through its age structure dates back to at
least the pioneering work of Kermack and McKendrick~\cite{kermack1927}.
They introduced a general epidemic model where the infectiousness of
an individual can depend on its age of infection, which was formulated as
the solution to a delay equation equivalent to \eqref{eq:mckvf}. In the
same article \cite{kermack1927}, the authors noticed that if the
infectiousness and the recovery rate are assumed to be constant,
Equation~\eqref{eq:mckvf} reduces to a set of non-linear ODEs now known
as the SIR model. Even if the work of \cite{kermack1927} was primarily
devoted to the more general age-structured model, subsequent work on
epidemic modeling has mostly focused on extensions of the ODE special
case. Nonetheless, the original age-structured model is now receiving
renewed attention both in the mathematical literature
\cite{Reddingius1971, diekmann_limiting_1977, inaba17} and in
applications \cite{cori_new_2013, wallinga_how_2007, brauer_kermack_2005,
brauer_2012}.

In the probabilistic literature, it is only quite recently that it was
proved in \cite{barbour2013} that Equation~\eqref{eq:mckvf} describes the
large population size limit of a stochastic epidemic model similar to
ours. The setting of the main result in \cite{barbour2013} is slightly
different from that considered in the current work: the process is
assumed to be supercritical ($R_0 > 1$) and to start from a single
infected individual. After an appropriate time-shift so as to skip the
long initial branching phase when there are few infected individuals,
\cite{barbour2013} prove that the fraction of susceptibles in the
population converges to a limiting function $(S(t);\, t \in \RR)$. This
limit corresponds to the number of susceptible individuals in an
extension of \eqref{eq:mckvf} to the whole real line
\cite{diekmann_limiting_1977, Thieme1984, Thieme1985}. 
Although the law of large numbers considered in \cite{barbour2013} is
quite similar to our Theorem~\ref{thm:ageStructure}, let us outline some
important differences. 

From a purely technical point of view, \cite{barbour2013} work under
quite restrictive assumptions on the point process $\mathcal{P}$, see
Assumption~2 on the top of page~7. For instance the simple Markovian SEIR
model \cite{britton2019} does not fulfill these assumptions. Also, the
model in \cite{barbour2013} does not explicitly account for compartments
through $(X(a);\, a \ge 0)$, nor for the contact rate $(c(t);\, t \ge
0)$. These are two key modeling ingredients in the context of the
COVID-19 epidemic. While incorporating compartments would be a direct
extension of the proofs in \cite{barbour2013}, taking into account the
contact rate would raise the same serious technical difficulties as in
our work, since their proofs rely on a backward-in-time approach similar to
ours. Finally, the description of the chain of transmission events
leading to a focal infection, which is one of the main contributions of
our work, is not considered in \cite{barbour2013}.

\paragraph{Other age-structured models.} There exists a rich literature on
age-dependent population processes, not necessarily related to epidemic
models. Let us first mention the Crump--Mode--Jagers processes (CMJ),
also known as general branching processes \cite{jagers75, Taib1992}, from
which the formalism of our model is borrowed. In these processes, the
birth time of children can depend in a very general way on the age of the
parent, but individuals reproduce independently of each other. These
processes are good approximations of the early dynamics of an epidemic
when susceptible individuals are in excess. We have considered such an
approximation in an earlier work \cite{foutelrodier2020individualbased}
and proved a law of large numbers similar to Theorem~\ref{thm:ageStructure} 
in this context.

Further models have relaxed the assumption that individuals reproduce
independently by allowing the birth and death rates to depend on the
whole age distribution of the population \cite{tran2008, jagers2000,
jagers2011, hamza2013}. Under a large population size limit, the age
structure of these models converges to an extension of the
McKendrick--von~Foerster equation that generalizes \eqref{eq:mckvf}
\cite{tran2008, hamza2013}. Moreover, several central limit theorems have
been derived for this age structure \cite{tran06, fan2020}.
Although these models allow for
a very general dependence between births and the state of the population,
they require the age distribution to be a Markov process and our results
are not trivially implied by those in the above works. The techniques
used to study these models are also quite different from the
backward-in-time approach developed here. They require to see the age
structure as the solution to a stochastic equation driven by a Poisson
measure, or to use martingale techniques which could not be extended to
our setting.

\paragraph{Other non-Markovian epidemic models.}
Finally, there is a recent series of work that considers epidemic models
that are non-Markovian \cite{pang2020functional, pang2020multipatch,
forien2020epidemic, pang2021functional, pang2021functional2}, but not
structured by the age of infection. They derived laws of large numbers and
central limit theorems for extensions of the model considered in
\cite{pang2020functional} that can incorporate  
spatial heterogeneity \cite{pang2020multipatch}, varying or random
infectiosity \cite{forien2020epidemic, pang2021functional,
pang2021functional2}, and applied these models to the COVID-19 epidemic
in France \cite{forien2020estimating}.
The limiting equations that describe the
dynamics of the density of individuals in each compartments are systems
of so-called Volterra integral equations. These equations are tightly
linked to our PDE representation using the Kermack--McKendrick
equation~\eqref{eq:mckvf}, as is acknowledged explicitly in
\cite{pang2021functional2}, Proposition~2.1. All the models with
non-vanishing immunity that they consider (SIR, SEIR) can be formulated
within our framework. The infection point process $\mathcal{P}$ is
obtained by starting either a homogeneous Poisson point process
\cite{pang2020functional}, an inhomogeneous Poisson point process
\cite{forien2020epidemic, pang2021functional}, or an inhomogeneous
Poisson point process with random intensity \cite{pang2021functional2},
at age $a=0$ in the SIR case, or after an exposed phase in the SEIR case.
Moreover, the proof techniques they use rely heavily on a representation
of their model as the solution to a stochastic equation driven by a
Poisson measure, which does not hold in our more general setting.
Nevertheless, let us acknowledge that their techniques allow to derive
central limit theorems for their models.

\subsection{Further discussion}

Let us discuss some practical implications and limitations of our results
for epidemiological applications, as well as some avenues for future
work.

\paragraph{Impact of the initial condition.} A major limitation to the
practical interest of Theorem~\ref{thm:ageStructure} is that the age
structure of the initial population should converge to a known limit,
for which a positive fraction of individuals are infected. This means
that our result could only be applied once the epidemic has been
spreading for a long enough time, and that the initial age structure of
the population needs to be prescribed. In practice this age structure can
hardly be estimated.

In applications, the large number of individuals observed at $t=0$
results from the growth of the epidemic out of a few initial individuals.
It is thus natural to try to derive a law of large numbers similar to
Theorem~\ref{thm:ageStructure} but started from a few infected
individuals. Such a result was already derived in \cite{barbour2013}, for
$c \equiv 1$ and under some additional technical assumptions. It
was shown that the limit of the age structure then converges to an
extension of \eqref{eq:mckvf} to the whole real line $t \in \RR$. This 
extension is unique up to a shift \cite{diekmann_limiting_1977}, and does
not depend on any initial age structure, solving the above issue. We
expect that a similar result holds in our setting with $c \not\equiv 1$.
However, in this case, the solutions to \eqref{eq:mckvf} on the real line
are neither unique nor shift-invariant. It is a more delicate issue to
describe these solutions, and to understand which one of them is selected
by the initial randomness of the stochastic process. This relates to
existing work on dynamical systems perturbed by a small noise and started
near an unstable equilibrium \cite{barbour2016, baker2018}.

\paragraph{Speed of convergence and deviations.} 
Theorem~\ref{thm:ageStructure} provides a rigorous justification to the
use of deterministic age-structured epidemic models, as limits of a large
class of stochastic individual-based models. For the purpose of
applications, it would be desirable to understand quantitatively how
accurate this approximation is, that is, to derive a speed of
convergence of the stochastic model to its deterministic limit. An even
more important question for statistical applications would be to
characterize the deviation of the stochastic system from the limit.
We have derived our law of large numbers under a minimal first moment
assumption \eqref{eq:intensity} on the infection point process. We expect
that a central limit theorem similar to those obtained in
\cite{pang2020functional, fan2020, tran06} should hold under a second
moment assumption on $\mathcal{P}$. This would entail that the fraction
of individuals in the various compartments remains at a distance of order
$1/\sqrt{N}$ from the deterministic limit, and would provide a natural
limiting expression for the likelihood of the process. Note that, since
we do not assume that $\mathcal{P}$ is a Poisson or a Cox process, the
correlation structure of the limiting Gaussian process should be
different from than in \cite{pang2020functional, fan2020, tran06} and
more similar to the co-variance structure of a Crump--Mode--Jagers
process, described for instance in \cite{Jagers1984a}.

\subsection{Outline}

The rest of this paper is organized as follows. A formal description 
of the model is provided in \Cref{SS:descriptionModel}, and the
Kermack--McKendrick PDE is studied in \Cref{SS:mckvf}.

\Cref{sec:graph} contains the graph construction of our model, as well as
a rigorous definition of the ancestral process mentioned in
\Cref{SS:contactTracing}. Our proofs rely on showing the local weak convergence of
the graph of potential infectors to a limiting Poisson tree.
\Cref{SS:limitTree} describes this limiting tree, and provides a
characterization of the transmission chain leading to the infection of
individual in terms of the $h$-transform of a renewal process. Finally,
the convergence to the Poisson tree is carried out in \Cref{SS:proofLLN}
and our law of large numbers is proved in \Cref{SS:proofMainThm}.

\section{Description of the model}
\label{SS:descriptionModel}

In the following, we will consider an epidemic model in which
individuals' life trajectories are represented by independent stochastic
processes. We distinguish between two types of individuals:
\begin{itemize} 
    \item Susceptible individuals that have never been infected before.
    \item Infected individuals that have been infected in the past. We
        emphasize that the meaning of infected is a bit broader than
        usual. For instance, a recovered or dead individual is considered
        as infected. To each infected individual, we associate an age. The age
        is the time elapsed since the beginning of the infection.
\end{itemize}
There are $N$ individuals in the population. To each individual 
$x \in [N]$, we associate a pair of processes $(\mathcal{P}_x, X_x)$
describing respectively the process of secondary infections
and the successive stages of the disease experienced by the focal
individual $x$. More precisely:
\begin{itemize}
    \item The \emph{life-cycle process}, denoted by $(X_x(a);\, a \geq 0)$,
        is a random process valued in $\mathcal{S}$ where $X_x(a)$ is the stage
        of the disease (e.g., exposed, death, etc.) of $x$ at age $a$.
    \item The \emph{infection point process} $\mathcal{P}_x$ is a point
        measure describing the ages of potential infections.
\end{itemize}

Let us denote by $\mathcal{X}_x \coloneqq (\mathcal{P}_x, X_x)$. We will always
assume that $(\mathcal{X}_x;\, x \in [N])$ are i.i.d.\ and denote by 
$\mathcal{X} = (\mathcal{P}, X)$ their common distribution. The state
space of $\mathcal{X}$ is denoted by $\mathscr{X}$.

\begin{remark}
    Note that we allow for non-trivial dependence between the life-cycle and
    the infection process. Examples of such dependence can be that 
    a deceased individual is not infectious anymore, a hospitalized individual
    may have a reduced potential of infection due to quarantine, etc.
\end{remark}

We suppose that at $t = 0$, every individual is independently infected
with probability $I_0$. Let ${\cal I}_0^N$ be the set of initially
infected individuals. For each $x \in \mathcal{I}^N_0$ we need to
prescribe an age, or equivalently, an infection time. We assume that,
conditional on $\mathcal{I}^N_0$, the ages of the initial individuals
$(Z_x;\, x \in \mathcal{I}^N_0)$ are i.i.d.\ with common distribution
$g$. We also set $Z_x = 0$ for $x \notin \mathcal{I}_0$ for convenience.
Let us denote by $(\sigma^N_x;\, x \in \mathcal{I}^N_0)$ the infection
time of the initial infected, that is, $\sigma^N_x = -Z_x$. 

The epidemic now spreads as follows. Suppose that, at some time $t_0$, we have
defined a set $\mathcal{I}^N_{t_0} \subseteq [N]$ of infected individuals at time
$t_0$, and a vector $(\sigma^N_x;\, x \in \mathcal{I}^N_{t_0})$ of infection times.
Let $t_1$ be the first atom after $t_0$ of the point measure
\[
    \sum_{x \in \mathcal{I}^N_{t_0}} \sum_{a \in \mathcal{P}_x} \delta(\sigma^N_x + a).
\]
If there is no such atom, the infection stops. Otherwise, let $U$ be
uniformly chosen in $[N]$, independent of the rest: it is the first
individual that comes in contact with any of the infected individuals 
after time $t_0$. If $U \in \mathcal{I}^N_{t_0}$, then nothing happens, and 
we carry out the same procedure for the next atom $t_2$. If $U \notin
\mathcal{I}^N_{t_0}$, then, with probability $1-c(t_1)$, the infection is
ineffective in which case nothing happens and we consider the next infection
time $t_2$. Otherwise, set $\mathcal{I}^N_{t_1} = \mathcal{I}^N_{t_0} \cup \{U\}$
and $\sigma^N_U = t_1$, and continue the procedure as if starting from time $t_1$
with the initial infected set $\mathcal{I}^N_{t_1}$.
This inductive procedure will be reformulated in terms of an infection graph in
\Cref{SS:infectionGraph}.

\section{Kermack--McKendrick PDE} 
\label{SS:mckvf}

In this section we provide our definition of the solution to the
Kermack--McKendrick equation~\eqref{eq:mckvf}. We start with 
a formal resolution of the PDE using the method of characteristics.

Let $I_0$ be the initial density of infected individuals and $g$ the initial
age profile of the population. First, note that if $n$ is solution of the PDE,
then for every pair $(t,a)$ of non-negative numbers, $s \mapsto n(t-s, a-s)$ is
constant on $(0,t\wedge a)$. This yields
\begin{equation} \label{eq:mild-sir}
    \forall t,a \ge 0,\quad
    n(t,a) = \begin{cases}
        I_0 g(a-t) &\text{ when } a > t\\
        b(t-a) &\text{ when } a \leq t,
    \end{cases}
\end{equation} 
with 
\[
    \forall t \ge 0,\quad b(t) \coloneqq n(t, 0)
\]
is the number of new infections at time $t$. Moreover, 
\begin{align*}
    \dot{S}(t) &= -\int_0^\infty \partial_t n(t,a) \,\d a 
               = \int_0^\infty \partial_a n(t,a) \,\d a \\
               &= -b(t)
                = - c(t) S(t) \int_0^\infty \tau(a) n(t, a)\,\d a.
\end{align*}
As a result, since $S(0) = S_0$, we have
\begin{align*}
  S(t) &= S_0\exp\bigg(-\int_0^t c(s) \int_0^\infty \tau(a) n(s, a)\,\d a\,\d s\bigg)\\
       &= S_0 \exp\bigg(
           - \int_0^t c(s) \big(\int_0^s\tau(a)b(s-a)\,\d a 
           + I_0\int_s^\infty \tau(a) g(a-s) \,\d a\big)\,\d s
       \bigg)\\
       &= S_0 \exp\bigg(
           - \int_0^t c(s) \big(\int_0^s\tau(s-a)b(a)\,\d a 
           + I_0\bar{\tau}(s)\big)\,\d s,
       \bigg)
\end{align*}
where $\bar{\tau}(s)$ was defined in \eqref{eq:tauBar},
so necessarily 
\[
    B(t) \coloneqq \int_0^tb(s)\,\d s = S_0-S(t)   
\]
solves the nonlinear delay equation
\begin{equation} \label{eq:delay}
    B(t) = S_0 \Big[ 
        1 - \exp\bigg(-\int_0^t c(s) \big(\int_0^s\tau(s-a)\,B(\d a)
                      +I_0\bar{\tau}(s)\big)\,\d s\bigg)
        \Big]
\end{equation}
where $B(\d a) = b(a) \,\d a$ is the Stieltjes measure associated to the
nondecreasing map $B$. This motivates the following definition of a solution
to the Kermack--McKendrick equation.

\begin{definition}
    We say that $(n(t,a);\, t,a \ge 0)$ is a weak solution to
    \eqref{eq:mckvf} if there exists a nonnegative function $(b(t);\, t\ge 0)$
    such that:
    \begin{enumerate}
        \item the functions $n$ and $b$ are related through \eqref{eq:mild-sir};
        \item the function $B(t) \coloneqq \int_0^tb(s)\,\d s$ solves the 
            delay equation~\eqref{eq:delay}.  \qedhere
    \end{enumerate}
\end{definition}

If a nondecreasing function $B$ satisfies \eqref{eq:delay}, then we have the
following inequality:
\begin{equation} \label{eq:absoluteContinuity}
    B(t+u) - B(t) \le S(0) \int_t^{t+u} c(s) \big( \int_0^s \tau(s-a) B(\d a)
    + I_0 \int_0^\infty \tau(a+s) g(a) \,\d a \big) \,\d s.
\end{equation}
The previous inequality readily entails that $B$ is absolutely continuous,
and thus that we can find $b$ such that $B(t) = \int_0^t b(s)\,\d s$.
Therefore, existence and uniqueness of solutions to \eqref{eq:mckvf} 
reduce to existence and uniqueness of nondecreasing solutions to
\eqref{eq:delay}, which is provided by the following result.

\begin{lemma}
    There is a unique nondecreasing, nonnegative solution to
    \eqref{eq:delay}.
\end{lemma}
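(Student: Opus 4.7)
The plan is to fix any $T > 0$ and prove both existence and uniqueness of a solution on $[0, T]$ via fixed-point arguments for the map $\Phi$ defined by the right-hand side of \eqref{eq:delay}, viewed as acting on the convex set $\mathcal{E}_T$ of nondecreasing, right-continuous functions $B: [0, T] \to [0, S_0]$ with $B(0) = 0$. This set is stable under $\Phi$, since $\Phi(B)(t) = S_0(1 - e^{-\psi_B(t)})$ with $\psi_B$ nondecreasing. By \eqref{eq:absoluteContinuity}, any fixed point $B$ is automatically absolutely continuous with density $b \in L^1([0, T])$, and differentiating \eqref{eq:delay} a.e.\ yields
\[
b(t) = c(t)(S_0 - B(t)) F(t), \qquad F(t) = \int_0^t \tau(t-a) b(a) \,\d a + I_0 \bar{\tau}(t).
\]
A direct application of Fubini gives the a priori bound $\int_0^T F(s) \,\d s \le R_0 S_0 + I_0 \bar{R}_0 < \infty$.

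For uniqueness, I would take two solutions $B_1, B_2$ with densities $b_1, b_2$, set $h = |b_1 - b_2|$ and $H(t) = \int_0^t h(s) \,\d s$. The decomposition
\[
b_1(t) - b_2(t) = c(t)\bigl[(S_0 - B_1(t))(F_1(t) - F_2(t)) + (B_2(t) - B_1(t)) F_2(t)\bigr]
\]
yields the inequality
\[
h(t) \le S_0 \int_0^t \tau(t-a) h(a) \,\d a + F_2(t) H(t).
\]
Integrating over $[0, t]$ and using Fubini on the convolution term produces $\bigl(1 - S_0 T_\tau(t)\bigr) H(t) \le \int_0^t F_2(s) H(s) \,\d s$, where $T_\tau(u) = \int_0^u \tau(v) \,\d v$. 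Since $T_\tau(0^+) = 0$, one picks $T_0 > 0$ small enough that $S_0 T_\tau(T_0) \le 1/2$, and Gronwall's lemma (in integral form, valid because $F_2 \in L^1([0, T_0])$) forces $H \equiv 0$ on $[0, T_0]$. Iterating this argument over the successive intervals $[kT_0, (k+1)T_0]$ extends uniqueness to all of $[0, T]$.

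For existence, I would run a Picard iteration: set $B^{(0)} \equiv 0$ and $B^{(n+1)} = \Phi(B^{(n)})$. Each $B^{(n)}$ stays in $\mathcal{E}_T$, and the Volterra-Gronwall estimate above, applied to the successive differences $b^{(n+1)} - b^{(n)}$, shows that $(b^{(n)})$ is a Cauchy sequence in $L^1([0, T_0])$, so that $(B^{(n)})$ converges uniformly on $[0, T_0]$ to a function $B$ satisfying $B = \Phi(B)$ there. The construction is then extended to $[0, T]$ by patching over successive subintervals of length $T_0$.

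The main obstacle is the minimal regularity of $\tau$: only local integrability is assumed, which rules out any naive contraction of $\Phi$ in the uniform norm of $B$ and prevents pointwise control on $F$ or $b$. The key is that the combination of the Volterra structure, the $L^1$ bound on $F$, and the fact that $T_\tau(t) \to 0$ as $t \to 0^+$ provides a contractive estimate on sufficiently short intervals, from which both uniqueness and existence follow by patching.
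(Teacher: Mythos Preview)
Your approach is correct but takes a genuinely different route from the paper's. The paper obtains existence and uniqueness in one shot by a global Banach fixed-point argument: it works on the space $E_\gamma$ of nondecreasing nonnegative c\`adl\`ag functions with finite weighted norm $d_\gamma(f,g)=\int_0^\infty e^{-\gamma t}|f(t)-g(t)|\,\d t$, where $\gamma$ is chosen large enough that $\int_0^\infty e^{-\gamma t}\tau(t)\,\d t<1$. A short computation using $|e^{-x}-e^{-y}|\le|x-y|$ and Fubini yields $d_\gamma(\Phi f_1,\Phi f_2)\le \big(\int_0^\infty e^{-\gamma t}\tau(t)\,\d t\big)\,d_\gamma(f_1,f_2)$, so $\Phi$ is a strict contraction on $(E_\gamma,d_\gamma)$ and the result follows immediately.

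Your scheme instead differentiates the equation, works with the densities $b$, and exploits the Volterra structure: the key is that $T_\tau(t)\to 0$ as $t\to 0^+$, which gives a contraction on short intervals, and then you patch. This is a perfectly valid and classical strategy for Volterra-type equations; it is more elementary in that it never invokes the exponential-weight trick, and it would survive under the weaker hypothesis that $\tau$ is only locally integrable (which the paper mentions in a remark but does not use in its proof). The price is more bookkeeping. One small point worth tightening: for the Picard iteration, the ``same'' estimate is not literally the same, since $b^{(n+1)}(t)=c(t)(S_0-B^{(n+1)}(t))F^{(n)}(t)$ mixes $H_n$ with $h_{n-1}$; you get $H_n(t)\le S_0 T_\tau(T_0)H_{n-1}(T_0)+\int_0^t F^{(n-1)}(s)H_n(s)\,\d s$, and after Gronwall the contraction constant becomes $S_0 T_\tau(T_0)e^{M}$ with $M=S_0R_0+I_0\bar R_0$, so $T_0$ must be chosen a bit smaller than for uniqueness. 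This is easily arranged and does not affect the conclusion.
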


\begin{proof}
    Let us denote by $E$ the set of all nondecreasing, nonnegative,
    c\`adl\`ag functions on $[0, \infty)$. Recall the definition of the
    Malthusian parameter $\alpha$ from \eqref{eq:malthusian}. Fix
    some for $\gamma > \alpha \vee 0$, so that we have
    \begin{equation} \label{eq:largerThanMalthus}
        \int_0^\infty e^{-\gamma t} \tau(t) \,\diff t < 1.
    \end{equation}
    Define
    \[
        E_\gamma = \{f \in E : \int_0^\infty e^{-\gamma t} f(t) \,\d t <
        \infty \}.
    \]
    We endow $E_\gamma$ with the metric
    \[
        d_\gamma(f,g) = \int_0^\infty e^{-\gamma t} \abs{f(t)-g(t)} \,\diff t
    \]
    which makes $(E_\gamma, d_\gamma)$ a complete metric space.
    As any solution to \eqref{eq:delay} is bounded and continuous, it is
    sufficient to show existence and uniqueness of the solution in
    $E_\gamma$.

    We introduce the operator $\Phi \colon E_\gamma \to E_\gamma$ such that
    \begin{multline*}
        \Phi f(t) = 
        S_0 \Big(1- \exp\Big( 
            -\int_0^t c(s) \big(\int_0^s \tau(s-a) f(\d a) \big) \,\d s \\
            - I_0\int_0^\infty \big(\int_0^t c(s) \tau(a+s)\,\d s\big) g(a) \,\d a
        \Big) \Big),
    \end{multline*}
    where $f(\diff a)$ denotes the Stieltjes measure associated to $f$.
    Note that $\Phi f \in E_\gamma$, since it is clear that $\Phi f$ is
    bounded, continuous, nonnegative and nondecreasing. Let us show that 
    $\Phi$ is a contraction. We have, for $f_1, f_2 \in E_\gamma$, 
    \begin{align*}
        d_\gamma(\Phi f_1, \Phi f_2) 
        &\le S_0 \int_0^\infty e^{-\gamma t} 
        \abs*{\int_0^t c(s)\big( \int_0^s \tau(s-a) f_1(\d a) - 
        \int_0^s \tau(s-a) f_2(\d a) \big) \,\d s } \,\d t  \\
        &\le \int_0^\infty e^{-\gamma t} 
        \big( \int_0^t \tau(s) \abs*{f_1(t-s) - f_2(t-s)} \,\d s\big) \,\d t \\
        &= d_\gamma(f_1, f_2) \int_0^\infty e^{-\gamma t} \tau(t) \,\d t.
    \end{align*}
    By \eqref{eq:largerThanMalthus} we know that $\int_0^\infty e^{-\gamma t}
    \tau(t) \,\d t < 1$, showing that $\Phi$ is a contraction. The Banach
    fixed point theorem therefore shows that there exists a unique 
    $B \in E_\gamma$ such that $\Phi B = B$, ending the proof.
\end{proof}

\section{Graph of infection} \label{sec:graph}

\subsection{Infection graph} 
\label{SS:infectionGraph}

Recall the infection model defined in \Cref{SS:descriptionModel}, and the
notation $(\mathcal{P}_x;\, x \in [N])$ for the infection point
processes. Recall that $(Z_x;\, x \in [N])$ are i.i.d.\ random variables
with law
\begin{equation} \label{eq:initialAge}
    Z_x \sim \delta_0(\diff a) (1- I_0) \ + \  I_0 g(a) \diff a,
\end{equation}
and that we have defined $\mathcal{I}^N_0 = \{ x : Z_x > 0 \}$ for the
set of initially infected individuals, and $(\sigma^N_x = -Z_x;\, x \in
\mathcal{I}^N_0)$ for their infection (or birth) time. Intuitively, $Z_x$
encodes the age of infection of individual $x$ at time $t=0$. Susceptible
individuals have age $0$, whereas the age of an infected individual is
chosen according to the density $g$. Define the shifted infection measure
\begin{equation} \label{eq:shiftedPP}
    \widehat {\cal P}_x \ = 
    \sum_{a \in \mathcal{P}_x}  \indic{a - Z_x \ge 0} \delta(a-Z_x)  
\end{equation}
Note that if $x$ is susceptible (i.e., $Z_{x}=0$), we have
$\widehat{\mathcal{P}}_x = {\cal P}_x$. Vertices with $Z_x=0$ will be
said of type susceptible $(S)$. Vertices with $Z_{x}>0$ will be said of
type infected $(I)$.

Recall that each atom of a point process $\widehat{\mathcal{P}}_x$
encodes a potential infectious contact, which is targeted to a uniformly
chosen individual in the population. We enrich the infection point
processes by adding the information about the label of the target
individual. Formally, we consider a sequence of i.i.d.\ random variables
$(U_{x,i};\, x\in[N], i\in\mathbb{N})$ uniformly distributed on $[N]$.
Define 
\begin{equation} \label{eq:decoratedPP}
    \forall x\in[N], \quad \widehat{\mathcal{N}}_{x} 
    \ \coloneqq \ 
    \sum_{a_i \in\widehat{\mathcal{P}}_{x}} \delta(a_i, U_{x,i})
\end{equation}
where $a_1<a_2 <\cdots$ in the sum are the atoms of
$\widehat{\mathcal{P}}_x$ listed in increasing order. We now build a
graph out of the family $(\widehat{\mathcal{N}}_x;\, x \in [N])$ that
records the potential infections in the population.

\begin{definition}[Infection graph]\label{def:w-graph}
    The \emph{infection graph} built from the i.i.d.\ collection
    of triplets $(\widehat{\mathcal{N}}_x, X_x, Z_x;\, x \in [N])$ is the random
    oriented geometric marked graph $\mathcal{G}^N=(V^N,E^N)$ with $V^N=[N]$ and
    \[
        E^N = \bigcup_{x \in [N]} \bigsqcup_{(a,j) \in \widehat{\mathcal{N}}_x} \{(i,j)\},
    \]
    where the second union is a disjoint union, meaning that for each
    pair $(i,j)$ we allow for multiple edges from $i$ to $j$ in the
    infection graph. The marks and edge lengths are defined as follows.
    \begin{enumerate}
    \item Each edge $e=(i_e,j_e)$ corresponds to an
    atom $(a_e, j_e)$ of the point process $\widehat{\mathcal{N}}_{i_e}$
    defined in \eqref{eq:decoratedPP}. The age $a_e$ will be referred to
    as the length of edge $e$.
    \item For each vertex $x\in V^N$, we define the mark at $x$ as
    \[ 
        m_x \coloneqq (Z_x, \mathcal{X}_x),
    \]
    where the initial infection age $Z_x$ is defined in
    \eqref{eq:initialAge}.
    \end{enumerate}
\end{definition}

\begin{remark}
    As stated in the theorem, ${\cal G}^N$ is an oriented geometric marked
    graph. By geometric, we mean that edges have lengths. The orientation is
    dictated by the direction of potential infections, and the meaning of an
    edge $(i,j)$ is that individual $j$ is potentially infected by $i$.
    Finally, the first coordinate of the marking allows to distinguish
    between infected and susceptible individuals at time $t=0$.
\end{remark}

A path in $\mathcal{G}^N$ is a set of edges $\pi = (e_1, \dots, e_n)$ such
that, $j_{e_k} = i_{e_{k+1}}$, with the notation $(i_e, j_e)$ for the origin
and target vertices of the edge $e$. The length of a path $\abs{\pi}$ is
defined as 
\[
    \abs{\pi} = \sum_{e \in \pi} a_{e}.
\]
The genealogical (or topological) distance is defined as the number of
edges composing the path ($n$ in our specific example). For $k \le n$, we
define the $k$-truncation of $\pi$ as 
\[ 
    \tau_k \pi \ \coloneqq \ (e_1, \dots, e_k).
\]
We say that $\pi$ is a path from $i$ to $j$ if $i_{e_1} = i$ and
$j_{e_n}=j$. A path in $\mathcal{G}^N$ from $i$ to $j$ corresponds to a
potential infection chain between $i$ and $j$. The length of the path is
the time elapsed between the infection of $i$ and $j$. The genealogical
distance is the number of infectors along the chain.

It turns out the infection graph that we have constructed corresponds to
a directed version of a configuration model. The (undirected)
configuration model is a well-studied random graph model where, starting
from a prescribed number of half-edges for each of the $N$ vertices
$(D_1, \dots, D_N)$, a random graph is obtained by pairing these
half-edges uniformly at random \cite[Section~2.2.2]{van2017stochastic}.
Let us make this connection explicit. 

For $x \in [N]$, denote by $D^\Out_x$ (resp.\ $D^\In_x$) the out-degree
(resp.\ in-degree) of vertex $x$ in $\mathcal{G}^N$. Clearly, $D^\Out_x =
\abs{\widehat{\mathcal{P}}_x}$ and, since each out-edge is pointing
towards a uniformly chosen vertex, conditional on $(D^\Out_x)_x$,
\begin{equation} \label{eq:multinomial}
    (D^\In_1, \dots, D^\In_N) \sim \mathrm{Multinomial}\big(
        \sum_{x \in [N]} D^\Out_x ; (\tfrac{1}{N}, \dots, \tfrac{1}{N})
    \big).
\end{equation}
Suppose that $(D^\Out_x, D^\In_x)_x$ are prescribed. We can construct a
graph with this given sequence of in and out degrees in the following
way:
\begin{enumerate}
    \item attach to each vertex $x \in [N]$ a number $D^\Out_x$ of out
        half-edges, and a number $D^\In_x$ of in half-edges; and
    \item pair each out half-edge to a different in half-edge.
\end{enumerate}
If the pairing in the second step is made uniformly among all
possibilities, the resulting random graph is called a \emph{directed
configuration model} with degree sequence $(D^\Out_x, D^\In_x)_x$.

In the infection graph it is not hard to see that, again because each
out-edge is pointing towards a uniformly chosen vertex, conditional on 
$(D^\Out_x, D^\In_x)_x$ the pairing of in- and out-edges in
$\mathcal{G}^N$ is made uniformly. Furthermore, conditional on
$(D^\Out_x, D^\In_x)_x$, the marks $(m_x)_x$ are independent, and $m_x$
has the distribution of $(Z, \mathcal{X})$ conditional on
$\abs{\widehat{\mathcal{P}}_x} = D^\Out_x$. We record this connection as
a proposition for later use.

\begin{proposition} \label{prop:configurationModel}
    Let $(V^N, E^N)$ be a directed configuration model on $N$ vertices
    with degree sequence $(D^\Out_x, D^\In_x)_x$ where $(D^\Out_x)_x$ are
    i.i.d.\ and distributed as $\abs{\widehat{\mathcal{P}}}$ and
    $(D^\In_x)_x$ are distributed as \eqref{eq:multinomial}. Conditional
    on $(V^N, E^N)$, mark the vertices and add edge lengths in such a way
    that:
    \begin{enumerate}
        \item the marks $(m_x)_x$ are independent and 
            \[
                m_x \sim (Z, \mathcal{P}, X) \text{ conditional on
                    $\abs{\widehat{\mathcal{P}}} = D^\Out_x$};
            \]
        \item if $(e_i)$ are the edges going out of $x$, then $a_{e_i}
            = A_i$, where $(A_i;\, i \le \abs{\widehat{\mathcal{P}}_x})$
            is a uniform permutation of the atoms of $\widehat{\mathcal{P}}_x$.
    \end{enumerate}
    Then $(V^N, E^N, (m_x)_x, (a_e)_e)$ is distributed as the infection
    graph $\mathcal{G}^N$.
\end{proposition}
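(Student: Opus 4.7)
The plan is to verify the distributional identity by a sequential conditioning argument, peeling off the degree sequence, then the pairing, then the marks, and finally the edge lengths. First, the out-degrees $D^\Out_x = \abs{\widehat{\mathcal{P}}_x}$ are i.i.d.\ as $\abs{\widehat{\mathcal{P}}}$ by direct inspection of \Cref{def:w-graph}. Conditional on $(D^\Out_x)_x$, the family $(U_{x,i})_{x\in[N],\, i\le D^\Out_x}$ remains i.i.d.\ uniform on $[N]$ and independent of the marks and point processes (this uses that the $U_{x,i}$ are drawn independently of the $\mathcal{X}_x$ and $Z_x$), and the in-degrees $D^\In_y$ are simply the counts $\#\{(x,i):U_{x,i}=y\}$, so \eqref{eq:multinomial} follows from the standard balls-in-boxes identity.

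Second, I would argue that conditional on $(D^\Out_x, D^\In_y)_{x,y}$, the sequence $(U_{x,i})$ is uniform among all sequences realizing the given in-degrees. Under a canonical labelling of the in half-edges at each target vertex $y$ --- for instance, labelling the $k$-th incoming half-edge at $y$ by the order in which its out-vertex $x$ and index $i$ appear in lexicographic order --- this uniformity translates exactly into the statement that the induced pairing of out- to in-half-edges is uniform among all matchings compatible with the degree sequence, which is the defining property of the directed configuration model. For the marks, since the $(Z_x,\mathcal{X}_x)$ are i.i.d.\ and $D^\Out_x$ is a measurable function of $\mathcal{X}_x$, the vectors $m_x=(Z_x,\mathcal{X}_x)$ are conditionally independent given $(D^\Out_x)_x$, with $m_x$ distributed as $(Z,\mathcal{P},X)$ conditioned on $\abs{\widehat{\mathcal{P}}}=D^\Out_x$, which is exactly item~1 of the target construction.

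It remains to check item~2 on edge lengths. Conditional on $\mathcal{X}_x$, the ordered atoms $a_1 < \cdots < a_{D^\Out_x}$ of $\widehat{\mathcal{P}}_x$ are deterministic, and each out-edge $e$ of $x$ in the infection graph corresponds to some index $i_e\in\{1,\dots,D^\Out_x\}$ via $e = (x, U_{x,i_e})$, with length $a_{i_e}$. Because the $U_{x,i}$ are i.i.d.\ uniform and thus exchangeable in $i$, and because only the multiset $\{U_{x,i}\}_i$ is retained in the matching produced by the configuration model representation, the bijection $e \mapsto i_e$ from out-edges of $x$ to $\{1,\dots,D^\Out_x\}$ is a uniformly random permutation independent of everything else. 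Hence $(a_{e_i})_i$ is a uniform permutation of $(a_1,\dots,a_{D^\Out_x})$, matching item~2. The main obstacle I anticipate is the bookkeeping around half-edge identities, i.e.\ pinning down a canonical enumeration of in half-edges so that ``uniform pairing'' becomes an unambiguous statement that can be directly matched against the exchangeability of the $U_{x,i}$; once this convention is fixed, the remainder consists of elementary exchangeability and conditioning computations.
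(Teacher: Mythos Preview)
Your proposal is correct and follows the same line of reasoning as the paper, which does not give a formal proof but merely records the observations preceding the proposition (``it is not hard to see that \dots\ the pairing \dots\ is made uniformly'' and ``the marks $(m_x)_x$ are independent, and $m_x$ has the distribution of $(Z,\mathcal{X})$ conditional on $\abs{\widehat{\mathcal{P}}_x}=D^\Out_x$''). One minor slip: $D^\Out_x=\abs{\widehat{\mathcal{P}}_x}$ is a measurable function of the full mark $m_x=(Z_x,\mathcal{X}_x)$, not of $\mathcal{X}_x$ alone, since the shift in \eqref{eq:shiftedPP} involves $Z_x$; the conditioning argument for item~1 goes through unchanged once this is corrected.
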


\subsection{Infection process}
\label{sec:def:active-goedesics}

Conditional on a realization of the infection graph $(V^N, E^N)$, we attach an
additional independent random variable $s_e$  uniform on $[0,1]$ to every
edge $e\in E^N$ of the graph. This random variable  will encode what we
will call the contact intensity of edge $e$. Roughly speaking, if the
contact occurs at time $t$, this contact translates into an
infection if{f} two conditions are satisfied. First, the contact
intensity should be strong enough in the sense that  $s_e \leq c(t)$ (see
\eqref{eq:r1} below). Secondly, the target individual should not have
been infected before (see \eqref{eq:r2} below). We make this more precise
in the next definition; see also \Cref{fig:active_paths}.

\begin{figure}[ht] \centering
  \includegraphics{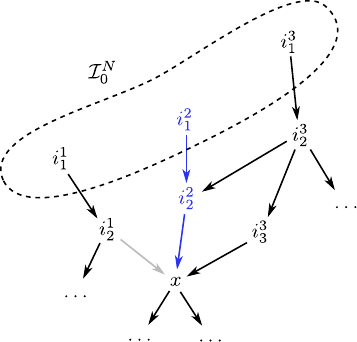}
  \caption{Ancestors of individual $x$ in $\mathcal{G}^N$.
    In this picture, we have four potential infection paths from $\mathcal{I}_0^N$ to $x$: $\pi^1=(e^1_1,e^1_2)$, $\pi^2=(e^2_1,e^2_2)$, $\pi^3=(e^3_1,e^3_2, e^3_3)$ and $\pi^4=(e^3_1,e_*, e^2_2)$, where we write $e^\ell_k = (i^\ell_k,i^\ell_{k+1})$ and $e_* = (i^3_2,i^2_2)$.
    Assume first that $\pi^1$ is the shortest path, but that $s_{e^1_2}>c(|\pi^1|)$ --- the edge is grayed out in the figure.
    Then $\pi^1$ is not an active path.
    Now let us assume that $\pi^2$ is active and that $|\tau_1\pi^2|=a_{e^2_1} < a_{e^3_1}+a_{e_*}=|\tau_2\pi^4|$.
    This means that $\pi^4$ cannot be the active geodesic.
    Finally, if $\pi^2$ and $\pi^3$ are two active paths and $|\pi^2| < |\pi^3|$, then $\pi^2$ (in blue) is the active geodesic from $\mathcal{I}_0^N$ and $\sigma^N_x = |\pi^2|$.}
  \label{fig:active_paths}
\end{figure}

\begin{definition}[Active geodesic] \label{def:active-goedesics}
Let $\pi=(e_1,\cdots,e_n)$ be a path with $i_{e_1}\in{\cal I}_0^N$. The path is said to be active if{f}
\begin{equation}\label{eq:r1}
    \forall k \le n, \ \ s_{e_k}  \leq c\big( |\tau_k  \pi| \big).
\end{equation}
For every $x\notin{\cal I}_0^N$, let $\Xi^N(x)$ be the set of active
paths from ${\cal I}_{0}^N$ to $x$. The path is said to be the active
geodesic from ${\cal I}_0^N$ to $x$  if{f} 
\begin{equation}\label{eq:r2}
    \forall k\le n, \ \ \tau_k\pi  \ = \ \argmin_{\pi' \in \Xi^N(j_{e_k})} |\pi'|. 
\end{equation}
Finally, we define the infection time of $x$ --- denoted by $\sigma^N_x$
--- as the length of the active geodesic from ${\cal I}_0^N$ to $x$, with
the convention that $\sigma^N_x=\infty$ if the geodesic does not exist. 
\end{definition}

\begin{remark}
\begin{enumerate}
\item Since $\tau$ has a density, there is at most one path satisfying
    the minimization problem \eqref{eq:r2}. 
\item If $c \equiv 1$, then any path in the infection graph is active, so
    that our definition coincides with the usual definition of a geodesic
    on a geometric graph. In particular, \eqref{eq:r2} just states that if
    $\pi=(e_1,\dots,e_n)$ is the geodesic from ${\cal I}_0^N$ to $x$,
    then the truncated path $\tau_k \pi$ is the geodesic from ${\cal I}_0^N$ 
    to $j_{e_k}$. Thus, when $c\equiv 1$, all the information about the
    infection process is contained in the infection graph and the extra
    variables $s_e$ do not play any role.
\end{enumerate}
\end{remark}

\subsection{The ancestral path}\label{sect:ancestral-path}

\begin{definition}[Infection and ancestral paths]\label{def:infection-path} 
\leavevmode
\begin{itemize}
    \item Let us consider $x$ of type  $(S)$ such that $\sigma^N_x<\infty$ and write $\pi=(e_1,\dots,e_n)$ with $e_k=(i_k,j_k)$
    for the active geodesic from ${\cal I}_0^N$ to $x$.
    We define the infection path ${\cal R}^N_x$ from $x$ to ${\cal I}_0^N$ as
    \begin{equation}\label{eq:active-geo}
        {\cal R}^N_x(0) =\sigma^N_x, \ \  \forall \ell\in[n], \ \ {\cal R}^N_x(\ell) \ = \ \sigma^N_x- \sum_{k=0}^{\ell-1} (a_{e_{n-k}} + Z_{i_{{n-k}}}).
    \end{equation}
    Finally, we define the ancestral process as
    \[
        {\cal A}^N_x \ \coloneqq \ \bigg( {\cal R}^N_x(\ell),  {\cal X}_{v_{\ell}}   \bigg)_{\ell=0}^n, \ \ 
        \mbox{where $v_k = \left\{ \begin{array}{cc} i_{n+1-\ell} & \mbox{if $\ell\neq 0$} \\ x=j_n & \mbox{if $\ell=0$} \end{array}\right.  $} 
    \]
    to be the vector recording the information along the chain of infection (age of infection, infection measure, life-cycle).
    \item If $x$ is of type $(S)$ but $\sigma^N_x=\infty$, then ${\cal A}_x^N$ is defined as the empty sequence. 
    \item If $x$ is of type $(I)$, then 
    ${\cal A}^N_x \coloneqq ( \sigma_x^N, {\cal X}_x )$.
\end{itemize}
\end{definition}
In words, the random path ${\cal R}^N_x$ is obtained by tracing backward
in time the chain of infection from the focal individual $x$ to the set
of initially infected individuals. The increments of the path are given by the
successive age of infection. ${\cal R}^N_x(\ell)$ is the time of infection
of the $\ell$-th ancestor along the chain;  the variable ${\cal X}_{v_{\ell}}$ 
encodes its infection and life-cycle processes. (Assuming that
individuals are ranked from the focal individual $x$ to the initial
$(I)$ individual.) Note that in the sum \eqref{eq:active-geo}, all the
$Z_{i_{n-k}}$ terms are equal to $0$, except when $k=n-1$. In words, the
oldest ancestor is the only type $(I)$ individual along the chain of
infection. For $k=n-1$, the corresponding increment is decomposed into
two parts: (1) the undershoot $a_{e_1}$ and (2) the overshoot $Z_{i_{1}}$
corresponding to the age of infection of the $(I)$ individual at time
$t=0$. See \Cref{fig:ancestral_path}.

\begin{figure}[ht] \centering
  \includegraphics{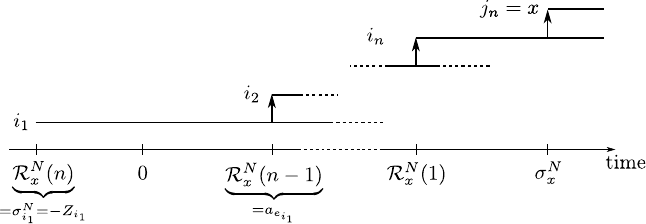}
  \caption{Infection path of individual $x$.} \label{fig:ancestral_path}
\end{figure}

\bigskip

We close this section by a brief description of the topology underlying
the set of ancestral paths. Let ${\cal M}$ refer to the set of locally
finite positive measures on $[0, \infty)$, and equip $\mathcal{M}$ with a
metric $d_{\mathcal{M}}$ that induces the vague topology
\cite[Section~4.1]{Kal17}. We denote by $d_{\cal S}$ the Skorohod metric on
the set of càdlàg processes valued in ${\cal S}$ denoted by
$\mathbb{D}(\mathcal{S})$. The space $\RR_+ \times
{\cal M} \times \mathbb{D}(\mathcal{S})$ is equipped with the sup metric $\rho$ defined
as
\[
\forall (x,y,z), (x',y',z'), \ \ \rho\left( (x,y,z), (x',y',z') \right) \ = \ \max\bigg( |x-x'|, d_{\cal M}( y,y'), d_{\cal S}( z,z')  \bigg).
\]
Each ancestral path is valued in the space
\[
\bigcup_{n=0}^\infty \bigg( \RR_+ \times {\cal M} \times \mathbb{D}(\mathcal{S}) \bigg)^n
\]
equipped with the metric ${\cal D}$ defined as follows
\[
    {\cal D}\bigg((m_1,\dots, m_{n}), (m'_1,\dots, m_{n'}')\bigg)  = 
    \begin{cases}
        1 &\text{if $n\neq n'$} \\
        1 \wedge \max( \rho(m_1,m_1'),\dots, \rho(m_n, m'_{n'}) ) &\text{if $n=n'$}.
    \end{cases}
\]

\subsection{Local weak convergence}
\label{SS:localTopology}

We introduce the notion of \emph{local weak convergence} \cite{AS04, BS01}. Intuitively, a
sequence of graphs converges in the local weak sense if the local
structure around a typical vertex (meaning a uniformly chosen vertex)
converges in distribution to a random limit. We make this definition precise.

A pointed oriented geometric marked (pogm) graph $G$ is characterized by
five coordinates $G = (V,E,(a_e),(m_x), \emptyset)$, respectively the set of
vertices, the set of edges, $(a_e)_{e\in E}$ the lengths of the edges,
$(m_x)_{x\in V}$ the set of marks, and $\emptyset \in V$ the pointed
vertex. We let $\mathscr{H}$ denote the set of pogm graphs, and equip it
with a metric $d_{\mathscr{H}}$ so that $(\mathscr{H}, d_{\mathscr{H}})$ 
is a Polish space. A graph isomorphism $\phi$ between two \emph{finite}
pogm graphs $G=(V,E,(a_e),(m_x),\emptyset)$ and
$G'=(V',E',(a'_e),(m'_x),\emptyset')$ is a bijection from $V$
to $V'$ such that
\begin{enumerate}
    \item $(u,v)\in E$ if{f} $(\phi(u), \phi(v))\in E'$. 
    \item $\phi$ maps the  reference vertex of $G$ to the
        reference vertex in $G'$. 
\end{enumerate}
By convention, we set $\min(\emptyset)=\infty$ in the following. Let
$G = (V,E, (a_e),(m_x),\emptyset)$, $G'=(V',E',(a'_e),(m'_x),\emptyset')$ 
be two elements of $\mathscr{H}$. Define 
\[
    d(G, G') = \min\{1, \min_{\phi}
        \bigg(\max_{e\in E} \abs{a_e - a'_{\phi(e)}} \vee \max_{x\in
V} | \rho(m_x, m'_{\phi(x)}) | \bigg) \}
\]
where the minimum is taken over all possible graph isomorphisms between
the two graphs (in the sense prescribed above, that is, we only consider
the isomorphisms preserving the pointed vertex). If there is no such
isomorphism between $G$ and $G'$, we set $d(G, G')=1$.  

For $G \in \mathscr{H}$ and $y \in G$, the topological (or
\emph{genealogical}) distance to the reference vertex $x$ is defined as
\[  
    \inf\{ n  : \text{there exists a path $(y=x_0,\dots, x_n=x)$  in $G$}\}. 
\]
For every $r\in \mathbb{N}^*$, we denote by $[G]_r$, the
subgraph induced by the vertices at a topological distance to the
origin, that is, to the pointed vertex, less than $r$. For two elements
$G, G'\in\mathscr{H}$, we define the (pseudo-)distance
$d_\mathscr{H}$ as follows
\[ 
    d_\mathscr{H}(G, G') \ = \  \sum_{r} 2^{-r}
    d([G]_r, [G']_r). 
\]
The metric $d_\mathscr{H}$ naturally induces a notion of local
convergence on (equivalence classes of) $\mathscr{H}$. Using standard
arguments, we can see that $(\mathscr{H}, d_\mathscr{H})$ is a Polish
space.

Given an oriented geometric marked graph $G^N$ of size $N$, for $x \in [N]$
define $G^N(x)$ as the subgraph of $G^N$ induced by all vertices $y$ with an
oriented \emph{valid} path from $y$ to $x$ (including $x$ itself), 
where we call a path $(y = y_0, y_1, \dots , y_k, x)$ valid if and only
if for all $1 \le i \le k$, the node $y_i$ is of type ($S$). $G^N(x)$ is
therefore the graph that contains all potential chains of infection
leading to the infection of node $x$ from an initially infected
individual.

We treat $G^N(x)$ as an element of $\mathscr{H}$, with $x$ as the
reference vertex. We can construct a measure on $\mathscr{H}$ out of the
graph $G^N$ by assigning the root $x$ uniformly at random:
\[
    P(G^N) = \frac{1}{N} \sum_{x \in [N]} \delta_{G^N(x)}.
\]
If the graph $G^N$ is random, $P(G^N)$ is a \emph{random} measure. 
The following definition is taken from Definition~3.6 in
\cite{garavaglia20}.

\begin{definition}[Local weak convergence]
    We say that a sequence of random pogm graphs $(G^N)_N$ converges in
    probability in the local weak sense to a random graph $G \in
    \mathscr{H}$ if 
    \[
        P(G^N) \longrightarrow \mathcal{L}(G)
    \]
    in probability for the weak topology on measures on $\mathscr{H}$,
    and where $\mathcal{L}(G)$ is the law of $G$.
\end{definition}

We end this section with a direct consequence of the various definitions.

\begin{lemma} \label{lem:double-continuous-mapping}
    Consider a metric space $E$ and a functional $\Phi \colon \mathscr{H}
    \to E$. Suppose that for all $N\geq 1$, $G^N$ is a random pogm graph of size $N$, and
    that $(G^N)$ converges in probability in the local weak sense to 
    some other pogm graph $G$. If $\Phi$ is continuous on
    a set $A \subseteq \mathscr{H}$ such that $\P(G \in A) = 1$, then
    \[
        \frac{1}{N} \sum_{i=1}^N \delta_{\Phi(G^N)} \longrightarrow
        \mathcal{L}(\Phi(G))
    \]
    in probability for the weak topology on measures on $E$, and where 
    $\mathcal{L}(\Phi(G))$ is the law of $\Phi(G)$.
\end{lemma}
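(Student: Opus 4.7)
The key observation is that the empirical measure in the conclusion is nothing but the pushforward of the empirical root distribution by $\Phi$:
\[
\frac{1}{N}\sum_{i=1}^N \delta_{\Phi(G^N(i))} \;=\; \Phi_{*} P(G^N),
\]
where $P(G^N) = \frac{1}{N}\sum_{x \in [N]} \delta_{G^N(x)}$ is the random probability measure on $\mathscr{H}$ appearing in the definition of local weak convergence. Thus the statement reduces to a continuous mapping theorem for weak convergence of measures, upgraded from a deterministic statement to an in-probability one.

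The first step is the deterministic part. Fix any bounded continuous $f : E \to \RR$. The composition $f \circ \Phi$ is bounded Borel and continuous on $A$, so its set of discontinuities is contained in $A^c$, which is $\mathcal{L}(G)$-negligible by hypothesis. By the standard Portmanteau extension of the continuous mapping theorem, for any deterministic sequence of probability measures $\mu_N$ on $\mathscr{H}$ with $\mu_N \to \mathcal{L}(G)$ weakly, one has
\[
\int_{\mathscr{H}} f\circ\Phi \, d\mu_N \;\longrightarrow\; \int_{\mathscr{H}} f\circ\Phi \, d\mathcal{L}(G) \;=\; \int_E f \, d\mathcal{L}(\Phi(G)).
\]

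To pass from this deterministic statement to the in-probability statement about the random measures $\Phi_{*} P(G^N)$, the plan is to apply the standard subsequence trick. The space of Borel probability measures on $\mathscr{H}$ equipped with the weak topology is itself Polish, so the in-probability convergence $P(G^N) \to \mathcal{L}(G)$ means that every subsequence admits a further subsequence $(N_k)$ along which $P(G^{N_k}) \to \mathcal{L}(G)$ almost surely. Along such a subsequence the deterministic step above yields, for each fixed bounded continuous $f$,
\[
\int_E f \, d(\Phi_{*} P(G^{N_k})) \;\longrightarrow\; \int_E f \, d\mathcal{L}(\Phi(G)) \quad \text{a.s.}
\]
and this in turn gives in-probability convergence of these integrals along the full sequence. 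Since the weak topology on probability measures on $E$ is metrizable by a countable separating family of bounded continuous test functions, a diagonal extraction upgrades this pointwise-in-$f$ convergence in probability to convergence in probability of the random measures themselves, which is the desired conclusion.

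The only (mild) obstacle is that $\Phi$ is not globally continuous, so the classical continuous mapping theorem for weak convergence does not apply verbatim; this is resolved by the standard version that only requires the discontinuity set to be negligible under the limit. The subsequence argument then transfers the conclusion from the deterministic to the in-probability setting without any further probabilistic input.
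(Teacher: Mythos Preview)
Your proof is correct and rests on the same key observation as the paper's: the empirical measure in question is exactly the pushforward $\Phi_* P(G^N)$, so the result is a continuous-mapping statement for the map $P \mapsto \Phi_* P$.

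The paper's argument is a little more streamlined than yours. Rather than unwinding into subsequences and a countable family of test functions, the paper applies the continuous mapping theorem \emph{twice}: first it observes that the map $P \mapsto P\circ\Phi^{-1}$ from $\mathcal{P}(\mathscr{H})$ to $\mathcal{P}(E)$ is continuous at $P=\mathcal{L}(G)$ (this is itself an application of the continuous mapping theorem at the level of random variables, using that the discontinuity set of $\Phi$ is $\mathcal{L}(G)$-null), and then applies the continuous mapping theorem for convergence in probability to the random element $P(G^N)$ of $\mathcal{P}(\mathscr{H})$. Your subsequence-plus-test-function route reaches the same destination by a more explicit path; the only place to be slightly careful is that the countable family you invoke must be convergence-determining (not merely separating), which is available since $\mathscr{H}$ is Polish and the relevant image measures are supported on a separable subset of $E$.
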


\begin{proof}
    For a probability measure $P$ on $\mathscr{H}$, let $P \circ
    \Phi^{-1}$ denote the push-forward measure of $P$ by $\Phi$. Clearly
    \[
        \frac{1}{N} \sum_{i=1}^N \delta_{\Phi(G^N)} 
        = P(G^N) \circ \Phi^{-1}.
    \]
    According to the continuous mapping theorem, the result is proved if
    we can show that the mapping $P \mapsto P \circ \Phi^{-1}$ is
    continuous at $P = \mathcal{L}(G)$ for the weak topology. Let $P^N
    \to \mathcal{L}(G)$ weakly, and $\tilde{G}^N \sim P^N$. Another
    application of the continuous mapping theorem shows that
    $\Phi(\tilde{G}^N) \to \Phi(G)$ in distribution, showing the result.
\end{proof}

\section{A limiting Poisson random tree} 
\label{SS:limitTree}

\subsection{Palm infection measures}

Recall that ${\cal X}_x = ({\cal P}_x, X_x)$ is the pair encoding the
infection and the life-cycle process and ${\cal P}$ is a point process
where each atom represents a potential infection event. Define
$\abs{\mathcal{P}} \coloneqq \int \d {\cal P}(a)$ which is interpreted as the total
number of potential infections (or {\it contacts}) along the course of
infection. We define a triplet of random variables $(W, \mathcal{P}^{\star},
X^{\star}) \equiv (W, \mathcal{X}^{\star})$ valued in 
$\RR_+ \times {\cal M} \times {\cal S} \equiv \RR_+ \times \mathscr{X}$ 
such that for every bounded continuous function $f$
\begin{eqnarray*}
    \EE\bigg( f(W, \mathcal{P}^{\star}, X^{\star}) \bigg) & = & \frac{1}{R_0} \EE\bigg(  \int f(a, {\cal P}, X) \d {\cal P}(a)  \bigg) \\
    & = & \frac{1}{R_0} \EE\bigg(  |{\cal P}| \  \times  \int  \frac{1}{|{\cal P}|} f(a, {\cal P}, X) \d {\cal P}(a)  \bigg)
\end{eqnarray*}
In words, we first bias the pair ${\cal X} \ = ({\cal P}, X)$ by $\abs{\cal P}$. 
Conditional on the resulting biased pair $\mathcal{X}^\star = (\mathcal{P}^\star, 
X^\star)$, the r.v.\ $W$ is obtained by picking an atom of the infection
measure $\mathcal{P}^\star$ uniformly at random. 

\begin{definition}[Campbell and Palm measures]
\label{def:campbel}
The law of $(W, \mathcal{X}^\star)$ is the Campbell's measure associated to
${\cal X}$ \cite{baccelli2020random}. The Palm measure at $a\in\RR_+^*$
is defined as the distribution of the random pair ${\cal X}^\star$
conditioned on the event $\{W=a\}$. We will use the notation
$\mathcal{X}^{(a)}$ for a random variable with the Palm measure at $a$.
See again \cite{baccelli2020random} for a precise definition of this
conditioning.
\end{definition}

Recall that $\tau$ is the intensity measure of $\mathcal{P}$ defined in
\eqref{eq:intensity}, and that we can write it as $\tau = R_0 \nu$,
where the total mass $R_0$ and the probability measure $\nu$ are defined
in \eqref{eq:R0}. The next result is standard from Palm measure theory.

\begin{lemma}
The random variable $W$ is distributed according to $\nu$.
\end{lemma}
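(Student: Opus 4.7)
The plan is to derive the distribution of $W$ by specializing the defining relation of the Campbell measure to test functions depending only on the first coordinate, and then using the definition of the intensity measure.

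First, I would take a bounded continuous function $h \colon \RR_+ \to \RR$ and apply the Campbell identity of \Cref{def:campbel} to the test function $f(a, \mathcal{P}, X) = h(a)$. This gives
\[
    \EE\bigl[ h(W) \bigr] = \frac{1}{R_0} \EE\Bigl[ \int h(a)\, \d\mathcal{P}(a) \Bigr].
\]
Next, by Fubini's theorem (or directly from the definition of the intensity measure in \eqref{eq:intensity}), the expectation on the right-hand side equals $\int h(a)\, \tau(\d a)$. Dividing by $R_0$ and recalling from \eqref{eq:R0} that $\nu(\d a) = \tau(\d a)/R_0$, we obtain
\[
    \EE\bigl[ h(W) \bigr] = \int h(a)\, \nu(\d a),
\]
which is exactly what it means for $W$ to be distributed according to $\nu$.

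There is essentially no obstacle here: the statement is a standard consequence of the way Palm and Campbell measures are set up. The only mild point to mention is that the finiteness of $R_0$ (guaranteed by our standing assumption) ensures that the Campbell measure is genuinely a probability after the $1/R_0$ normalization, so that the marginal computation above is legitimate for all bounded continuous $h$.
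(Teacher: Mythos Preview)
Your argument is correct: marginalizing the Campbell measure over the first coordinate and using the definition of the intensity measure $\tau$ immediately gives $\EE[h(W)] = \int h\,\d\nu$. The paper does not actually prove this lemma---it simply declares it ``standard from Palm measure theory''---so your write-up is exactly the kind of routine verification the authors omitted.
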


\subsection{Definition of the Poisson tree}
\label{def:poisson-tree}

Recall that we have defined $\bar{\tau}$ in \eqref{eq:tauBar} by
\[
    \bar{\tau}(u) = \int_0^\infty g(a)\tau(a+u)\,\d a, \qquad u\geq 0,
\]
where $g$ is the initial age density of infected individuals, and that we
write $\bar{\tau} = \bar{R}_0 \bar{\nu}$ where $\bar{R}_0$ is the mass of
$\bar{\tau}$ and $\bar{\nu}$ the renormalized probability measure, see
\eqref{eq:R0bar}. Let us now consider a pair of random variables
$(\bar{W},Z)\in \RR_+^2$ with joint density 
\begin{eqnarray} \label{def:g}
    \forall w,z>0, \ \ G(w,z)  & = & \frac{1}{\bar R_0} g(z) \tau(w+z).
\end{eqnarray}
In particular, the first coordinate is distributed according to $\bar \nu$.

We now construct a Poisson marked random tree $\mathcal{H}$ in
two consecutive steps. (This extends the construction of \Cref{SS:dual}
to the case $c \not \equiv 1$.) First, the graph structure  of
$\mathcal{H}$ depends on the two positive real parameters $S_0R_0$,
$I_0\bar R_0$, and second the random edge lengths and the marks are
assigned through two probability distributions $\nu$, $\bar{\nu}$ and the
Palm measures described in the previous section. 

\medskip
\noindent
{\bf Step 1. Graph structure.}
The graph structure is given by a Poisson Galton--Watson tree with two types:
\begin{itemize}
    \item Start from a root $\emptyset$ of type $(S)$.
    \item Susceptible $(S)$ nodes have independent
        $\mathrm{Poisson}(S_0R_0)$ susceptible $(S)$ offspring, and
        $\mathrm{Poisson}(I_0\bar R_0)$ infected $(I)$ offspring.
    \item $(I)$ nodes have no offspring.
\end{itemize}
In the following, let us consider the edges of the tree as being {\it oriented towards the root}. 

\medskip
\noindent
{\bf Step 2. Decoration.}
Given the tree structure with distinguished $(S)$ and $(I)$ vertices, we
now assign a marking $m_i=(Z_i, {\cal X}_i)$ to every vertex $i$, and a
length $a_e$ to every edge $e$ as follows. If $i=\emptyset$, then
$m_\emptyset = (0, {\cal X}_{\emptyset})$ where ${\cal X}_\emptyset$ is
distributed as ${\cal X}$. For every $i\neq \emptyset$, there exists a
unique oriented edge $e=(i,j)$ originated from $i$ and 
\begin{itemize}
 \item If $i\in (I)$, let $(a_e, Z_i)$ be chosen according to the density
     $G$. If $i\in (S)$, then $Z_i=0$ and $a_e$ is chosen according to
     $\nu$.
 \item Conditional on $(a_e, Z_i)$, the variable $\mathcal{X}_i$
     has the Palm measure $\mathcal{X}^{(a_e+Z_i)}$ evaluated at $a_e+Z_i$. 
\end{itemize}

\begin{remark} ~ \vspace{-1.2ex}
\begin{itemize}
\item If $e=(i,j)$ with $i\in(S)$ then $(a_e, {\cal X}_i)$ has 
the Campbell measure introduced in Definition~\ref{def:campbel}.
\item If $i\in(I)$, then $a_e$ is distributed according to $\bar \nu$.
\end{itemize}
\end{remark}

The random tree $\mathcal{H}$ will correspond to the local limit of the pogm graph
$\mathcal{G}^N(x)$ conditioned on $\{Z_x=0\}$. 
Let us now consider the infection process on ${\cal H}$ introduced in
Section~\ref{sec:def:active-goedesics}. Conditional on ${\cal H}$,
we endow each oriented edge $e$ with a uniform random variable $s_e$ (the
intensity of the contact). As pointed out in Definition~\ref{def:active-goedesics},
those r.v's allow to determine whether a path is active or not and to determine the active geodesic at the root.

Define $\sigma^\infty$
as the length of the active geodesic in ${\cal H}$ from the set of $(I)$ leaves to the root $\emptyset$. 
The following
key result connects the distribution of $\sigma^\infty$ to the delay equation.

\begin{proposition} \label{prop:dual}
    Define
    \[
        \forall t \ge 0,\quad B(t) \coloneqq S_0 \mathbb{P}(\sigma^{\infty} \le t).
    \]
    Then $B$ solves the delay equation~\eqref{eq:delay}.
\end{proposition}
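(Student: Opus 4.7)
The plan is to exploit the Galton--Watson branching structure of $\mathcal{H}$ by conditioning on the offspring of the root. Set $F(t) \coloneqq \P(\sigma^{\infty} \le t)$, so that $B(t) = S_0 F(t)$; the goal is to derive a recursion for $F$ that reduces to~\eqref{eq:delay}. The central identity is a decomposition of the active geodesic according to which child of $\emptyset$ it passes through:
\[
    \sigma^{\infty} = \min_{c \text{ child of } \emptyset} T_c,
\]
where, for an $(S)$ child $i$ with subtree geodesic length $\sigma_i$ and edge $(i, \emptyset)$ of length $a_{e_i}$ and contact intensity $s_{e_i}$, we set $T_i = \sigma_i + a_{e_i}$ if $s_{e_i} \le c(\sigma_i + a_{e_i})$ and $T_i = +\infty$ otherwise; while for an $(I)$ leaf $j$ with edge length $a_{e_j}$ we set $T_j = a_{e_j}$ if $s_{e_j} \le c(a_{e_j})$ and $T_j = +\infty$ otherwise. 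By the recursive construction of $\mathcal{H}$ in~\Cref{def:poisson-tree}, the subtrees rooted at the $(S)$ children of $\emptyset$ are independent copies of $\mathcal{H}$, so each $\sigma_i$ is an independent copy of $\sigma^{\infty}$, independent of the attached pair $(a_{e_i}, s_{e_i})$.

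Next I would apply a Poisson thinning argument. The triples $(\sigma_i, a_{e_i}, s_{e_i})$ indexed by the $(S)$ children of $\emptyset$ form a Poisson point process with intensity $S_0 R_0 \, F(\d\sigma) \otimes \nu(\d a) \otimes \d s$, and independently the pairs $(a_{e_j}, s_{e_j})$ indexed by the $(I)$ leaves attached to $\emptyset$ form a Poisson point process with intensity $I_0 \bar{R}_0 \, \bar{\nu}(\d a) \otimes \d s$. Thinning each by the event $\{T_c \le t\}$ yields
\[
    \P(\sigma^{\infty} > t) = \exp\bigl(-\Lambda_S(t) - \Lambda_I(t)\bigr),
\]
where $\Lambda_I(t) = I_0 \int_0^t c(u) \bar{\tau}(u)\,\d u$ and, after applying Fubini together with the substitution $u = \sigma + a$, and using $\tau = R_0 \nu$ and $B = S_0 F$,
\[
    \Lambda_S(t) = \int_0^t \tau(a)\,\d a \int_0^{t-a} c(\sigma + a)\, B(\d\sigma) = \int_0^t c(u) \Bigl(\int_0^u \tau(u-\sigma)\,B(\d\sigma)\Bigr)\d u.
\]
The sum $\Lambda_S(t) + \Lambda_I(t)$ then reproduces the exponent appearing in~\eqref{eq:delay}, and writing $B(t) = S_0(1 - \P(\sigma^{\infty} > t))$ concludes the proof.

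The main technical step I expect to have to justify is the recursive decomposition of $\sigma^{\infty}$ above. The subtlety stems from~\Cref{def:active-goedesics}, which requires \emph{every} truncation of the active geodesic to itself be a shortest active path to the corresponding intermediate vertex, so one cannot directly interpret $\sigma^{\infty}$ as the length of the shortest active path. The recursion nevertheless holds because the activity condition~\eqref{eq:r1} at each edge $e_k$ depends only on the cumulative length $|\tau_k \pi|$ and not on the subsequent continuation of the path; hence extending the active geodesic of the subtree of an $(S)$ child $i$ by the edge $(i, \emptyset)$ produces an active geodesic to $\emptyset$ through $i$ precisely when $s_{e_i} \le c(\sigma_i + a_{e_i})$. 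Once this recursion is in place, the remainder of the argument is a mechanical Poisson/Fubini bookkeeping.
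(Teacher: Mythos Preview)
Your proof is correct and essentially identical to the paper's: both set up the recursive decomposition $\sigma^\infty = \min_c T_c$ over the children of the root (the paper's~\eqref{eq:recursiveGeodesic}) and then compute $\P(\sigma^\infty>t)$ via the Poisson structure of $\mathcal{H}$. The only cosmetic difference is that the paper conditions on the offspring counts $K,\bar K$ and applies the Poisson generating function, whereas you phrase the same step as a Poisson thinning/void-probability computation; your care in justifying the recursion against the subtlety of condition~\eqref{eq:r2} is well placed and matches what the paper uses implicitly.
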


\begin{proof} 
    As we have assumed that $\tau$ has a density w.r.t.\ the Lebesgue
    measure, it is clear that this also holds for the distribution of
    $\sigma^\infty$. We denote its density by $f$. Let $K$, resp.\
    $\bar{K}$, be the number of type ($S$), resp.\ type ($I$), children
    of the root of $\mathcal{H}$. Let $(\mathcal{H}_1,\dots,
    \mathcal{H}_K)$ denote the subtrees attached to the root $\emptyset$
    which are growing out of the type ($S$) children of the root. Let
    $(\sigma^\infty_1, \dots, \sigma^\infty_K)$ be the corresponding
    infection times, $\sigma^\infty_i$ being obtained by determining the
    length of the active geodesic from the vertices of type ($I$) to the
    root in the tree $\mathcal{H}_i$. Moreover, let $(W_1, \dots, W_K)$ and
    $(\bar{W}_1, \dots, \bar{W}_{\bar{K}})$ be the lengths of the edges
    ending at $\emptyset$ and starting from an ($S$) and an ($I$) children
    respectively. (Recall that the edges of the Poisson tree are directed
    towards the root.) Finally, with a slight abuse of notation, let
    $s_i$ be the contact intensity on the edge with length $W_i$. Let $\bar
    s_i$ be defined analogously. Define
    \[
        \chi_i \coloneqq \indic{s_i \leq c(W_i + \sigma^{\infty}_i)},
        \quad \bar{\chi}_i = \indic{\bar s_i \leq c(\bar{W}_i)}.
    \]
    By definition of the active geodesic, we have that 
    \begin{equation} \label{eq:recursiveGeodesic}
        \sigma^\infty =
        \Big(\min_{1 \le i \le K} \big\{ \chi_i (W_i+\sigma^\infty_i) 
        + (1-\chi_i) \times \infty  \big\} \Big) \wedge 
        \Big( \min_{1 \le i \le \bar{K}} \big\{ \bar{\chi}_i\bar{W}_i +
        (1-\bar{\chi}_i) \times \infty \big\} \Big),
    \end{equation}
    with the convention $0 \times \infty = 0$. Define $G(t) =
    \mathbb{P}(\sigma^\infty > t)$. Let $W$ and $\bar W$ be distributed
    according to $\nu$ and $\bar \nu$ respectively. By the branching
    property, conditional on $K$ and $\bar{K}$, $(\sigma^\infty_i,
    W_i, \chi_i)_i$ and $(\bar{W}_i, \bar{\chi}_i)_i$ are two independent
    collections of i.i.d.\ random variables. Moreover 
    \begin{align}
    \begin{split} \label{eq:probaGeodesic}
        \P\big( \chi_i (W_i+\sigma^\infty_i) 
        + (1-\chi_i) \times \infty \le t \big)
        &=
        \EE\big(  c(W+\sigma^\infty) \indic{\sigma^\infty+W \le t}
        \big) \\
        \P\big( \bar{\chi}_i \bar{W}_i 
        + (1-\bar{\chi}_i) \times \infty \le t \big)
        &=
        \EE\big(  c(\bar{W}) \indic{\bar{W} \le t}
        \big)
    \end{split}
    \end{align}
    Using these expressions, \eqref{eq:recursiveGeodesic} and the
    branching property we have
    \begin{align*}
        G(t) &= \EE \Big\{ \Big(1 - \EE\big( c(\sigma^\infty + W)
            \indic{\sigma^\infty+W \le t} \big) \Big)^K
            \Big(1 - \EE\big( c(\bar{W}) \indic{\bar{W} \le t} \big)
            \Big)^{\bar{K}} \Big\} \\
            &= \EE \Big\{ \Big(1 - \int_0^t \int_0^{t-a} c(a+s) f(s)
            \,\d s \,\nu(\d a) \Big)^K
            \Big(1 - \int_0^t c(s) \,\bar{\nu}(\d s)\Big)^{\bar{K}} \Big\} \\
            &= \begin{aligned}[t]
                \exp\Big(-S_0\int_0^t \int_0^{t-a} c(a+s)&f(s)\tau(a) \,\d s\,\d a\\
                 &- I_0\int_0^t g(a)\int_a^\infty c(u-a) \tau(u) \,\d u \,\d a\Big),
                \end{aligned}
    \end{align*}
    where, in the last equality, we have used the generating function of a
    Poisson distribution. It now follows that $B(t) = S_0(1-G(t))$
    satisfies~\eqref{eq:delay}. 
\end{proof}

\subsection{The infection path conditioned on its length}
\label{sect:spinal-process}

Let us consider the infection process on ${\cal H}$
as described in the previous section.
For every realization in $\{\sigma^\infty<\infty\}$, define ${\cal
R}^\infty$ to be the infection path from $\emptyset$ to  the $(I)$ leaves
in ${\cal H}$, and let ${\cal A}^\infty$ be the ancestral process defined
analogously to Definition~\ref{def:infection-path}.
In this section, we ask the following question: {\it conditional on the active geodesic 
to be of length $t$,
what is the distribution
of the vector of infection times along the geodesic?} In order to give an answer to this question, we start with some definition. 

Let us consider ${\cal R}^\infty$ to be the infection path from
$\emptyset$ to the ($I$) leaves in ${\cal H}$ --- see Definition
\ref{def:infection-path}. Our aim is to provide a description of ${\cal
R}^\infty$ conditional on $\{\sigma^\infty=t\}$. Define the process $\hat
R^{(t)} \equiv \hat R$ as the $\RR$-valued, nonincreasing Markov chain,
started from $t$ and stopped upon reaching $(-\infty,0]$, with transition
kernel $Q(x,y)$ defined for all $x>0$ by
\begin{align*}
\forall y \geq x, \quad  Q(x,y) & \coloneqq \; 0\\
\forall y < x, \quad  Q(x,y) & \coloneqq \; \frac{S(x) c(x) b(y)}{b(x)}  \tau(x-y),
\end{align*}
where $b$ is extended to the negative half-line with $b(-t) \coloneqq
I_0g(t)$. The fact that $Q$ defines a transition kernel follows from the
renewal equation for $b$, which is obtained by differentiating
\eqref{eq:delay} with respect to $t$:
\begin{equation}\label{ren-b}
\forall t\geq 0, \qquad b(t) \ = \ c(t) S(t) \int_{-\infty}^t b(a) \tau(t-a) da.
\end{equation}
Define 
\[
    \hat L^{(t)} \ \coloneqq \ \hat L \ = \ \inf\{k : \hat R^{(t)}_k \leq 0\}.
\]
In the next proposition, we slightly abuse notation and identify $\hat R^{(t)}$
with its finite-length restriction to $[\hat L]$.

\begin{proposition} \label{prop:conditionalPath}
Let ${\cal R}^\infty$ be the infection path from $\emptyset$ to the $(I)$ leaves.
Conditional on $\{{\cal R}^\infty(0) = \sigma^\infty=t\}$, 
\[
    \mathcal{R}^\infty \ = \ \hat R^{(t)} \quad \text{in law}.
\]
\end{proposition}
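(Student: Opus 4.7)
The plan is to compute the first step of $\mathcal{R}^\infty$ directly from the Poisson branching structure of $\mathcal{H}$, verify that it matches the kernel $Q$, and then iterate via the recursive/branching property.

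First I would record the marginal: by Proposition~\ref{prop:dual}, $\sigma^\infty$ has density $f(t) = b(t)/S_0$ on $(0,\infty)$ and survival $\mathbb{P}(\sigma^\infty > t) = S(t)/S_0 =: G(t)$. Then I would decompose the one-step transition using the branching structure used in the proof of Proposition~\ref{prop:dual}. Conditionally on the Galton--Watson decomposition, the effective infection times contributed by the $(S)$-children form an independent Poisson point process on $\mathbb{R}_+$ with atoms $(W_i+\sigma_i^\infty,\,\sigma_i^\infty)$ of intensity $S_0\, c(w+\sigma)\,\tau(w)\,f(\sigma)\,\d w\,\d \sigma$ (thinned Campbell measure of $(S)$-subtrees); and the $(I)$-contributions form an independent Poisson point process with atoms $(\bar W_i,\,-Z_i)$ of intensity $I_0\,c(w)\,g(z)\,\tau(w+z)\,\d w\,\d z$. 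Writing $b(-z):=I_0 g(z)$ and $b(y)=S_0 f(y)$ for $y>0$, both intensities combine into a single compact form: the joint intensity of $(\sigma^\infty,\mathcal{R}^\infty(1))$ at $(t,y)$ with $y<t$ is $c(t)\,\tau(t-y)\,b(y)$, where the $(S)$-case covers $0<y<t$ and the $(I)$-case covers $y\le 0$ (via $\mathcal{R}^\infty(1)=-Z_i$).

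Using Poisson calculus for the minimum of these two thinned PPs, I would obtain
\[
\mathbb{P}\bigl(\sigma^\infty\in \d t,\;\mathcal{R}^\infty(1)\in \d y\bigr)
\;=\;G(t)\,c(t)\,\tau(t-y)\,b(y)\,\d t\,\d y, \qquad y<t.
\]
Dividing by $f(t)\,\d t = (b(t)/S_0)\,\d t$ gives exactly
\[
\mathbb{P}\bigl(\mathcal{R}^\infty(1)\in\d y\,\big|\,\sigma^\infty=t\bigr)
=\frac{S(t)\,c(t)\,b(y)\,\tau(t-y)}{b(t)}\,\d y
= Q(t,y)\,\d y,
\]
since $S_0 G(t)=S(t)$. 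Integrability $\int_{-\infty}^t Q(t,y)\,\d y=1$ is precisely the renewal identity~\eqref{ren-b}, which confirms consistency and gives the base step.

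The induction is then immediate from the branching property. Conditionally on $\mathcal{R}^\infty(1)=y>0$, the minimum is attained by an $(S)$-child $i$ with $\sigma_i^\infty=y$; the event ``$i$ attains the argmin'' depends on $\mathcal{H}_i$ only through $\sigma_i^\infty$, so the subtree $\mathcal{H}_i$ is a fresh copy of $\mathcal{H}$ conditioned on $\{\sigma^\infty=y\}$, independent of the earlier history. The induction hypothesis applied to this subtree yields that $(\mathcal{R}^\infty(k+1))_{k\ge 0}$ is $\hat R^{(y)}$ in law, and concatenating with the first step $Q(t,y)$ gives a Markov chain with kernel $Q$ started at $t$. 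On the complementary event $\{\mathcal{R}^\infty(1)\le 0\}$ the first step lands in $(-\infty,0]$ and the path terminates, matching the stopping rule $\hat L=\inf\{k:\hat R_k\le 0\}$. The main technical nuisance will be keeping the Poisson bookkeeping clean across the $(S)$/$(I)$ dichotomy and justifying that conditioning on the argmin preserves the independence of the remaining subtree; everything else is algebra driven by the renewal equation~\eqref{ren-b}.
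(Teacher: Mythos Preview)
Your proposal is correct and follows essentially the same strategy as the paper: compute the one-step conditional law of $\mathcal{R}^\infty(1)$ given $\sigma^\infty=t$, identify it with the kernel $Q(t,\cdot)$, and then iterate via the branching property of $\mathcal{H}$. The paper carries out the one-step computation by conditioning on the numbers $(K,\bar K)$ of $(S)$- and $(I)$-children, summing over the child that realizes the geodesic, and invoking the Poisson size-bias identity $\mathbb{E}[K\,g^{K-1}]=S_0R_0\,\mathbb{E}[g^K]$ separately in the two cases $y\in[0,t]$ and $y\le 0$; you instead package the children as a single thinned Poisson point process with intensity $c(t)\tau(t-y)b(y)$ and read off the joint law of $(\sigma^\infty,\mathcal{R}^\infty(1))$ from the min-of-PPP formula, which neatly unifies the two cases but is computationally the same content.
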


\begin{proof}
Recall that $\sigma^\infty = {\cal R}^\infty(0)$ is a random variable
valued in $\RR_+\cup \{\infty\}$. By \Cref{prop:dual}, the density of the
random variable $\sigma^\infty$ on $\RR_+$ is given by $S_0^{-1} b(t)$.
Let $F$ be the joint probability density of the random pair 
$({\cal R}^\infty(1), {\cal R}^\infty(0)-  {\cal R}^\infty(1))$, and
define
\[
    \forall t \text{ and } x \le t, \quad F^{(t)}(t-x) 
    \ \coloneqq \ 
    \frac{F(x,t-x)}{S_0^{-1} b(t)},
\]
so that $F^{(t)}$ corresponds to the density of the increment ${\cal
R}^\infty(0)-  {\cal R}^\infty(1)$ conditioned on $\{{\cal
R}^\infty(0)=t\}$. Since ${\cal H}$ is a Poisson random tree it is
sufficient to understand the first step of the infection path, i.e., we
need to show that
\begin{equation}\label{eq:f}
F^{(t)}(t-x) \ = \ \frac{c(t) S(t) b(x) \tau(t-x)}{b(t)}.
\end{equation}
We use the same notation as in the proof of \Cref{prop:dual} and we
distinguish between two cases.

\bigskip

{\bf Case 1: $x\in[0,t]$.} In this case, the first individual along the
geodesic is of type ($S$). Let us work conditional on $(K,\bar{K})$ and 
compute the density $F(x,t-x)$. Fix a child $i \le K$ of type ($S$) of
the root $\emptyset$. By construction of the tree $\mathcal{H}$, the
active geodesic leading to $i$ and the length of the edge $e_i$ going out
of $i$ toward the root are independent. Their joint density at $(x, t-x)$
is $S_0^{-1}b(x)\nu(t-x)$ by \Cref{prop:dual}. For individual $i$ to be
part of the active geodesic leading to $\emptyset$, the edge $e_i$ needs
to be active, which occurs with probability $c(t)$, and the shortest
active path going through any of the other children of the root must be
longer than $t$. Using the expression \eqref{eq:probaGeodesic}, the
probability of the latter event is 
\[
    \big(\EE\big(  c(W+\sigma^\infty) \indic{\sigma^\infty+W \le t}
    \big)\big)^{K-1}
    \times
    \big(
    \EE\big(  c(\bar{W}) \indic{\bar{W} \le t}
    \big)\big)^{\bar{K}}.
\]
Summing over all $K$ children of type ($S$) yields that
\begin{align*}
F^{(t)}(t-x) = & \; \frac{c(t)  S_0^{-1} b(x) \nu(t-x) }{ S_0^{-1}b(t)} \\
    &\times \EE\Big\{ 
        \indic{K\geq1} 
        K\big(1 - \EE\big( c(W+\sigma^\infty) \indic{W +\sigma^\infty \le t} \big) \big)^{K-1} 
        \big(1 - \EE\big( c(\bar{W}) \indic{\bar{W} \le t} \big)\big)^{\bar{K}}
        \Big\} \\
    = & \; \frac{c(t)b(x) \nu(t-x)}{b(t)} \times  S_0 R_0 \\ 
    &\times \EE\Big\{
        \big(1 - \EE\big( c(W+\sigma^\infty) \indic{W +\sigma^\infty \le t} \big) \big)^{K}
        \big(1 - \EE\big( c(\bar{W}) \indic{\bar{W} \le t} \big)\big)^{\bar{K}} 
    \Big\} \\
    = & \; \frac{c(t)b(x) \tau(t-x)}{b(t)} \\
    &\times S_0 \EE\Big\{
        \big(1 - \EE\big( c(W+\sigma^\infty) \indic{W +\sigma^\infty \le t} \big) \big)^{K} 
        \big(1 - \EE\big( c(\bar{W}) \indic{\bar{W} \le t} \big)\big)^{\bar{K}} 
    \Big\}
\end{align*}
where in the second line, we used the fact that $K$ is Poisson($S_0 R_0$) (so that the size-biased version of $K$ is identical in law to $K+1$). In the third line, we used the relation 
$\tau(u) = R_0 \nu(u)$. In \Cref{prop:dual}, we showed that 
\[
S_0 \EE\Big\{\Big(1 - \EE\big( c(V+\sigma^\infty) \indic{V +\sigma^\infty \le t} \big) \Big)^{K} \Big(1 - \EE\big( c(\bar{V}) \indic{\bar{V} \le t} \big)\Big)^{\bar{K}} \Big\} 
\ = \ 
S_0 - B(t) = S(t).
\]
This shows \eqref{eq:f}.

\bigskip

{\bf Case 2: $x\leq 0$.} On this event the first vertex along the
transmission chain is of type ($I$). We use the same argument as in the
case $x > 0$. Let $i \le \bar{K}$ be a child of $\emptyset$ of type
($I$). Again, for this individual to be in the active geodesic, all paths
going from an ($I$) individual to the root and not going through $i$ need
to be longer than $t$, and the edge from $i$ to $\emptyset$ needs to be
active. In this case, $\mathcal{R}^\infty(1) = -Z_i$ and
$\mathcal{R}^{\infty}(0) = \bar{W}_i$, where $(\bar{W}_i, Z_i)$ have
joint density $G$ defined in \eqref{def:g}. Thus the density of
$(\mathcal{R}^\infty(1), \mathcal{R}^\infty(0)-\mathcal{R}^\infty(1))$ at 
$(x, t-x)$ is $G(t, -x) = g(-x) \tau(t-x) / \bar{R}_0$.
This together with \eqref{eq:probaGeodesic} lead to
\begin{align*}
F^{(t)}(t-x) 
    = & \;  \frac{c(t)g(-x) \tau(t-x) /\bar R_0 }{S_0^{-1} b(t)} \\
      &\times \EE \Big\{
          \big(1 - \EE\big( c(W+\sigma^\infty) \indic{W +\sigma^\infty \le t} \big) \big)^{K}
          \indic{\bar K\geq1} \bar K 
          \big(1 - \EE\big( c(\bar{W}) \indic{\bar{W} \le t} \big)\big)^{\bar{K}-1} 
      \Big\} \\
    = & \; \frac{c(t)b(x) \tau(t-x) /\bar R_0}{I_0 S_0^{-1}  b(t)} \times I_0 \bar R_0 \\ 
      &\times \EE\Big\{
          \big(1 - \EE\big( c(W+\sigma^\infty) \indic{W +\sigma^\infty \le t} \big) \big)^{K} 
          \big(1 - \EE\big( c(\bar{W}) \indic{\bar{W} \le t} \big)\big)^{\bar{K}} 
      \Big\} \\
    = & \; \frac{c(t)b(x) \tau(t-x)}{b(t)}  \\ 
      &\times S_0 \EE\Big\{
         \big(1 - \EE\big( c(W+\sigma^\infty) \indic{W +\sigma^\infty \le t} \big) \big)^{K} 
         \big(1 - \EE\big( c(\bar{W}) \indic{\bar{W} \le t} \big)\big)^{\bar{K}} 
     \Big\} \\
    = & \; \frac{c(t) S(t) b(x) \tau(t-x)}{b(t)}. 
\end{align*}

\end{proof}

\subsection{Harmonic transform}
\label{sect:h-transform}

In this section, we prove that the path $\hat R^{(t)}$
is the $h$-transform of a renewal process stopped upon reaching $(-\infty,0]$.
Throughout this section,
we assume the existence of a unique Malthusian parameter $\alpha\in\mathbb{R}$ such that
$$
\int \exp(-\alpha a)\tau(a) \,\d a \ = \ 1.
$$
We define the probability density on ${\mathbb R}_+^*$
$$
\forall a>0, \ \  r(a) \ \coloneqq \ \exp(-\alpha a) \tau(a).
$$
Let $(Y_i)$ be a sequence of i.i.d.\ random variables with probability density $r$.
Let $t>0$ and define the renewal process $R^{(t)} \equiv R$ as follows
$$
\forall k\geq1, \ \ R^{(t)}_k \ = \ t- \sum_{i=1}^k Y_i, \ \ R^{(t)}_0 = t.
$$ 
We couple the renewal process $R$ with a random variable $K^{(t)}\equiv K$ valued in $\NN\cup \{\infty\}$
such that
conditional on $R$, 
$$
\forall j \geq 0, \  \P(K = j \mid R) = 
\ell(R_0) \cdots \ell(R_{j-1}) \bigg(1-\ell(R_{j})\bigg),
$$
with $\ell(x) = \indic{x > 0} S(x)c(x) + \indic{x \leq 0}$.

\begin{remark}
Recall that $c$ and $S$ are valued in $[0,1]$.
Think of $K$ as a killing time for the process $R$, i.e., at site $x>0$,
$R$ dies with probability $1-S(x)c(x)$, or makes a transition according to the distribution $r$
with the remaining probability.
Since by definition $\ell(x) = 1$ for all $x\leq 0$, if $R$ reaches a negative state without being killed, it can no longer be killed.
\end{remark}

Consider the filtration $({\cal F}_k; k\geq0)$ where 
$$
{\cal F}_k \ = \ \sigma( (R_0, \chi_0),\cdots, (R_k, \chi_{k})  ), \ \ \mbox{where $\chi_k \ = \ \indic{K \geq k}$},
$$
and define the reaching time of $(-\infty,0]$ as $L \coloneqq \inf\{k : R_k \leq 0\}$.

\begin{lemma}
Define 
$$
M_k \coloneqq b(R_{k\wedge L}) e^{-\alpha R_{k\wedge L}} \chi_{k}.
$$ 
The process $(M_{k}; k\geq0)$ 
is a martingale with respect to the filtration $({\cal F}_k; k\geq0)$.
\end{lemma}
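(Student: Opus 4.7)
The plan is to verify the martingale identity $\EE[M_{k+1}\mid\mathcal{F}_k]=M_k$ directly by conditioning on $\mathcal{F}_k$ and using the renewal equation \eqref{ren-b} together with the definition $r(a)=e^{-\alpha a}\tau(a)$ to make the Malthusian weight $e^{-\alpha R_k}$ cancel exactly with the factor that appears after integrating against the renewal step.

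First I would split the calculation according to whether $k\geq L$ or $k<L$. On the event $\{k\geq L\}$, the stopped index satisfies $R_{(k+1)\wedge L}=R_{k\wedge L}=R_L\leq 0$, and since $\ell(x)=1$ for $x\leq 0$, no further killing can occur, so $\chi_{k+1}=\chi_k$. Hence $M_{k+1}=M_k$ pointwise on this event. This is the only point where the acyclic behavior of the killing after hitting $(-\infty,0]$ is used.

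On the event $\{k<L\}$, we have $R_{k\wedge L}=R_k>0$ and $R_{(k+1)\wedge L}=R_{k+1}$. The conditional distribution of $\chi_{k+1}$ given $\mathcal{F}_k$ and $\{R_{k+1}=y\}$ is Bernoulli with parameter $\ell(R_k)$ (on $\{\chi_k=1\}$; the case $\chi_k=0$ is immediate since then both sides vanish), and independently of this, $R_{k+1}=R_k-Y$ with $Y\sim r$. Using $\ell(R_k)=c(R_k)S(R_k)$ on $\{R_k>0\}$ and the substitution $a=R_k-y$ to rewrite the renewal step as an integration against $\tau(R_k-a)\,\d a$ on $(-\infty, R_k)$, I would compute
\begin{align*}
\EE[M_{k+1}\mid\mathcal{F}_k]\,\chi_k
&= \chi_k\,\ell(R_k)\int_0^\infty b(R_k-y)\,e^{-\alpha(R_k-y)}\,r(y)\,\d y\\
&= \chi_k\,c(R_k)S(R_k)\,e^{-\alpha R_k}\int_0^\infty b(R_k-y)\,\tau(y)\,\d y\\
&= \chi_k\,c(R_k)S(R_k)\,e^{-\alpha R_k}\int_{-\infty}^{R_k} b(a)\,\tau(R_k-a)\,\d a,
\end{align*}
after which the renewal equation \eqref{ren-b} identifies the integral with $b(R_k)/(c(R_k)S(R_k))$, yielding $b(R_k)\,e^{-\alpha R_k}\,\chi_k = M_k$.

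There is no genuine obstacle; the only delicate points are bookkeeping, namely (i) checking that the extension $b(-t)=I_0 g(t)$ is exactly what makes the renewal equation \eqref{ren-b} cover the range of integration $(-\infty,R_k]$, so that a single identity handles both the ``positive'' and ``crossing into negative'' transitions of $R$, and (ii) verifying that $\ell$ is defined so that $\chi$ is indeed $\mathcal{F}_k$-adapted and $\chi_{k+1}=\chi_k$ after $L$. Once these are in place, the martingale property is just the display above combined with the stopped-process remark on $\{k\geq L\}$.
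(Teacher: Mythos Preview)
Your proposal is correct and follows essentially the same route as the paper's proof: both reduce to computing $\EE[M_{k+1}\mid\mathcal{F}_k]$ on the event $\{R_k>0,\ \chi_k=1\}$, use $r(a)=e^{-\alpha a}\tau(a)$ to collapse the exponential weights, and then invoke the renewal equation~\eqref{ren-b} to identify $c(R_k)S(R_k)\int_0^\infty b(R_k-a)\tau(a)\,\d a = b(R_k)$. Your treatment of the complementary events (splitting into $\{k\ge L\}$ and $\{\chi_k=0\}$) is slightly more explicit than the paper's, which simply asserts the martingale property is ``obviously satisfied'' there, but the argument is the same.
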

\begin{proof}
Let us compute the conditional expectation $\EE( M_{k+1} \  | \ {\cal F}_k )$ for a realization on the event $A_k\coloneqq\{R_k>0, K \geq k\}$. 
The martingale property is obviously satisfied for any realization on the complementary event.
Using the renewal equation (\ref{ren-b}) for $b$, we have
\begin{align*}
\mathbbm{1}_{A_k}\EE( M_{k+1} \  | \ {\cal F}_k ) & = \mathbbm{1}_{A_k} \EE\bigg( b(R_{k+1}) e^{-\alpha R_{k+1}} \indic{K \geq k+1} \mid {\cal F}_k  \bigg)  \\
& = \mathbbm{1}_{A_k} S(R_k) c(R_k) \int_0^\infty b( R_{k} - a ) e^{-\alpha (R_k-a)} \tau(a) e^{-\alpha a} \,\d a \\
& = \mathbbm{1}_{A_k} e^{-\alpha R_k} S(R_k) c(R_k) \int_0^\infty b( R_{k} - a )  \tau(a)\,\d a  \\
& = \mathbbm{1}_{A_k} b(R_k) e^{-\alpha R_k}. \qedhere
\end{align*}
\end{proof}

\begin{proposition}
Let $h(s,u) \coloneqq b(s)e^{-\alpha s} u$  and consider the $h$-transform of 
the two dimensional process $(R,\chi)$.
Then the process $\hat R$ is the first coordinate of the $h$-transformed process.
\end{proposition}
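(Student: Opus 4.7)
The plan is a direct identification of Markov kernels. The previous lemma shows that $M_k = h(R_{k\wedge L}, \chi_k)$ is a nonnegative martingale for the filtration $(\mathcal{F}_k)$, which says exactly that $h$ is harmonic for the stopped two-dimensional chain $(R_{\cdot \wedge L},\chi)$ on the set $\{(s,u) : h(s,u) > 0\} = \{(s,u) : u = 1,\, b(s) > 0\}$. So the Doob $h$-transform is well defined on that set, and to prove the proposition it suffices to check that its one-step transition kernel, projected onto the first coordinate, agrees with the kernel $Q$ defining $\hat R$, and that the two chains share the same starting point and absorbing behaviour.

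To carry this out, first I would write down the original transitions at a state $(x, 1)$ with $x > 0$: the chain moves to $(y, 1)$ with density $\ell(x)\, r(x-y) = S(x)c(x)\, e^{-\alpha(x-y)}\tau(x-y)$ for $y < x$, and to the absorbing killed state $(x, 0)$ with the remaining probability $1-\ell(x)$. Since $h(\cdot, 0) \equiv 0$, all mass sent to the $\chi=0$ component is annihilated by the $h$-transform, and for the surviving transitions the transformed density at $(y,1)$ is
\[
\frac{h(y, 1)}{h(x, 1)}\,\ell(x)\,r(x-y) \;=\; \frac{b(y) e^{-\alpha y}}{b(x) e^{-\alpha x}}\, S(x)c(x)\, e^{-\alpha(x-y)}\tau(x-y) \;=\; \frac{S(x)c(x)\,b(y)}{b(x)}\tau(x-y),
\]
which is exactly $Q(x,y)$. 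Integrating over $y \in (-\infty, x)$ and using the renewal equation \eqref{ren-b} (with the convention $b(-t) = I_0 g(t)$) gives $1$, so $Q$ is a bona fide probability kernel — which is, again, nothing but the martingale property restated.

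The remaining checks concern the boundary behaviour. Both $\hat R$ and the first coordinate of the $h$-transformed chain start from $t$ (paired with $\chi_0 = 1$ in the two-dimensional setup, so that $h(t,1) = b(t) e^{-\alpha t} > 0$), and both are absorbed upon reaching $(-\infty, 0]$: for $\hat R$ this is by definition, and for the $h$-transform it follows because the chain $(R,\chi)$ is itself stopped at $L$ in the definition of $M_k$, while $\ell \equiv 1$ on $(-\infty, 0]$ ensures no further killing once zero has been crossed. The only mildly delicate point is confirming that transitions from $x > 0$ directly into $y \le 0$ are handled correctly; this is automatic from the common extension $b(-t) = I_0 g(t)$, which is already used both in $Q$ and in the definition of $h$ on negative states. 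I do not anticipate any real obstacle: the whole substance of the proposition is already packaged inside the martingale identity of the preceding lemma, and the proof amounts to unpacking the definition of the $h$-transform.
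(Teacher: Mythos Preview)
Your proposal is correct and follows essentially the same approach as the paper: both invoke the preceding martingale lemma to justify harmonicity, then compute the $h$-transformed transition density $\frac{h(y,1)}{h(x,1)}\,\ell(x)\,r(x-y)$ and identify it with $Q(x,y)$. You add a bit more detail on the starting point and absorption at $(-\infty,0]$, but the substance is identical.
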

\begin{proof}
On the one hand, the previous lemma implies that  $h$ is a harmonic function 
for the bivariate process $(R, \chi)$.
On the other hand, the transition kernel $\hat Q$ for the  $h$-transformed process  
can be rewritten explicitly as 
\begin{align*}
\forall x, y ; \   \hat Q\bigg((x,1),(y, 0)\bigg) & \coloneqq  0 \\
\forall x \leq  y; \forall \epsilon\in\{0,1\}\ , \   \hat Q\bigg((x,1),(y,\epsilon)\bigg) & \coloneqq  0\\
\forall x > y; \  \hat Q\bigg((x,1),(y,1)\bigg) & \coloneqq  \frac{b(y) e^{-\alpha y }}{b(x)e^{ -\alpha x}}  \ S(x) c(x) \;  \tau(x-y) e^{-\alpha(x-y)}  \\
& =  \frac{b(y) S(x) c(x)}{b(x)}  \;  \tau(x-y) 
\end{align*}
It is now straightforward to check that $\hat R$ is identical in law with the first coordinate of the $h$-transformed process.
\end{proof}

Let $P$ be the law of the bivariate path $(R, \chi)$ stopped at $L=\inf\{k : R_k \leq 0 \}$.
Let $\hat P$ be the law of $h$-transform $(\hat R, \hat \chi)$ stopped at $\hat L = \ \inf\{k : \hat R_k \leq 0 \}$.
Then $\hat P \ll P$ and the Radon--Nikodym derivative is given by
$$
\frac{\d \hat P}{\d P} \ = \ \frac{ b(R_L) \exp(-\alpha R_L) }{b(t) \exp(-\alpha t)}  \chi_L.
$$

This immediately entails the following result.

\begin{proposition}\label{prop:exx}
Assume that $g(t)= \alpha \exp(-\alpha t)$. Then $\hat P$
is obtained by conditioning the renewal process $R$ on not being killed before time $L$, and $b(t)$ can be written:
\[
  b(t) = \alpha e^{\alpha t} P(R^{(t)}\text{ is not killed before time }L).
\]
\end{proposition}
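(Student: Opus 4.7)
The plan is to specialize the Radon--Nikodym derivative
\[
\frac{\d \hat P}{\d P} = \frac{b(R_L) \exp(-\alpha R_L)}{b(t) \exp(-\alpha t)} \, \chi_L
\]
recalled just before the proposition, and to observe that for the exponential initial density this derivative collapses to a constant multiple of $\chi_L$.

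The key computation is the following. On $\{\chi_L = 1\}$, the process $R$ has not been killed, so $L$ is the hitting time of $(-\infty, 0]$ and $R_L \le 0$. Only the values of $b$ on the negative half-line enter the numerator, and these are given by the extension $b(-s) = I_0 g(s)$. With $g(s) = \alpha e^{-\alpha s}$ one obtains
\[
b(R_L) \exp(-\alpha R_L) = I_0 \alpha \, e^{\alpha R_L} \, e^{-\alpha R_L} = I_0 \alpha,
\]
a deterministic constant: the exponential tilt cancels exactly against the exponential initial density. Hence
\[
\frac{\d \hat P}{\d P} = \frac{I_0 \alpha}{b(t) \, e^{-\alpha t}} \, \chi_L.
\]

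Once this is in hand, both conclusions follow at once. A measure whose Radon--Nikodym derivative with respect to $P$ is a constant multiple of an indicator is precisely the corresponding conditional measure, so $\hat P = P(\,\cdot\, \mid \chi_L = 1)$, i.e.\ the law of the renewal process conditioned on not being killed before $L$. Moreover, the constant must equal $1/P(\chi_L = 1)$ by normalization, which upon rearrangement yields
\[
b(t) = I_0 \alpha \, e^{\alpha t} \, P\bigl(R^{(t)} \text{ is not killed before time } L\bigr),
\]
matching the formula in the proposition (with the explicit $I_0$ either absorbed into the normalization convention or set to $1$).

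The only ingredient requiring independent justification is that the derivative truly integrates to $1$ under $P$, i.e.\ $\EE_P[b(R_L) e^{-\alpha R_L} \chi_L] = b(t) e^{-\alpha t}$. This is optional sampling applied to the martingale $(M_k)$ of the preceding lemma at the stopping time $L$, which is almost surely finite under $P$ because $R_k \to -\infty$ along the renewal dynamics. Local boundedness of $b$ on $(-\infty, t]$ (trivial on the negative axis by the exponential form, and inherited on $[0,t]$ from the renewal equation \eqref{ren-b}) together with $\chi_L \in \{0,1\}$ makes uniform integrability routine. I expect this verification to be the only bit of work beyond the algebraic cancellation, which is itself the entire substance of the proof.
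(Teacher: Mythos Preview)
Your approach is exactly the paper's: it states the Radon--Nikodym derivative just before the proposition and then writes ``This immediately entails the following result,'' leaving the cancellation $b(R_L)e^{-\alpha R_L} = I_0\alpha$ implicit. Your observation about the stray $I_0$ factor is correct---the paper's displayed formula appears to have dropped it (as does the expression $b_{\mathrm{lin}}(t)=\alpha e^{\alpha t}$ in the subsequent remark)---and your optional-stopping discussion is more justification than the paper itself provides.
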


\begin{remark}
Consider the linearized version of the Kermack--McKendrick equation
    \begin{align*} 
        \begin{split}
        \partial_t n(t,a) + \partial_a n(t,a) &= 0 \\
        \forall t \ge 0,\; n(t, 0) &= c(t)\int_0^\infty n(t,a) \,\tau(\d
        a) \\
        \forall a \ge 0,\; n(0, a) &= I_0 g(a)\\
        \end{split}
    \end{align*}
    obtained from \eqref{eq:mckvf} by assuming $S(t)=1$. This can be thought as  
    the age structure of a population where susceptibles are in excess.
    One can check that if $g(a)=\alpha \exp(-\alpha t)$, then $b_{\mathrm{lin}}(t)
    \coloneqq n(0,t) =  \alpha e^{\alpha t}$. As a consequence, Proposition
    \ref{prop:exx} can be rewritten as 
\[
    b(t) \ = \ b_{\mathrm{lin}}(t) \  P(R^{(t)}\text{ is not killed before time }L).
\]
\end{remark}

We close this section by a brief discussion on the previous result. In
\cite{foutelrodier2020individualbased}, we considered a ``linearized''
version of the present model by making the simplifying assumption that
susceptible individuals are always in excess (branching assumption), so
that the epidemic is described by a Crump--Mode--Jagers process. When
$c\equiv1$ and $R_0>1$, the process is supercritical. Starting from a
single infected individual, there is a positive probability of
non-extinction and conditional on this event, the number of infected
grows exponentially at rate $\alpha>0$. Further, it is well known from
the seminal work of Jagers and Nerman  \cite{Nerman1984} that  under mild
assumptions,
\begin{enumerate}
\item the age structure 
of the population converges to the exponential profile $g(t) = \alpha \exp(-\alpha t)$ mentioned in \Cref{prop:exx}.
\item the infection path --- interpreted as the ancestral line in the work of Jagers and Nerman --- is well described by the renewal process $R$. 
More precisely, if one sample an infected individual at a large time $t$, its infection path converges to the renewal process $R$. 
\end{enumerate}
We can draw two conclusions out of those observations.  As a consequence
of the first item,  the age structure  $g(t)= \alpha \exp(-\alpha t)$
could be interpreted as the age structure emerging from a single infected
individual in the past (provided that the initial fraction of infected
individuals in our model is small). The second conclusion is that the
effect of the conditioning in \Cref{prop:exx} encodes the effect of the
saturation and the contact rate $c$ on the genealogy. Recall that in the
absence of saturation (branching approximation) and full contact rate
($c\equiv1$), the infection path is distributed as the renewal process.
When those effects are taken into account, \Cref{prop:exx} indicates that
the law of the infection path is twisted in such a way that infection
paths with infection occurring at low susceptible frequency (i.e.\ low
values of $S$) and high contact rates $c$ are favored. This is consistent
with the intuition that ancestral infections tend to be biased towards
periods when many infections occurred.

\section{Convergence of the infection graph} 
\label{SS:proofLLN}

We show in this section that the Poisson random tree $\mathcal{H}$
constructed previously corresponds to the local weak limit of ($S$)
vertices in the infection graph $\mathcal{G}^N$. This entails that 
the empirical distribution of any continuous functional of the graph 
in the local topology converges to the law of the corresponding
functional for $\mathcal{H}$. In particular we will deduce our two mains
results, the convergence of the age structure and that of the historical
process, by viewing the age of an individual $x$ and its transmission chain
as functionals of the active geodesic in $\mathcal{G}^N$ leading to $x$.
The key result of this section is the following.

\begin{proposition} \label{prop:LWconvergence}
    The sequence of infection graphs $(\mathcal{G}^N)_N$ converges in
    probability in the local weak sense to a random pogm tree
    $\mathcal{T}$ such that
    \begin{itemize}
        \item with probability $I_0$, $\mathcal{T}$ is made of a
            single $(I)$ vertex $\emptyset$, whose mark $(Z_\emptyset, 
            \mathcal{X}_\emptyset)$ is distributed as $Z_\emptyset \sim g(a)
            \diff a$ and $\mathcal{X}_\emptyset \sim (\mathcal{P}, X)$;
        \item with probability $S_0$, $\mathcal{T}$ is distributed as the
            random tree $\mathcal{H}$ of Section~\ref{def:poisson-tree}.
    \end{itemize}
\end{proposition}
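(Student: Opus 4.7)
The plan is to exploit the directed configuration model representation of \Cref{prop:configurationModel}, exploring the backward graph $\mathcal{G}^N(x)$ from a uniformly chosen root $x$ and showing that for any fixed genealogical radius $r$, the $r$-neighborhood $[\mathcal{G}^N(x)]_r$ converges in distribution to $[\mathcal{T}]_r$. Convergence in probability of the empirical measure $P(\mathcal{G}^N)$ will then be obtained by a second-moment argument applied to a pair of independent uniform roots.

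The dichotomy in the limit $\mathcal{T}$ reflects the type of the root: with probability $I_0$ the root is of type $(I)$ and the ancestor graph $\mathcal{G}^N(x) = \{x\}$ is the single marked vertex appearing in the first item of the statement. When $x$ is of type $(S)$, which happens with probability $S_0$, we explore backward. The key input is a one-step analysis at a type-$(S)$ vertex $v$: by \Cref{prop:configurationModel}, conditional on the out-degree sequence the pairing of in- and out half-edges is uniform, so each in-edge at $v$ selects its origin $y$ with probability proportional to $|\widehat{\mathcal{P}}_y|$. Poissonization of the multinomial \eqref{eq:multinomial} makes the total in-degree asymptotically $\mathrm{Poisson}(S_0 R_0 + I_0 \bar R_0)$, and Poisson thinning by the asymptotic fraction of out half-edges emanating from type-$(S)$ versus type-$(I)$ vertices produces two independent $\mathrm{Poisson}(S_0 R_0)$ and $\mathrm{Poisson}(I_0 \bar R_0)$ numbers of $(S)$- and $(I)$-parents, matching Step~1 in the construction of $\mathcal{H}$.

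For the marks and edge lengths, the crucial observation is that revealing an in-edge at $v$ with origin of type $(S)$ biases $\mathcal{X}_y$ by $|\mathcal{P}_y|$ and selects a uniform atom of $\mathcal{P}_y$ as the edge length $a_e$ --- this is exactly Campbell's measure from \Cref{def:campbel}, so in the limit $(a_e, \mathcal{X}_y)$ is distributed as $(W, \mathcal{X}^\star)$, and conditional on $a_e = w$ the parent's mark follows the Palm measure $\mathcal{X}^{(w)}$. An analogous computation for a type-$(I)$ origin yields the joint density $G(w,z) = g(z)\tau(w+z)/\bar R_0$ on $(a_e, Z_y)$ from \eqref{def:g}, so Step~2 of the decoration of $\mathcal{H}$ is recovered. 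Iterating the one-step analysis then yields convergence of $[\mathcal{G}^N(x)]_r$ to $[\mathcal{T}]_r$ in distribution: at each step only finitely many new half-edges are matched (the offspring distributions have finite means) and the probability that the $r$-ball closes into a cycle is $O(1/N)$, by the classical configuration-model estimate \cite{van2017stochastic}.

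To promote this to convergence in probability of the random measure $P(\mathcal{G}^N)$ against any bounded continuous test function on $\mathscr{H}$, one shows that for two independent uniform roots $x, y \in [N]$ the balls $[\mathcal{G}^N(x)]_r$ and $[\mathcal{G}^N(y)]_r$ are vertex-disjoint with probability $1 - o(1)$, so that the pair converges to two independent copies of $[\mathcal{T}]_r$; a standard Chebyshev argument then yields the desired convergence in probability. I expect the main obstacle to be the technical bookkeeping for the marks: one must carefully track the size-biasing induced by the uniform matching of half-edges --- both the biasing of the endpoint vertex and the selection of a uniform atom of its infection measure --- and verify that its combination with the conditional distribution of $(\mathcal{P}_y, X_y)$ given $|\widehat{\mathcal{P}}_y|$ produces precisely the Campbell/Palm laws in the limit, and that after an $(S)$-parent has been partially explored the unrevealed part of its mark is still distributed according to the appropriate conditioned Palm measure, independently of the rest of the graph.
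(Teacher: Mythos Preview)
Your proposal is correct and close in spirit to the paper's proof, but the paper organizes the argument differently in a way that sidesteps precisely the technical concern you flag at the end.

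Rather than running a direct breadth-first exploration that reveals marks and half-edge pairings simultaneously, the paper decouples the two. In its Step~1 it strips the graph down to a directed configuration model marked only by out-degrees and invokes an off-the-shelf local weak convergence result for such models (Proposition~6.2 of \cite{garavaglia20}), after checking the required degree-sequence conditions; this yields convergence of the unmarked graph to a Galton--Watson tree $\widetilde{\mathcal{T}}$ with $\mathrm{Poisson}(S_0R_0+I_0\bar R_0)$ offspring and out-degree marks. In Step~2 it restores the full marks $(Z_u,\mathcal{X}_u)$ and edge lengths \emph{conditionally on the out-degrees}, using \Cref{prop:configurationModel}: on the event that the $r$-balls around two uniform roots are vertex-disjoint (probability $1-o(1)$, by the same two-root argument you propose), the marks are conditionally independent with law $(Z,\mathcal{X})\mid |\widehat{\mathcal{P}}|=\tilde m_u$. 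Only in Step~3 does the paper carry out the Campbell/Palm computation you sketch, but now as a purely distributional identification on the limit tree rather than inside the exploration.

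The payoff of this decomposition is that the ``partially explored parent'' bookkeeping you anticipate never arises: since marks are added after the graph structure is fixed, one never has to argue that the residual law of $\mathcal{X}_y$ given one revealed atom remains a conditioned Palm measure and is independent of the yet-unexplored graph. Your direct exploration would work too, but would require exactly that argument; the paper's route trades it for checking three moment conditions on the degree sequence and a clean conditional-marking step.
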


In other words, the tree $\mathcal{H}$ constructed in
Section~\ref{def:poisson-tree} corresponds to the law of $\mathcal{T}$,
conditioned on starting from an ($S$) vertex. 

\begin{lemma} \label{lem:convMeasure}
    For each $N$, let $(X^N_i;\, i \le N)$ be some exchangeable random
    variables in some Polish state space, and $(X_1, X_2)$ be two
    independent random variables with distribution $\mathcal{L}(X)$. Then
    \[
        (X^N_1, X^N_2) \to (X_1, X_2) \iff \frac{1}{N} \sum_{i=1}^N
            \delta_{X^N_i} \to \mathcal{L}(X)
    \]
    where the two convergence are in distribution.
\end{lemma}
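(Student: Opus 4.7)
This is a classical equivalence between asymptotic independence of exchangeable variables and convergence of the empirical measure. A first observation is that since the putative limit $\mathcal{L}(X)$ is a deterministic element of the space of probability measures (a Polish space), convergence in distribution of $\mu^N \coloneqq \frac{1}{N}\sum_{i=1}^N \delta_{X^N_i}$ to $\mathcal{L}(X)$ is equivalent to convergence in probability. I will therefore prove both directions using a computation of first and second moments of linear functionals $\langle \mu^N, f\rangle$ for $f$ bounded continuous.

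For the implication $(\Rightarrow)$, assume that $(X^N_1, X^N_2) \to (X_1, X_2)$ in distribution. Marginal convergence gives $\EE[f(X^N_1)] \to \EE[f(X)]$, and by exchangeability
\[
    \EE\bigl[\langle \mu^N, f\rangle^2\bigr]
    = \frac{1}{N} \EE\bigl[f(X^N_1)^2\bigr]
    + \frac{N-1}{N} \EE\bigl[f(X^N_1) f(X^N_2)\bigr]
    \tol \EE[f(X_1)]\,\EE[f(X_2)] = \EE[f(X)]^2,
\]
where the convergence of the second term uses the joint convergence and boundedness of $f$. Combined with $\EE[\langle \mu^N, f\rangle] \to \EE[f(X)]$, this shows $\mathrm{Var}(\langle \mu^N, f\rangle) \to 0$, hence $\langle \mu^N, f\rangle \to \EE[f(X)]$ in probability. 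Since the space of probability measures on a Polish space admits a countable family $(f_k)$ of bounded continuous functions determining weak convergence, a standard diagonal/Borel--Cantelli argument upgrades these one-dimensional convergences (in probability) to $\mu^N \to \mathcal{L}(X)$ weakly in probability, and therefore in distribution.

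For the converse $(\Leftarrow)$, assume $\mu^N \to \mathcal{L}(X)$ in distribution, equivalently in probability. Let $f, g$ be bounded continuous. By exchangeability,
\[
    \EE[f(X^N_1) g(X^N_2)]
    = \frac{N}{N-1}\, \EE\bigl[\langle \mu^N, f\rangle \langle \mu^N, g\rangle\bigr]
    - \frac{1}{N-1}\, \EE\bigl[\langle \mu^N, fg\rangle\bigr].
\]
The second term is $O(1/N)$ since $fg$ is bounded. For the first, $\langle \mu^N, f\rangle \langle \mu^N, g\rangle \to \EE[f(X)] \EE[g(X)]$ in probability, and the product is bounded by $\|f\|_\infty \|g\|_\infty$, so bounded convergence yields $\EE[f(X^N_1) g(X^N_2)] \to \EE[f(X_1)]\EE[g(X_2)]$. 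This proves that the joint law of $(X^N_1, X^N_2)$ converges to the product measure $\mathcal{L}(X)^{\otimes 2}$, which is the law of $(X_1, X_2)$.

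The only mild subtlety is the last step of the $(\Rightarrow)$ direction: passing from convergence in probability of each scalar $\langle \mu^N, f\rangle$ to weak convergence in probability of the random measure $\mu^N$. This is standard for Polish state spaces via a convergence-determining countable family, but it is the only place where we genuinely use that the state space is Polish rather than just metric.
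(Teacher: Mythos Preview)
Your proof is correct and follows essentially the same route as the paper's: both directions are handled by the exchangeability identity
\[
    \EE\Big[\Big(\tfrac{1}{N}\sum_i \phi(X^N_i)\Big)\Big(\tfrac{1}{N}\sum_j \psi(X^N_j)\Big)\Big]
    = \EE[\phi(X^N_1)\psi(X^N_2)] + O(1/N),
\]
together with bounded convergence. The only cosmetic difference is in the final step of the $(\Rightarrow)$ direction: where you invoke a countable convergence-determining family and a diagonal argument, the paper simply cites a standard result (Theorem~4.11 in \cite{Kal17}) to pass from convergence of $\langle \mu^N, f\rangle$ for each $f$ to weak convergence of $\mu^N$.
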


\begin{proof}
    By exchangeability, for any continuous bounded $\phi, \psi$, 
    \begin{equation} \label{eq:double-cv-exchangeability-lemma}
            \EE\Big[ \Big(\frac{1}{N} \sum_{i=1}^N \phi(X_i^N)\Big) 
                 \Big(\frac{1}{N} \sum_{j=1}^N \psi(X_j^N)\Big) \Big]
                 = \EE\big[ \phi(X_1^N)\psi(X_2^N) \big] + O\big(\tfrac{1}{N}\big).
    \end{equation}
    If the random measure converges, then by dominated convergence
    \[
        \EE\Big[ \Big(\frac{1}{N} \sum_{i=1}^N \phi(X_i^N)\Big)
                 \Big(\frac{1}{N} \sum_{j=1}^N \psi(X_j^N)\Big) \Big]
        \longrightarrow 
        \EE[\phi(X_1)]\EE[\psi(X_2)]
    \]
    showing that the pair $(X^N_1, X^N_2)$ converges in distribution to
    $(X_1, X_2)$. Conversely, using again \eqref{eq:double-cv-exchangeability-lemma},
    the convergence of $(X^N_1, X^N_2)$ entails that 
    \[
        \EE\Big[ \frac{1}{N} \sum_{i=1}^N \phi(X_i^N) \Big] =
        \EE[ \phi(X^N_1) ] \to \EE[\phi(X_1)],\quad
        \EE\Big[ \Big(\frac{1}{N} \sum_{i=1}^N \phi(X_i^N)\Big)^2 \Big]
        \longrightarrow
        \EE[\phi(X_1)]^2.
    \]
    These two estimates prove that $\frac{1}{N} \sum_{i=1}^N \phi(X_i^N)$
    converges in distribution to $\EE[\phi(X_1)]$, which in turn shows
    that the measure $\frac{1}{N} \sum_{i=1}^N \delta_{X^N_i}$ converges
    to $\mathcal{L}(X)$, see for instance \cite[Theorem 4.11]{Kal17}.
\end{proof}

\begin{proof}[Proof of Proposition~\ref{prop:LWconvergence}]
    We prove the result in three steps. First, we show the local weak
    convergence of the graph structure (without the marking) towards a
    limiting Galton--Watson tree $\widetilde{\mathcal{T}}$. We make use of known
    results on the local weak convergence of configuration models. Then we
    show that $(\mathcal{G}^N)_N$ (with the marking) converges to the tree
    $\mathcal{T}$ obtained by marking $\widetilde{\mathcal{T}}$
    appropriately. Finally we prove that the law of the limiting tree
    $\widetilde{\mathcal{T}}$, conditional on starting from an ($S$)
    vertex and after removing all edges pointing towards an ($I$) vertex,
    is distributed as the Poisson tree $\mathcal{H}$.

    \medskip
    \noindent
    \textbf{Step 1.} Recall that the infection graph $\mathcal{G}^N$ can be constructed
    as a directed configuration model, see the notation in
    Proposition~\ref{prop:configurationModel}. We will use the known fact
    that the local weak limit of a configuration model is a
    Galton--Watson tree \cite[Section~2.2.2]{van2017stochastic}. We make
    use of a version of this result for directed graphs derived in
    \cite[Proposition~6.2]{garavaglia20}.

    The local weak convergence in \cite{garavaglia20} is derived for a
    different class of oriented graphs than the pogm graphs introduced in
    this work. Namely, edges have no lengths and the vertices are marked
    with their out-degrees. Accordingly, let us denote by
    $\tilde{\mathcal{G}}^N$ the oriented marked graph obtained by
    replacing the marks $(m_x)_x = (Z_x, \mathcal{X}_x)_x$ by the mark
    $(\tilde{m}_x)_x = (\abs{\widehat{\mathcal{P}}_x})_x$ and removing
    the edge lengths. Recall the notation $(D^\Out_x)_x$ and
    $(D^\In_x)_x$ for the collection of in and out degree in
    $\tilde{\mathcal{G}}^N$. Three conditions need to be checked on this
    degree sequence to obtain the local weak convergence of
    $\tilde{\mathcal{G}}^N$, see Condition~6.1 of \cite{garavaglia20},
    \begin{enumerate}
        \item[(a)] for any positive bounded function $\phi$, in probability,
            \[
                \frac{1}{N} \sum_{x \in [N]} \phi(D^\Out_x, D^\In_x)
                \longrightarrow
                \EE[\phi(\mathcal{D}^\Out, \mathcal{D}^\In)];
            \]
        \item[(b)] we have
            \[
                \frac{1}{N} \sum_{x \in [N]} D^\Out_x 
                \longrightarrow
                \EE[\mathcal{D}^\Out],\qquad
                \frac{1}{N} \sum_{x \in [N]} D^\In_x 
                \longrightarrow
                \EE[\mathcal{D}^\In]
            \]
            in probability, and $\EE[\mathcal{D}^\Out] = \EE[\mathcal{D}^\In]$; and
        \item[(c)] for any positive bounded function $\phi$, if $L^N =
            D^\Out_1+\dots+D^\Out_N$, 
            \[
                \frac{1}{L^N} \sum_{x \in [N]} D^\Out_x \phi(D^\Out_x, D^\In_x)
                \longrightarrow
                \EE[\phi(\mathcal{D}^{\star\Out}, \mathcal{D}^{\star\In})]
            \]
            in probability (note that we have removed a $1/N$ factor
            compared to \cite{garavaglia20} that should not appear);
    \end{enumerate}
    for some random pair $(\mathcal{D}^\Out, \mathcal{D}^\In)$, and where
    $(\mathcal{D}^{\star\Out}, \mathcal{D}^{\star\In})$ is obtained by
    size-biasing $(\mathcal{D}^\Out, \mathcal{D}^\In)$ by its first
    coordinate.

    We check condition (a) by computing the second moment of
    the empirical distribution of degrees. Since the in-degrees follow
    the multinomial distribution \eqref{eq:multinomial}, we have that
    \[
        (D^\In_1, D^\In_2) \sim 
        \mathrm{Multinomial}\big(D^\Out_1 + D^\Out_2 + \sum_{i=3}^N D^\Out_i; (\tfrac{1}{N}, \tfrac{1}{N}) \big)
    \]
    so that, conditional on $(D^\Out_1, D^\Out_2)$, 
    \[
        (D^\In_1, D^\In_2) 
        \longrightarrow
        (\mathcal{D}^\In_1, \mathcal{D}^\In_2)
    \]
    which are independent Poisson random variables with mean
    $\EE[\abs{\widehat{\mathcal{P}}}] = S_0 R_0 + I_0 \bar{R}_0$. Using
    Lemma~\ref{lem:convMeasure} proves that (a) holds where
    $(\mathcal{D}^\Out, \mathcal{D}^\In)$ are independent r.v.\ with
    $\mathcal{D}^\Out \sim \abs{\widehat{\mathcal{P}}}$ and
    $\mathcal{D}^\In \sim \mathrm{Poisson}(S_0 R_0 + I_0 \bar{R}_0)$.

    Point (b) is a direct application of the law of large numbers
    \[
        \frac{1}{N} \sum_{i=1}^N D^\Out_i \longrightarrow \EE[ \mathcal{D}^\Out ],
    \]
    and point (c) follows from (a) and (b): using point (a) with
    $\phi(d,d') = \indic{(d,d') = (k,k')}$ we have that
    \[
        \frac{1}{N} \Card \{ x : (D^\Out_x, D^\In_x) = (k,k')\} \longrightarrow
        \P( \mathcal{D}^\Out = k, \mathcal{D}^\In = k'),
    \]
    in probability, and combining this with point (b) we have
    \begin{equation} \label{eq:sizeBiasedConv}
        \frac{k}{L_N} \Card \{ x : (D^\Out_x, D^\In_x) = (k,k')\} \longrightarrow
        \frac{k}{\EE[\mathcal{D}^\Out]} \P( \mathcal{D}^\Out = k, \mathcal{D}^\In = k'),
    \end{equation}
    in probability. This shows (c) for our specific choice of $\phi$. 
    For a general $\phi$, up to extracting a subsequence, let us assume
    that a.s.\ \eqref{eq:sizeBiasedConv} holds, for all $k, k' \ge 0$.
    Scheffé's lemma shows that this pointwise convergence can be
    reinforced to a convergence in $\ell_1(\NN \times \NN)$, which readily
    entails (c).

    Therefore, Proposition~6.2 in \cite{garavaglia20} shows that
    $\tilde{\mathcal{G}}^N$ converges in probability in the local weak
    sense towards a marked Galton--Watson tree $\widetilde{\mathcal{T}}$
    where each vertex $u$ has:
    \begin{enumerate}
        \item a $\mathrm{Poisson}(S_0R_0 + I_0 \bar{R}_0)$ number of
            offspring (with edges oriented from the children towards the
            parents); and
        \item an independent mark $\tilde{m}_u$ distributed as
            $\abs{\widehat{\mathcal{P}}}$ for the root and as the
            size-biasing of $\abs{\widehat{\mathcal{P}}}$ for other
            vertices.
    \end{enumerate}
    Note that in this tree there is no distinction between ($I$) and
    ($S$) vertices since part of the marking has been removed.

    \medskip
    \noindent
    \textbf{Step 2.} We now show that $\mathcal{G}^N$ (with the full
    marking) converges to a tree obtained by marking the limiting
    Galton--Watson tree as in Proposition~\ref{prop:configurationModel}.
    Let $x, y \in [N]$, and let $\tilde{\mathcal{G}}^N(x)$ and
    $\tilde{\mathcal{G}}^N(y)$ be the subgraphs of $\tilde{\mathcal{G}}^N$ 
    induced by all the vertices with an oriented path to $x$ and $y$ respectively.
    By construction, each vertex in $\mathcal{G}^N$ (and thus
    each vertex in $\tilde{\mathcal{G}}^N(x)$ and $\tilde{\mathcal{G}}^N(y)$) is indexed
    by an element of $[N]$ that we call its \emph{label}. Recall the
    notation $[G]_r$ for the ball of radius $r$ of a pogm $G$ around the
    pointed vertex. Let $B^N_r$ be the event ``the labels of the vertices
    in $[\tilde{\mathcal{G}}^N(x)]_r$ and $[\tilde{\mathcal{G}}^N(y)]_r$ are distinct''.
    On this event Proposition~\ref{prop:configurationModel} shows that,
    conditional on the unmarked graphs $[\tilde{\mathcal{G}}^N(x)]_r$ and
    $[\tilde{\mathcal{G}}^N(y)]_r$, the marks $(m_u)_u$ of the vertices
    in $[\mathcal{G}^N(x)]_r$ and $[\mathcal{G}^N(y)]_r$ are independent and
    \begin{equation} \label{eq:marking}
        m_u \sim (Z, \mathcal{P}, X) \text{ conditioned on $\abs{\widehat{\mathcal{P}}} = \tilde{m}_u$}.
    \end{equation}
    Furthermore the lengths of the edges going out of $u$ are sampled uniformly 
    among the atoms of $\widehat{\mathcal{P}}_u$. Now, the first part
    of the proof and Lemma~\ref{lem:convMeasure} show that 
    $(\tilde{\mathcal{G}}^N(x), \tilde{\mathcal{G}}^N(y))$ converges in
    distribution to two independent copies $(\widetilde{\mathcal{T}}_1,
    \widetilde{\mathcal{T}}_2)$ of the limit Galton--Watson tree. 
    Provided that $\P(B^N_r) \to 1$, this shows that in distribution
    \[
        ([\mathcal{G}^N(x)]_r, [\mathcal{G}^N(y)]_r) \to
        ([\mathcal{T}_1]_r, [\mathcal{T}_2]_r)
    \]
    where the tree $\mathcal{T}_i$ is obtained out of $\widetilde{\mathcal{T}}_i$ 
    by adding marks and edge lengths as in \eqref{eq:marking} and
    removing edges pointing to an ($I$) vertex. In turn,
    Lemma~\ref{lem:convMeasure} proves that $\mathcal{G}^N$ converges to
    $\mathcal{T}_1$ in probability in the local weak sense. 
    It remains to show that $\P(B^N_r) \to 1$. This
    result is actually shown as a step in the proof of Proposition~6.2 from
    \cite{garavaglia20} that we have used in our Step 1. More precisely,
    the proof of \cite[Lemma~6.4]{garavaglia20} shows that, with
    probability going to $1$, the balls of radius $r$ of two uniformly
    chosen vertices in the directed configuration model do not intersect,
    which is the result we need here. Let us explain heuristically why we
    expect this result to hold. The ball $[\mathcal{G}^N(x)]_r$ can be
    constructed by exploring the graph starting from $x$, following
    the in-edges in reverse direction, and pairing them with out-edges.
    Each time a new in-edge is explored, it is paired with an out-edge
    chosen uniformly from the unpaired out-edges in the graph. Since the
    total number of edges explored in $[\mathcal{G}^N(x)]_r$ and
    $[\mathcal{G}^N(y)]_r$ is negligible w.r.t.\ the total number of
    edges in $\mathcal{G}^N$ (and since no vertex in $\mathcal{G}^N$ has
    a number of out-edges of order $N$) the probability that the same vertex
    is explored both in $[\mathcal{G}^N(x)]_r$ and $[\mathcal{G}^N(y)]_r$
    vanishes as $N \to \infty$. This argument is made rigorous in the
    proof of \cite[Lemma~6.4]{garavaglia20}.

    \medskip
    \noindent
    \textbf{Step 3.}
    Let $\mathcal{T}$ be distributed as the local weak limit of
    $\mathcal{G}^N$ from the previous step. Our last task is to
    connect the distribution of $\mathcal{T}$ to that of the Poisson tree
    $\mathcal{H}$ from Section~\ref{def:poisson-tree}. Let us first take
    care of the root $\emptyset$. By definition of
    $\widetilde{\mathcal{T}}$, $\tilde{m}_\emptyset \sim
    \abs{\widehat{\mathcal{P}}}$ and conditional on
    $\tilde{m}_\emptyset$, $m_\emptyset \sim (Z, \mathcal{X})$
    conditioned on $\abs{\widehat{\mathcal{P}}} = \tilde{m}_\emptyset$.
    This readily shows that the mark of the root is distributed as $(Z,
    \mathcal{X})$, so that in particular it is of type ($I$) and ($S$)
    with probability $I_0$ and $S_0$ respectively.

    We now turn to some non-root vertex $u \in \widetilde{\mathcal{T}}$.
    Recall that its mark $\tilde{m}_u$ has the size-biased distribution
    of $\abs{\widehat{\mathcal{P}}}$ and that $m_u = (Z_u, \mathcal{X}_u)$ 
    is obtained as in \eqref{eq:marking}. Let $A_u$ be the length of its
    unique out-edge, which is uniformly chosen among the atoms of
    $\widehat{\mathcal{P}}_u$. We have
    \begin{align*}
        \EE\Big[ \phi\big( A_u, Z_u, \mathcal{X}_u \big) \Big] 
        &= 
        \EE\Big[ \EE\Big[
            \frac{1}{\abs{\widehat{\mathcal{P}}}}\int \phi\big( a, Z, \mathcal{X} \big) 
        \widehat{\mathcal{P}}(\diff a) \:\Big|\: \abs{\widehat{\mathcal{P}}} = \tilde{m}_u \Big] \Big] \\
        &= \frac{1}{\EE[\abs{\widehat{\mathcal{P}}}]} 
        \EE\Big[ \int \phi\big( a, Z, \mathcal{X} \big) 
        \widehat{\mathcal{P}}(\diff a) \Big] \\
        &= 
        \begin{multlined}[t]
        \frac{1}{S_0R_0 + I_0\bar{R}_0} 
        S_0 \EE\Big[ \int \phi\big( a, 0, \mathcal{X} \big) 
        \mathcal{P}(\diff a) \Big]  \\
        +
        \frac{1}{S_0R_0 + I_0\bar{R}_0} 
        I_0 \EE\Big[\int_0^\infty g(u) \int_u^\infty
            \phi\big( a-u, u, \mathcal{X} \big) \mathcal{P}(\diff a)\diff u \Big].
        \end{multlined} \\
        &= 
        \begin{multlined}[t]
        \frac{S_0R_0}{S_0R_0 + I_0\bar{R}_0} 
        \frac{1}{R_0} \EE\Big[ \int \phi\big( a, 0, \mathcal{X} \big) 
        \mathcal{P}(\diff a) \Big]  \\
        +
        \frac{I_0\bar{R}_0}{S_0R_0 + I_0\bar{R}_0} 
        \frac{1}{\bar{R}_0} \EE\Big[\int_0^\infty g(u) \int_u^\infty
            \phi\big( a-u, u, \mathcal{X} \big) \mathcal{P}(\diff a)\diff u \Big],
        \end{multlined}
    \end{align*}
    where in the first line we have used \eqref{eq:marking} and that
    $A_u$ is a uniform atom of $\widehat{\mathcal{P}}_u$, in the second
    line that $\tilde{m}_u$ has the size-biased distribution of
    $\widehat{\mathcal{P}}$ and in the third line the definition of
    $\widehat{\mathcal{P}}$ of \eqref{eq:shiftedPP}.
    The result now follows upon identifying the terms. The prefactor in each
    term of the sums corresponds to the probability that $Z_u = 0$ or $Z_u >
    0$, that is, that vertex $u$ is of type ($S$) or ($I$). Since
    the total number of offspring in $\widetilde{\mathcal{T}}$ follows a Poisson
    distribution with parameter $S_0R_0 + I_0\bar{R}_0$, the number
    of ($S$) and ($I$) offspring are independent Poisson random variables
    with means $S_0 R_0$ and $I_0 \bar{R}_0$ respectively. Moreover
    \[
        \frac{1}{R_0} \EE\Big[ \int \phi\big( a, \mathcal{X} \big) 
        \mathcal{P}(\diff a) \Big]
        =
        \EE\big[ \phi( W, \mathcal{X}^{\star} ) \big]
    \]
    where $(W, \mathcal{X}^{\star})$ has the Campbell measure of
    Definition~\ref{def:campbel}. Thus any ($S$) individual in $\mathcal{T}$
    has an edge length and mark distributed as $(W,
    \mathcal{X}^{\star})$ as in $\mathcal{H}$. Similarly, 
    \begin{align*}
        \frac{1}{\bar{R}_0} \EE\Big[\int_0^\infty g(u) \int_u^\infty
            &\phi( a-u, u, \mathcal{X} ) \mathcal{P}(\diff a)\diff u \Big] \\
        &= \frac{1}{\bar{R}_0} \int_0^\infty g(u) \int_u^\infty
        \tau(a) \EE\big[ \phi( a-u, u, \mathcal{X}^{(a)} ) \big] \diff a\diff u  \\
        &= \int_0^\infty \int_0^\infty G(v,u)
        \EE\big[ \phi( v, u, \mathcal{X}^{(u+v)} ) \big] \diff v\diff u,
    \end{align*}
    where $\mathcal{X}^{(a)}$ has the Palm distribution of $\mathcal{X}$
    at $a$, and $G$ is the probability density defined in \eqref{def:g}.
    In the second line we have used the definition of the Palm measure.
    Identifying the terms, the mark of an ($I$) individual is obtained as
    that defined for $\mathcal{H}$.
\end{proof}

\section{Convergence of the historical process}
\label{SS:proofMainThm}

We can now state and prove our main result. Let us introduce
the historical process as the following empirical measure
\begin{equation}\label{eq:def-historical}
    H^N \ \coloneqq \ \sum_{x\in[N]} \indic{\sigma^N_x<\infty} \ \delta_{{\cal A}^{N}_x}.
\end{equation}
We also define the historical process at time $t \ge 0$ as the historical
process of all individuals infected before time $t$,
\begin{equation*}
    H^N_t \ \coloneqq \ \sum_{x\in[N]} \indic{\sigma^N_x \le t} \ \delta_{{\cal A}^{N}_x}.
\end{equation*}

\begin{theorem}[Convergence of the historical process]\label{thm:hist-process}
Let ${\cal A}^\infty$ be the limiting ancestral process in the Poisson
tree $\mathcal{H}$ and let $(\sigma_0 , {\cal X})$ denote a pair of
independent random variables where $-\sigma_0$ is distributed according
to the density $g$. 

\begin{enumerate}
    \item[(i)]
For any $t \ge 0$ we have
\[
    \frac{1}{N} H_t^N  \ \longrightarrow \ S_0 \P( \sigma^\infty \le t) 
    {\cal L}\left({\cal A}^\infty \mid \sigma^\infty \le t \right) \ + \  I_0 {\cal L}( \sigma_0 , {\cal X} ).
\]
where ${\cal L}\left({\cal A}^\infty \mid \sigma^\infty \le t \right) $
is the law of the random variable ${\cal A}^\infty$ conditioned on the
event $\{ \sigma^\infty \le t \}$, and the convergence is in distribution
for the weak topology.

\item[(ii)]
If $(c(t);\, t \ge 0)$ converges as $t \to \infty$ we have that 
\[
    \frac{1}{N} H^N  \ \longrightarrow \ S_0 \P( \sigma^\infty <\infty) 
    {\cal L}\left({\cal A}^\infty \mid \sigma^\infty <\infty \right) \ + \  I_0 {\cal L}( \sigma_0 , {\cal X} ).
\]
in distribution
for the weak topology.
\end{enumerate}
\end{theorem}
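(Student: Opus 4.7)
The plan is to realize the historical process $H^N_t$ as the empirical measure of a continuous functional of the rooted infection graphs $(\mathcal{G}^N(x))_{x \in [N]}$, and then invoke \Cref{lem:double-continuous-mapping} together with the local weak convergence of \Cref{prop:LWconvergence}. The key observation is that the ancestral process $\mathcal{A}^N_x$ is a local functional of $\mathcal{G}^N(x)$: it depends only on the finitely many edges traversed by the active geodesic from $\mathcal{I}^N_0$ to $x$.

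\textbf{Step 1 (enrichment and functional).} First I would enrich each edge $e$ of $\mathcal{G}^N$ with the i.i.d.\ uniform contact intensity $s_e$ of \Cref{def:active-goedesics}. Since these marks are independent of the rest of the graph, the proof of \Cref{prop:LWconvergence} extends verbatim to the enriched graph, giving local weak convergence to the analogous decoration of $\mathcal{T}$. For fixed $t \in [0,\infty)$ I introduce a cemetery point $\dagger$ and define $\Phi_t$ on the space of rooted enriched pogm graphs by
\[
\Phi_t(G) = \begin{cases} \mathcal{A}_\emptyset(G) & \text{if $\emptyset$ is $(I)$, or $(S)$ with an active geodesic of length } \le t, \\ \dagger & \text{otherwise,} \end{cases}
\]
where $\mathcal{A}_\emptyset(G)$ is built as in \Cref{def:infection-path}. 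By construction, $\frac{1}{N} H^N_t$ is obtained from $\frac{1}{N}\sum_{x \in [N]} \delta_{\Phi_t(\mathcal{G}^N(x))}$ by discarding the mass at $\dagger$.

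\textbf{Step 2 (continuity and identification of the limit).} Because $\tau$ has a density and the $s_e$ are uniform, the following event has probability one under $\mathcal{L}(\mathcal{T})$: the active geodesic $\pi^\infty$ is unique, uses only finitely many edges, has length $\ne t$, is strictly shorter than every other active path of comparable genealogical depth, and satisfies $s_e \ne c(|\tau_k\pi|)$ for every edge $e$ and every competing finite initial segment $\pi$. On this full-measure event, any pogm graph $d_\mathscr{H}$-close to $\mathcal{T}$ shares the same topological active geodesic with nearby edge lengths, marks and contact intensities, so $\Phi_t$ is continuous there. Applying \Cref{lem:double-continuous-mapping} yields
\[
\frac{1}{N} \sum_{x \in [N]} \delta_{\Phi_t(\mathcal{G}^N(x))} \ \longrightarrow\  \mathcal{L}(\Phi_t(\mathcal{T}))
\]
in probability for the weak topology. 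The two-term decomposition of $\mathcal{T}$ given by \Cref{prop:LWconvergence} identifies the non-$\dagger$ part of this limit with $I_0\,\mathcal{L}(\sigma_0, \mathcal{X}) + S_0\,\mathbb{P}(\sigma^\infty \le t)\,\mathcal{L}(\mathcal{A}^\infty \mid \sigma^\infty \le t)$: the $(I)$-root contribution comes from the single-vertex case with $Z_\emptyset \sim g$, and the $(S)$-root contribution comes from $\mathcal{H}$, on which $\Phi_t$ coincides with $\mathcal{A}^\infty$ restricted to $\{\sigma^\infty \le t\}$ by \Cref{prop:conditionalPath}. This proves part (i).

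\textbf{Step 3 (part (ii) and main obstacle).} Part (ii) follows from (i) by a truncation argument letting $t \to \infty$, provided one can control the tail mass $\frac{1}{N}\#\{x : t < \sigma^N_x < \infty\}$ uniformly in $N$. This is where the assumption that $c(t)$ converges enters: through the delay equation~\eqref{eq:delay} and the identity $B(t) = S_0\mathbb{P}(\sigma^\infty \le t)$, it implies that $B(t)$ has a finite limit $B(\infty) = S_0\mathbb{P}(\sigma^\infty < \infty)$ and that the deterministic tail $B(\infty) - B(T)$ can be made arbitrarily small; a first-moment bound on the total number of infections after time $T$ transfers this to the random process. The principal technical obstacle is the a.s.\ continuity assertion of Step 2: although the active geodesic is a finite-depth object, continuity in $d_\mathscr{H}$ requires simultaneously ruling out ties among path lengths (ensured by the absolute continuity of $\tau$) and at the activation threshold $s_e = c(|\tau_k\pi|)$ (ensured by the atomless law of $s_e$), and controlling the finite but a priori unbounded number of competing subpaths.
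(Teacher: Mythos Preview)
Your overall architecture---view the ancestral process as a functional of the rooted graph, enrich with the contact intensities $s_e$, then combine \Cref{prop:LWconvergence} with \Cref{lem:double-continuous-mapping}---is exactly the paper's. However, two of your steps have real gaps.

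\textbf{Step 2 (continuity).} Your almost-sure event is not the right one, and the argument stops short of the main difficulty. Local convergence $G^N \to G$ in $d_{\mathscr{H}}$ only controls balls $[G^N]_r$ of \emph{bounded} genealogical radius. Even when $\sigma(G)<t$ and the active geodesic in $G$ sits inside $[G]_{r_0}$, you must rule out that $G^N$ possesses a \emph{shorter} active path that reaches topological depth $>r_0$; and when $\sigma(G)>t$ (in particular $\sigma(G)=\infty$) you must rule out that $G^N$ has any active path of length $\le t$ at arbitrary depth. Neither is handled by ``strictly shorter than every other active path of comparable genealogical depth.'' The paper closes this by proving (Step~1 of \Cref{lem:continuityAncestral}) that almost surely either $\mathcal{H}$ is finite or
\[
M_r(\mathcal{H}) \coloneqq \min_{d(\emptyset,u)=r} \abs{\pi_u} \longrightarrow \infty,
\]
a branching-random-walk fact (the minimum drifts to $+\infty$ on survival). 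With this in hand one can choose $r$ so large that all paths leaving $[G]_r$ already exceed $t$ (or exceed $\sigma(G)$), reducing the problem to a finite ball where your perturbation argument applies. You correctly flag ``controlling the finite but a priori unbounded number of competing subpaths'' as the obstacle, but you do not supply this ingredient; without it the continuity claim fails.

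\textbf{Step 3 (part (ii)).} Your truncation route is genuinely different from the paper's, but as written it does not close. You would need
\[
\limsup_{N}\ \frac{1}{N}\#\{x:\ t<\sigma^N_x<\infty\}\ \longrightarrow\ 0 \quad\text{as }t\to\infty,
\]
and part~(i) only gives $\frac{1}{N}\#\{x:\ t<\sigma^N_x\le T\}\to B(T)-B(t)$ for fixed $T$; passing $T\to\infty$ uniformly in $N$ is exactly the missing piece, and no ``first-moment bound on the total number of infections after time $T$'' is available without already knowing the final-size convergence you are trying to prove. (Incidentally, $B(\infty)$ always exists since $B$ is bounded and nondecreasing; the convergence of $c$ is not needed for that.) The paper avoids this circularity by proving directly that $G\mapsto f(\mathcal{A}(G))$ is a.s.\ continuous under $c(t)\to c_*$: on $\{\mathcal{H}_{c_*}\text{ infinite}\}$ an active path exists a.s.\ (Step~3 of \Cref{lem:continuityAncestral}, a percolation/grafting argument), while on $\{\mathcal{H}_{c_*}\text{ finite}\}$ with no active path one shows that any $G^N\to G$ eventually has $\sigma(G^N)=\infty$ (Step~4). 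This is where the hypothesis $c(t)\to c_*$ actually does the work.
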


The convergence result in (ii) is stronger than that in (i), but requires
the mild assumption that the contact rate converges. Point (i) of the
previous result is sufficient to derive the limit of the age structure of
the epidemic, our Theorem~\ref{thm:ageStructure}. However, it is not
sufficient to prove that the total number of individuals infected during
the epidemic converges. This is a very well-studied quantity in epidemic
modeling, referred to as the \emph{final size} of the epidemic 
\cite{ma2006generality, arino2007final}, and our motivation for deriving
point (ii) is the following corollary.

\begin{corollary}[Final size of the epidemic]
    Suppose that $(c(t);\, t \ge 0)$ converges as $t \to \infty$, then
    \[
        \frac{1}{N} \sum_{x \in [N]} \indic{\sigma^N_x < \infty}
        \ \longrightarrow\ 
        1 - \lim_{t \to \infty} S(t),
    \]
    in distribution as $N \to \infty$.
\end{corollary}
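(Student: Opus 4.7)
The plan is to deduce the corollary essentially as a direct consequence of Theorem~\ref{thm:hist-process}(ii), by reading off the convergence of the total mass of the historical process. The starting observation is that the definition \eqref{eq:def-historical} of $H^N$ gives, for the state space $\mathscr{E}$ on which ancestral paths are valued,
\[
    H^N(\mathscr{E}) \;=\; \sum_{x \in [N]} \indic{\sigma^N_x<\infty},
\]
so the random quantity we wish to control is exactly the total mass of $\tfrac{1}{N}H^N$.

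Next, I would invoke Theorem~\ref{thm:hist-process}(ii), which applies under the assumed convergence of $c(t)$ as $t \to \infty$, and yields the weak convergence
\[
    \frac{1}{N}H^N \;\longrightarrow\; S_0\,\P(\sigma^\infty<\infty)\,\mathcal{L}(\mathcal{A}^\infty\mid \sigma^\infty<\infty) \;+\; I_0\, \mathcal{L}(\sigma_0,\mathcal{X}),
\]
in distribution for the weak topology on finite measures on $\mathscr{E}$. Since this weak topology is generated by integration against bounded continuous functions, the constant test function $f \equiv 1$ is admissible; equivalently, the total-mass functional $\mu \mapsto \mu(\mathscr{E})$ is continuous in this topology. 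Applying the continuous mapping theorem for random measures therefore gives
\[
    \frac{1}{N}\sum_{x\in[N]}\indic{\sigma^N_x<\infty} \;=\; \frac{1}{N}H^N(\mathscr{E}) \;\longrightarrow\; S_0\,\P(\sigma^\infty<\infty) + I_0
\]
in distribution; as the right-hand side is deterministic, this is equivalent to convergence in probability.

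It remains to identify the deterministic limit with $1-\lim_{t\to\infty}S(t)$. By \Cref{prop:dual}, the function $B(t) = S_0\,\P(\sigma^\infty \le t)$ solves the delay equation \eqref{eq:delay}, and from the derivation in \Cref{SS:mckvf} we have $S(t) = S_0 - B(t)$. Monotone convergence yields
\[
    \lim_{t \to \infty} S(t) \;=\; S_0 - S_0\,\P(\sigma^\infty<\infty) \;=\; S_0\,\P(\sigma^\infty = \infty),
\]
hence $1-\lim_{t\to\infty}S(t) = I_0 + S_0\,\P(\sigma^\infty<\infty)$, which matches the total mass computed above and concludes the proof.

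The only real point requiring care is the step in which I extract total-mass convergence from weak convergence of random finite measures; this is the main (mild) obstacle. It is crucial here that the limit in \Cref{thm:hist-process}(ii) genuinely charges no mass at infinity (no escape of mass), which is ensured by point (ii) rather than (i): the refined statement already accounts for individuals who never get infected, so that the total-mass functional is continuous at the limiting law. All other steps are algebraic manipulation of the delay equation.
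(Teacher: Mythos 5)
Your proposal is correct and follows essentially the same route as the paper: apply Theorem~\ref{thm:hist-process}(ii) to get convergence of the total mass $\tfrac{1}{N}\sum_x \indic{\sigma^N_x<\infty}$ to $S_0\,\P(\sigma^\infty<\infty)+I_0$, then identify this limit via Proposition~\ref{prop:dual} and the relation $B(t)=S_0-S(t)$. The only difference is that you spell out explicitly the (correct) observation that the total-mass functional is continuous for the weak topology on finite measures, a step the paper leaves implicit.
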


\begin{proof}
    By Theorem~\ref{thm:hist-process}, point (ii), we have that 
    \[
        \frac{1}{N} \sum_{x \in [N]}  \indic{\sigma^N_x < \infty}
        \ \longrightarrow\ 
        S_0 \P(\sigma^\infty < \infty) + I_0
        = \lim_{t \to \infty} S_0 \P(\sigma^\infty \le t) + I_0.
    \]
    By Proposition~\ref{prop:dual},
    \[
        S_0 \P(\sigma^\infty \le t) = B(t) = S_0 - S(t),
    \]
    so that 
    \[
        \lim_{t \to \infty} S_0 \P(\sigma^\infty \le t) + I_0 
        = I_0+S_0 - \lim_{t \to \infty} S(t) = 1 - \lim_{t \to \infty} S(t).
    \]
\end{proof}

To prove the convergence of the historical process, we see the ancestral
process $\mathcal{A}^N_x$ as a functional of the pogm graph
$\mathcal{G}^N(x)$ rooted at $x$. Provided we can show that the
mapping taking a pogm graph to its active geodesic enjoys some
appropriate continuity, the convergence of the historical process will
follow from the local weak convergence of the infection graph
$\mathcal{G}^N$. 

For a deterministic pogm graph $G$, we can define an infection process by
attaching to each edge $e$ a uniform infection intensity $s_e$ which
determines if the edge is active or not, as in Section~\ref{sec:def:active-goedesics}. 
It will be convenient to work conditional on $(s_e)$ and to think of
these infection intensities as a marking of the edges of the graph. It is
straightforward to extend the definitions and results from
Section~\ref{SS:localTopology} to include this marking, and that the
convergence of the infection graph in Proposition~\ref{prop:LWconvergence} 
also remains valid for this extended marking: the infection graph
$\mathcal{G}^N$, marked with uniform infection intensities, converges in
the local weak sense to the tree $\mathcal{T}$, also marked with uniform
infection intensities. 

For a pogm graph with fixed infection intensities, $G$, we can define 
$\mathcal{A}(G)$ as the ancestral process of $G$, which records the
infection times along the active geodesic leading to the pointed vertex,
as defined in Section~\ref{sect:ancestral-path}. We also define
$\sigma(G)$ as the length of the corresponding active geodesic. We can
now prove that the ancestral process is a continuous functional of the
local graph topology.

\begin{lemma} \label{lem:continuityAncestral}
    Let $f$ be a continuous bounded functional on the space of ancestral
    paths. Then for any $t > 0$ the map 
    \begin{equation} \label{eq:geodesicMap}
        G \mapsto f(\mathcal{A}(G)) \indic{\sigma(G) < t}
    \end{equation}
    is continuous at a.e.\ realization $G$ of the tree $\mathcal{T}$.
    If the function $(c(t);\, t \ge 0)$ converges as $t \to \infty$, then $f$ is
    continuous at a.e.\ realization of $\mathcal{A}(\mathcal{T})$.
\end{lemma}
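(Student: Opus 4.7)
The plan is to show that on a full-probability event of realizations of $\mathcal{T}$, the functional $\Phi(G) \coloneqq f(\mathcal{A}(G))\,\indic{\sigma(G)<t}$ depends on $G$ only through the ball $[G]_{r^\star}$ for some a.s.\ finite (random) genealogical radius $r^\star$, and is robust to small perturbations of edge lengths, marks, and contact intensities within that ball. Since $d_{\mathscr{H}}(G_n,G)\to 0$ forces $[G_n]_r$ to be isomorphic to $[G]_r$ up to arbitrarily small discrepancies for each fixed $r$, continuity of $\Phi$ at $G$ will then follow. The generic event on which distinct paths of $\mathcal{T}$ have distinct total lengths, $\sigma(\mathcal{T})\ne t$, and every activeness inequality $s_e\le c(|\tau_k\pi|)$ holding along a path is strict is standard from the absolute continuity of the edge-length and mark distributions together with the independent uniform $s_e$.

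The crux of the proof is the existence of the cutoff $r^\star$. Combining the Malthusian identity $\int e^{-\alpha a}\tau(a)\,\d a=1$ with a direct branching computation on the Poisson tree gives, for every $r\ge 1$,
\[
    \EE\bigg[\sum_{\pi:\,\text{gen.\ length }r} e^{-\alpha|\pi|}\bigg]
    \;=\;S_0^{\,r} + I_0\,S_0^{\,r-1}\,\kappa,
    \qquad
    \kappa \coloneqq \int_0^\infty e^{-\alpha a}\bar{\tau}(a)\,\d a,
\]
where the inner sum runs over all paths of genealogical length $r$ ending at the root $\emptyset$, starting from either an $(S)$ or an $(I)$ vertex at depth $r$. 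Under the mild assumption $\kappa<\infty$ (an exponential-moment condition on $g$), the sum over $r$ is geometric with ratio $S_0<1$. Markov's inequality then bounds $\EE[\#\{\pi:\,\text{gen.\ length }r,\;|\pi|\le t+1\}]$ by $e^{\alpha(t+1)}(S_0^r+I_0 S_0^{r-1}\kappa)$, and by Borel--Cantelli a full-probability event carries a random $r^\star<\infty$ such that no path of genealogical length $\ge r^\star$ ending at the root has length $\le t+1$.

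Given such a cutoff, continuity on $\{\sigma(G)<t\}$ reduces to a gap argument. Let $\pi^\star$ be the unique active geodesic (of genealogical length $\le r^\star$) and pick $\eta>0$ smaller than $(t-\sigma(G))/2$, the smallest gap between distinct path lengths in $[G]_{r^\star}$, and $\min_{k}\bigl(c(|\tau_k\pi^\star|)-s_{e_k}\bigr)$ where $e_1,\dots,e_{n^\star}$ are the edges of $\pi^\star$. For $G'$ with $d_{\mathscr{H}}(G',G)$ sufficiently small, the isomorphism $[G']_{r^\star}\cong[G]_{r^\star}$ has quantitative discrepancies below $\eta/2$. Combined with the cutoff, this implies that (i) paths of genealogical length $\le r^\star$ in $G'$ behave like their $G$-counterparts (same activeness status, close lengths), and (ii) paths of genealogical length $>r^\star$ in $G'$ have their first $r^\star$ edges matching a depth-$r^\star$ path in $G$ of length $>t+1-\eta/2$, so their total length in $G'$ exceeds $t$. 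Hence the active geodesic in $G'$ is the perturbation of $\pi^\star$, $\indic{\sigma(G')<t}=1$, and continuity of $f$ concludes; the case $\sigma(G)>t$ is analogous. For part~(ii), one must further extend the argument to $\sigma(G)=\infty$ (where the ancestral path is empty); the convergence of $c$ is used to thin the tree by the long-term feasibility condition $\{s_e\le c_\infty+\varepsilon\}$, producing a branching structure to which the same exponential tilt applies, so that failure of activeness is robust beyond a random finite depth. I expect this last step to be the main technical obstacle, since activeness is a non-local property depending on all accumulated edge lengths along a path.
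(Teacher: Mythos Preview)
Your approach to part~(i) is correct in spirit and differs from the paper's in how you establish the key cutoff $r^\star$. The paper frames the problem as showing $M_r(\mathcal{H})\to\infty$ a.s., where $M_r(G)$ is the shortest path length from depth $r$ to the root, and obtains this by recognizing that the $(S)$-vertices of $\mathcal{H}$ form a branching random walk with $\mathrm{Poisson}(S_0R_0)$ offspring and $\nu$-distributed displacements, then invoking a general result (Theorem~5.12 in Shi's lecture notes) that the minimum of a supercritical BRW drifts to $+\infty$; the $(I)$ leaves are handled for free since their position dominates that of their $(S)$ parent. Your exponential-tilt plus Borel--Cantelli argument is more elementary and self-contained, but you explicitly need $\kappa=\int e^{-\alpha a}\bar\tau(a)\,\d a<\infty$, which amounts to an exponential moment of order $\alpha$ on the initial age density $g$---an assumption the paper does not make. (Your Markov bound also only works as written when $\alpha>0$; when $\alpha\le0$ the $(S)$-tree is subcritical and a.s.\ finite, so the issue is moot, but you should say so.) The paper's route therefore applies under strictly weaker hypotheses, at the cost of a black-box citation; yours gives an explicit geometric rate.

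Two small points on your gap argument. First, your choice of $\eta$ only controls the activeness margins along the geodesic $\pi^\star$; to ensure that no \emph{inactive} path in $[G]_{r^\star}$ becomes a shorter active competitor in $G'$, $\eta$ must also be below $\min_{\pi,k}\lvert s_{e_k}-c(|\tau_k\pi|)\rvert$ over all paths in the ball, not just the geodesic. Second, neither you nor the paper confronts the fact that $c$ is not assumed continuous, so that perturbing $|\tau_k\pi|$ can in principle move $c(|\tau_k\pi|)$ discontinuously; this is an implicit regularity issue in both arguments.

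For part~(ii) your sketch is substantially incomplete, and this is where the paper does real work. Its Steps~3--4 show: (a) if the thinned tree $\mathcal{H}_{c_*}$ (keeping only edges with $s_e\le c_*$) is infinite, then $\mathcal{H}$ a.s.\ has an active geodesic---the Poisson grafting decomposition of $\mathcal{H}$ over $\mathcal{H}_{c_*-\epsilon}$ produces, on the infinite event, a grafted subtree whose own geodesic starts late enough that the remaining edges to the root all satisfy $c(|\tau_k\pi|)>c_*-\epsilon/2$ and are hence open; and (b) if $G_{c_*}$ is finite and $G$ has no active path, then for $G^N\to G$ any long candidate path in $G^N$ must traverse an edge pointing to a leaf of $G_{c_*}$, whose intensity is strictly above $c_*$, so the path is eventually inactive. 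Your one-line remark about thinning by $\{s_e\le c_\infty+\varepsilon\}$ gestures at this decomposition but does not supply either argument; in particular the non-trivial direction (a), which manufactures an active path out of the survival of the thinned tree, is missing.
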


\begin{proof}
The tree $\mathcal{T}$ is either made of a singled ($I$) vertex, or is a
copy of $\mathcal{H}$. Clearly, in the former case the result holds so
that it remains to show it for almost every realization of $\mathcal{H}$.
For some pogm tree $G$, if $d$ is the genealogical distance and $\pi_v$
denotes the unique path from $v$ to the root, let 
\begin{equation*} 
    M_r(G) \coloneqq 
    \min_{\substack{u \in G\\ d(\emptyset, u) = r}} \abs{\pi_v}
\end{equation*}
be the length of the shortest path from a vertex at distance $r$ to the
root. We start by showing in Step 1 that, almost surely, either
$\mathcal{H}$ is finite, or 
\begin{equation} \label{eq:minBRW}
    M_r(\mathcal{H}) \to \infty \quad\text{as $r \to \infty$}.
\end{equation}
Then, in Step 2, we show that \eqref{eq:geodesicMap} is continuous
for almost all graphs $G$ verifying this property. Under the additional
assumption that $(c(t);\, t \ge 0)$ converges, we prove that $f$ is
continuous in Step~3 and Step~4.

\medskip
\noindent
\textbf{Step 1.} Let $(V_r;\, r \ge 0)$ be the process that records the
ages of the $(S)$ vertices of the Poisson tree $\mathcal{H}$, defined as
\[
    V_r \coloneqq \sum_{\substack{u \in \mathcal{H},\, d(u,\emptyset) = r\\\text{$u$
    of type $(S)$}}} \delta_{\abs{\pi_u}},
\]
Then $(V_r)_{r \ge 0}$ is a branching random walk
with Poisson($R_0S_0$) offspring distribution, and it follows from general results 
that, conditional on non-extinction, its minimum drifts to $\infty$, see
for instance Theorem~5.12 in \cite{shi_2015}.
As $\mathcal{H}$ is obtained by attaching independently to any unmarked
vertex a $\mathrm{Poisson}(I_0 \bar{R}_0)$ distributed number of $(I)$
leaves, this shows that \eqref{eq:minBRW} also holds.
This completes Step 1.

\medskip
\noindent
\textbf{Step 2.} First let us note that the marks $(s_e)$, representing the infection intensities, are independent of the structure of the tree $\mathcal{T}$ and the lengths of its edges, so $\mathcal{T}$ almost surely satisfies the following property, for all $r\in \mathbb{N}$:
\begin{equation} \label{eq:sMarksNice}
	\{s_e, \, e\text{ edge of }\mathcal{T}\} \cap \{ c(t), \, t\in A_r^{\mathrm{pot}}\}=\emptyset,
\end{equation}
where $A_r^{\mathrm{pot}}$ is the (a.s.\ finite) set of lengths of all
paths from $(I)$ vertices at distance at most $r$ from the root to $(S)$
vertices. If a tree $G$ satisfies this property, then it is clear that
for any sequence $G^N\to G$, for $N$ large enough, the $r$-neighborhood
of the root in $G^N$ has the same structure as that of $G$ and in this
neighborhood, a path from an $(I)$ vertex to an $(S)$ vertex is open in
$G^N$ if and only if it is open in $G$.

Fix some tree $G$ satisfying \eqref{eq:sMarksNice}, which is either
finite or fulfills \eqref{eq:minBRW}, and a sequence $G^N$ converging to
$G$ in $\mathscr{H}$. We need to prove that
\begin{equation} \label{eq:goalContinuity}
    f(\mathcal{A}(G^N)) \indic{\sigma(G^N) < t} \to f(\mathcal{A}(G)) \indic{\sigma(G) < t}
\end{equation}
It is readily checked that \eqref{eq:goalContinuity} holds if $G$ is a
finite tree.
Suppose that $G$ is infinite. If $\sigma(G) < \infty$, let
$r$ be such that $M_r(G) > \sigma(G)$. In particular there is an active
path from an ($I$) vertex in $[G]_r$ to the root. The convergence
$[G^N]_r \to [G]_r$ entails that, for $N$ large enough, there is also an
active path from an ($I$) vertex in $[G^N]_r$ to its root whose length
converges to $\sigma(G)$, and that all other active paths from $G^N
\setminus [G^N]_r$ to the root have a length larger than $\sigma(G)$ and
thus cannot be the active geodesic. We are back to the case of a finite
tree where \eqref{eq:goalContinuity} is readily checked. Finally, if
$\sigma(G) = \infty$, fix $r$ such that $M_r(G) > t$. The convergence $[G^N]_r
\to [G]_r$ now entails that, for $N$ large enough, there is no active
path from an ($I$) vertex in $[G^N]_r$ to the root, so that $\sigma(G^N)
> t$. This shows that in all three cases \eqref{eq:goalContinuity} holds
and proves the first part of the result. We have also shown that $f$ is
continuous at every $G$ that fulfills \eqref{eq:minBRW} and has an active
geodesic, and that if $G^N \to G$ where $G$ fulfills \eqref{eq:minBRW}
and has no active geodesic, necessarily $\sigma(G^N) \to \infty$.

\medskip\noindent
We now prove the second part of the result and assume that $(c(t);\, t
\ge 0)$ converges to a limit $c_*$ as $t \to \infty$. For a pogm graph
$G$ with infection intensities on its edges, we denote by $G_s$ the pogm
graph obtained by removing from $G$ all edges $e$ with an infection
intensity $s_e > s$. We proceed again in two steps. In Step~3 we show
that if $\mathcal{H}_{c_*}$ is infinite, then $\mathcal{H}$ has a.s.\ a
geodesic. In Step~4 we consider a pogm graph $G$ such that $G_{c_*}$ is
finite, and prove that if $G^N \to G$ then $\sigma(G^N)
\indic{\sigma(G^N) < \infty}$ is bounded. Before moving to the proof of
these two claims, let us show that they are sufficient to prove our
result. If $G$ is such that \eqref{eq:minBRW} holds and has an active
geodesic, by Step~2 $f$ is continuous at $G$. Therefore, by Step~3, $f$
is continuous at a.e.\ realization $G$ of $\mathcal{H}$ such that
$G_{c_*}$ is infinite, or such that $G_{c_*}$ is finite but $G$ has an
active path to the root. It remains to consider the case where $G_{c_*}$
is finite and $G$ has no active path. If $G^N \to G$, by Step~4
$\sigma(G^N) \indic{\sigma(G^N)<\infty}$ is bounded and by Step~2
$\sigma(G^N) \to \infty$. Necessarily, $\sigma(G^N) = \infty$ and
$f(\mathcal{A}(G^N)) = f(\emptyset) = f(\mathcal{A}(G))$ for $N$ large
enough, showing that $f$ is continuous at $G$. Our only remaining task is
now to show the previous two claims.

\medskip\noindent
\textbf{Step 3.} We show that a.e.\ realization of $\mathcal{H}$ such
that $\mathcal{H}_{c_*}$ is infinite has an active geodesic. There are
two trivial cases that we easily exclude: if $c_* = 0$,
$\mathcal{H}_{c_*}$ cannot be infinite, and if $\sigma^\infty$ has
bounded support, our result is trivial because the epidemic stops a.s.\
after a finite time. Now, for any $s$, by standard properties of Poisson
random variables, the graph $\mathcal{H}_s$ is again a Galton--Watson
tree with Poisson distributed offspring and the graph $\mathcal{H}$ is
obtained by grafting independently on each ($S$) vertex of
$\mathcal{H}_s$:
\begin{itemize}
    \item a $\mathrm{Poisson}( S_0 R_0 (1-s) )$ distributed number of
        copies of $\mathcal{H}$; and
    \item a $\mathrm{Poisson}( I_0 \bar{R}_0 (1-s) )$ distributed number
        of ($I$) vertices.
\end{itemize}
Furthermore, each of these trees is connected to $\mathcal{H}_s$ through a unique
edge whose infection intensity is uniform on the interval $(s, 1)$. 
Note that when $s$ increases, so does the number of edges in $\mathcal{H}_s$, therefore we have $\{ \text{$\mathcal{H}_{c_*-\epsilon}$ is infinite} \}\subset \{ \text{$\mathcal{H}_{c_*}$ is infinite} \}$
for each $\epsilon>0$.
Furthermore, by studying the extinction probability of these Galton--Watson trees, we readily see that the probability $\P(\text{$\mathcal{H}_{s}$ is infinite})$ is a continuous function of $s$, which implies that
\[
	\P\Big(\{ \text{$\mathcal{H}_{c_*}$ is infinite} \}\setminus \bigcup_{\epsilon > 0} \{ \text{$\mathcal{H}_{c_*-\epsilon}$ is infinite} \} \Big) = 0.
\]
In other words, up to a null probability event, we have
\[
    \bigcup_{\epsilon > 0} \{ \text{$\mathcal{H}_{c_*-\epsilon}$ is
    infinite} \}
    = \{ \text{$\mathcal{H}_{c_*}$ is infinite} \},
\]
therefore without loss of generality, we can can consider a realization of $\mathcal{H}$ and an $\epsilon>0$ such that $\mathcal{H}_{c_*-\epsilon}$ is infinite.
Now let $T$ be such that $\abs{c(t) - c_*} < \epsilon / 2$
for $t \ge T$. On the event that $\mathcal{H}_{c_*-\epsilon}$ is
infinite, a.s.\ we can find a subtree $G$ of $\mathcal{H}$ grafted on
$\mathcal{H}_{c_*-\epsilon}$ such that $\sigma(G) > T$ and such that the
edge connecting $G$ to $\mathcal{H}_{c_*-\epsilon}$ has an infection
intensity in $(c_*-\epsilon, c_*-\epsilon / 2)$. (We have used that $\sigma^\infty$
has unbounded support.)
Let us write $e_1$ for this edge and let $\pi = (e_1,\dots,e_n)$ be the unique path in
$\mathcal{H}$ leading to the root and extending the active geodesic in
$G$. Since $\sigma(G) > T$, for each edge $e_k$ of $\pi$, we have $\abs{\tau_k(e_k)} > T$, so that
$c(\abs{\tau_k(e_k)}) > c_* - \epsilon / 2$ and the edge is open.
Therefore there exists a.s.\ an active path in $\mathcal{H}$ if
$\mathcal{H}_{c_*-\epsilon}$ is infinite.

\medskip
\noindent
\textbf{Step 4.} We now consider a pogm tree $G$ such that $G_{c_*}$ is
finite and $G$ has no active path. If $G^N \to G$, we need to show that 
for $N$ large enough $\sigma(G^N) = \infty$, that is, that $G^N$ has no
active path. Since $G_{c_*}$ is finite, there exists $r$ such that $[G_{c_*}]_r =
G_{c_*}$. By definition of $G_{c_*}$, all edges $e$ in $G$ pointing to a leaf
vertex of $G_{c_*}$ have an infection intensity
$s_e > c_*$. Since $[G^N]_{r+1} \to [G]_{r+1}$, the same holds true for $G^N$
for $N$ large enough. If $\pi^N$ is an active path leading from an $(I)$ vertex to the root in
$G^N$, by Step~2, it has to satisfy $\abs{\pi^N} \to \infty$. However,
any path $\pi^N = (e^N_1, \dots, e^N_n)$ ending at the root in $G^N$ with
$\abs{\pi^N} \to \infty$ includes an edge $e^N_k$ pointing to a leaf vertex
of $G_{c_*}$. On one hand, since $\abs{\pi^N}\to \infty$ we have
$c(\abs{\tau_k \pi^N}) \to c_*$. On the other hand,
since $[G^N]_{r+1} \to [G]_{r+1}$, we have $\liminf_{N \to \infty}
s_{e_k^N} > c_*$, so that this path is not active for large enough $N$,
proving that there exists no such active paths.
\end{proof}

\begin{proof}[Proof of Theorem~\ref{thm:hist-process}]
    For point (i), we have that for any bounded continuous functional
    $f$,
    \[
        \frac{1}{N} \sum_{x=1}^N f(\mathcal{A}^N_x)
        \indic{\sigma^N_x < t}
        \ \longrightarrow\
        \EE[ f(\mathcal{A}(\mathcal{T})) \indic{\sigma(\mathcal{T}) < t} ]
    \]
    by using Lemma~\ref{lem:double-continuous-mapping} with the mapping
    $\Phi \colon G \mapsto f(\mathcal{A}(G)) \indic{\sigma(G) < t}$ (which is
    a.e.\ continuous by Lemma~\ref{lem:continuityAncestral}) and the sequence
    $(\mathcal{G}^N)_N$ (which converges in the local weak sense by
    Proposition~\ref{prop:LWconvergence}). This in turn proves 
    that
    \[
        \frac{1}{N} \sum_{x \in [N]} \indic{\sigma^N_x < t} \delta_{\mathcal{A}^N_x} \to  
        \P(\sigma(\mathcal{T}) < t) \mathcal{L}(\mathcal{A}(\mathcal{T}) \mid \sigma(\mathcal{T}) < t)
    \]
    in probability for the weak topology (see for instance \cite[Theorem 4.11]{Kal17}).
    The proof of the first point is ended by noting that, with
    probability $I_0$ we have $\mathcal{A}(\mathcal{T}) = (-Z,
    \mathcal{X})$, whereas with probability $S_0$ we have $\mathcal{T} =
    \mathcal{H}$.

    The proof of point (ii) is the same as for point (i), but replacing 
    the map $G \mapsto f(\mathcal{A}(G)) \indic{\sigma(G) < t}$ by the
    map $G \mapsto f(\mathcal{A}(G))$ and using the second part of
    Lemma~\ref{lem:continuityAncestral} for the continuity.
\end{proof}

We can now prove \Cref{thm:ageStructure} using \Cref{thm:hist-process}.
Recall the notation
\[
    \mu_t^N = \sum_{x \in [N]} \indic{\sigma^N_x \le t} \delta_{(t-\sigma^N_x, X_x(t-\sigma^N_x))}
\]
for the empirical distribution of ages and compartments at time $t$, and the
notation
\begin{equation}\label{eq:ymu}
    Y_t^N(i) = \sum_{x \in [N]} \indic{\sigma^N_x \le t, X_x(t-\sigma^N_x) = i} 
    = \mu_t^N\big([0, \infty), \{i\}\big)
\end{equation}
for the number of individuals in compartment $i$ at time $t$. Note that
$\mu_t^N=\mu^N_t(\d a,\d i)$ can be written in terms of $H^N$ as follows
\begin{equation} \label{eq:mut-from-mu}
    \mu^N_t = N\int \indic{0\leq \sigma_0\leq t} 
    \delta_{(t-\sigma_0,X(t-\sigma_0))}\,H^N(\d \pi),
\end{equation}
where $\pi=(\sigma_\ell,\mathcal{X}_\ell)_{\ell=0}^k=(\sigma_\ell,(\mathcal{P}_\ell, X_\ell))_{\ell=0}^k$ denotes a generic ancestral path.

\begin{proof}[Proof of \Cref{thm:ageStructure}]
  By \Cref{thm:hist-process} and \eqref{eq:mut-from-mu}, we get for fixed $t$ and $i\in \mathcal{S}$,
  \[
    \mu^N_t(\d a,\{i\}) \longrightarrow S_0\P(t-\sigma^\infty \in \d a) \P(X(a) = i),
  \]
  where $\sigma^\infty$ is the length of the active geodesic in
  $\mathcal{H}$, and $X$ is a life-cycle process. Using \Cref{prop:dual},
  we can further identify $S_0\P(t-\sigma^\infty \in \d a)=n(t,a)\,\d a$,
  proving finite-dimensional convergence of $(\mu^N_t/N;\, t \ge 0)$.
  Because of the expressions of $Y^N_t(i)$ in terms of $\mu^N_t$ in
  \eqref{eq:ymu}, identification of their limit is trivial. All there is
  to check is tightness of the processes. 

  The tightness for $(\mu^N_t / N;\, t \ge 0)$ will follow from
  that of $(Y^N_t(i)/N;\, t \ge 0)$. Recall that the compartments of the
  life-cycle process enjoy an ``acyclic orientation'' property.  See
  statement before Theorem~\ref{thm:ageStructure}. Writing $i \preceq j$
  if $j$ can be accessed from $i$, the process
  \begin{equation} \label{eq:rearrangedCompartments}
      \sum_{j: \, i \preceq j} \frac{1}{N} Y_t^N(j),
  \end{equation}
  is nondecreasing in time. Since the finite-dimensional marginals of
  this nondecreasing process converge to the expression on the RHS of
  \eqref{eq:limitCompartments}, tightness follows provided we can show
  that this limit is continuous, see for instance Theorem~3.37,
  Chapter~VI of \cite{JS03}. For the continuity, for $t \ge 0$ and $h > 0$, 
  using H\"older inequality
  \begin{multline*}
      \abs*{\int_0^\infty n(t,a) p(a,i) \diff a - \int_0^\infty n(t+h,a) p(a,i)
      \diff a} \\
      \begin{aligned}
      &\le \int_0^\infty \abs{n(t,a) - n(t+h,a)} \diff a  \\
      &\le \int_0^h n(t,a) + n(t+h,a) \diff a  
      + \int_0^\infty \abs{n(t,a) - n(t, a + h)} \diff a.
      \end{aligned}
  \end{multline*}
  The first term can be made small using for instance
  \eqref{eq:absoluteContinuity}, whereas for the second term one can use
  that translation operators are continuous on $L^1$. We proceed in a
  similar way for $h < 0$. The tightness of $Y^N_t(i)/N$ follows by
  subtracting the processes in \eqref{eq:rearrangedCompartments} in an
  appropriate way. 

  Let us turn to the tightness of $(\mu^N_t / N;\, t \ge 0)$. We will use
  a tightness criterion for measure-valued processes in
  \cite{vaillancourt90}. This criterion is stated for measures on a
  compact space, but can be easily adapted to our setting by considering
  a compactification of $\RR_+ \times \mathcal{S}$ and noting that the
  limit of our sequence \eqref{eq:limitCompartments} has no mass at
  infinity. According to Lemma~3.2 in \cite{vaillancourt90}, it is
  sufficient to check tightness of the processes $(Y^N_t(\phi,i)/N;\, t
  \ge 0)$, for $\phi \colon \RR_+ \to \RR$ uniformly continuous and where
  \[
      \forall t \ge 0,\quad Y^N_t(\phi, i) = \int_0^\infty \phi(a)
      \mu^N_t(\diff a, \{i\}).
  \]
  For $s < t$, we have
  \begin{align*}
      |Y^N_s(\phi,i) &- Y^N_t(\phi, i)| \\
      &= 
      \abs*{\sum_{x \in [N]} \phi(s-\sigma_x) \indic{X_x(s-\sigma_x)=i, \sigma_x \ge s}
                           - \phi(t-\sigma_x) \indic{X_x(t-\sigma_x)=i, \sigma_x \ge t}} \\
      &\le \sum_{x \in [N]} \abs{\phi(s-\sigma_x)-\phi(t-\sigma_x)}
      + \abs{Y^N_s(i) - Y^N_t(i)}.
  \end{align*}
  Tightness follows from the uniform continuity of $\phi$ and from the
  tightness of the sequence $(Y^N_t(i)/N;\, t \ge 0)$.
  \end{proof}

\bibliographystyle{plain}
\bibliography{refs}

\end{document}